 \definecolor{skyblue}{rgb}{0.85,0.85,1}
\newcommand{\RR}{{\mathbb R}}
\newcommand{\LCB}{\left\{} 
\newcommand{\RCB}{\right\}}
\newcommand{\LC}{\left(}
\newcommand{\LV}{\left|} 
\newcommand{\RC}{\right)}
\newcommand{\RV}{\right|}
\newcommand{\LB}{\left[}
\newcommand{\RB}{\right]}
\newcommand{\LN}{\left\|}
\newcommand{\RN}{\right\|}
\newcommand{\curl}{\operatorname{curl}}
\newcommand{\dv}{\operatorname{div}}
\newcommand{\dist}{\operatorname{dist}}
\newcommand{\J}{\mathbb{J}}
\newcommand{\etae}{\eta_{\varepsilon}}
\newcommand{\logep}{{\LV \log \e \RV}}
\newcommand{\rt}{\tilde{\rho}}
\newcommand{\tQ}{\widetilde{Q}}
\newcommand{\R}{{\bf R}}
\newcommand{\C}{{\mathbb C}}
\newcommand{\Z}{{\mathbb Z}}
\newcommand{\Ee}{E_\e^{\eta_\e}}
\newcommand{\e}{\varepsilon}
\newcommand{\Wdot}{\dot{W}^{-1,1}}
\theoremstyle{plain}
\newtheorem{thm}{Theorem}
\newtheorem{prop}{Proposition}[section]
\newtheorem{lem}[prop]{Lemma}
\theoremstyle{definition}
\newtheorem{rem}{Remark}[section]
\numberwithin{equation}{section}
\newcommand{\thmref}[1]{Theorem~\ref{#1}}
\newcommand{\propref}[1]{Proposition~\ref{#1}}
\def\squarebox#1{\hbox to #1{\hfill\vbox to #1{\vfill}}}
\newcommand{\p}{\partial}
\title[IGP vortex motion]
{Gross-Pitaevskii vortex motion \\ with critically-scaled inhomogeneities}
\author[M. Kurzke]
{Matthias Kurzke}
\email{matthias.kurzke@nottingham.ac.uk}
\author[J.L. Marzuola]
{Jeremy L. Marzuola}
\email{marzuola@math.unc.edu}
\author[D. Spirn]
{Daniel Spirn}
\email{spirn@math.umn.edu}
\address{Mathematics Department, UNC \\
 Chapel Hill, NC, USA}
\address{Mathematics Department, University of Minnesota \\
 Minneapolis, MN, USA}
\address{School of Mathematical Sciences, University of Nottingham\\Nottingham NG7 2RD, United Kingdom}
\thanks{
 J.L.M. was support in part by NSF Applied Math Grant DMS-1312874 and NSF CAREER Grant DMS-1352353.
 D.S. was supported in part by NSF CAREER grant DMS-0955687 and NSF grant DMS-1516565.  The authors wish to thank Sylvia Serfaty and Rafe Mazzeo for helpful conversations throughout the development of this work and wish
 to thank the  anonymous referees for their comments that helped to improve the paper.}
\begin{document}

\begin{abstract}
We study the dynamics of vortices in an inhomogeneous Gross-Pitaevskii equation $i \p_t u = \Delta u + {1\over \e^2} (p_\e^2(x) - |u|^2)$.  For a unique scaling regime $|p_\e(x) - 1 | = O(\logep^{-1})$, it is shown that vortices can interact  both with the background perturbation and with each other.  Results for associated parabolic and elliptic problems are discussed.  

\end{abstract}
   
\maketitle 

\section{Introduction}

We consider the behavior of vortices in an inhomogeneous Gross-Pitaevskii (IGP) equation  
\begin{equation} \label{eq:igl}
\left\{
\begin{array} {rl}
i \p_t u  = \Delta u + {1\over\e^2} (p^2(x) - |u|^2) u  &  \hbox{ in } \Omega \\
\nu \cdot \nabla u  = 0 & \hbox{ on } \p \Omega
\end{array} \right.  ,
\end{equation}
where $\Omega$ is a bounded, simply connected domain in $\R^2$.  

An important feature of solutions of \eqref{eq:igl} is the potential appearance of vortices, points where $u(x,t):\mathbb{R}^2 \to \mathbb{C}$ has a zero with nontrivial topological degree. We concentrate on the case where there are $n$ vortices of degree $d_j \in \{-1,1\}$ at positions $a_j (t) \in \mathbb{R}^2$ for $j = 1, \dots, n$. We will show below that solutions take the form,
%
\begin{equation*}
u  (x,t) \approx \prod_{j=1}^n \rho_\e(x) \LC { x - a_j(t) \over | x - a_j(t) |} \RC^{d_j}.
\end{equation*}
Here ${ x - a_j(t) \over | x - a_j(t) |}$
is interpreted in the complex sense as 
$ { z - a_j(t) \over | z - a_j(t) |}$ for $z-a_j = x_1-a_{j,1} + i (x_2 - a_{j,2})$
with  $\rho_\e(x) \to 0$ as $|x-a_j| \to 0$ and $|u (x,t)|^2 \approx p^2(x)$ away from the $a_j(t)$'s.   In a  sense identifying the location and degrees of vortices is sufficient to describe the solution $u (x,t)$.   

The inhomogenous Gross-Pitaevskii equation \eqref{eq:igl} arises in several contexts, including the behavior of  superfluid ${}^4$He near a boundary, Bose-Einstein Condensation (BEC) under a very weak confinement potential, and nonlinear optics.   
In the first case it has been observed experimentally that anomalous behavior of   superfluid  ${}^4$He vortex filament motion indicates that  vortex tubes can become  pinned to sites on the material boundary, see \cite{photograph, schwarz1985three, tsubota1993pinning}.  Such pinning can be incorporated heuristically by the introduction 
of a potential $p(x)$ in the Gross-Pitaevskii equation, see \cite{PR} or complex Ginzburg-Landau equations, see \cite{gil1992dynamics}.    In the second case, IGP is a useful model  in a BEC in which the vortices 
interact both with each other and with the trap potential. 
In the third case, IGP is useful in modeling optical vortices.  See for instance \cite{arecchi1991vortices,Arecchi, PR}, which are related to motion of vortices with non-vanishing total charge in inhomogeneous potentials and confinement.

The dynamics of vortices in Gross-Pitaevksii equations have been the subject of much study in past few decades.  The connection between
the vortex motion in superfluids, where $p(x) \equiv 1$, and simplified ODE's was made by Fetter  in \cite{fetter}, and it was shown that  vortices  interact with each through a Coulomb potential.  
On the other hand in Bose-Einstein condensates with nontrivial trapping potentials, $p(x) = O(1)$, it was shown by Fetter-Svidzinsky in \cite{fetter2} that vortices interact solely with the background potential and are carried along level sets of the Thomas-Fermi profile.   

On the other hand it has been observed, both experimentally \cite{freilich2010real, neely2010observation} and numerically \cite{MKFCS}, that  vortices do interact with  other vortices and with the background potential.  In one particularly interesting direction, experimentalists have generated dipoles by dragging non radially symmetric BEC's through  laser obstacles in \cite{neely2010observation} and by the Kibble-Zurek mechanism in \cite{freilich2010real}.  These bound dipoles interact in nontrivial ways with each other and the background potentials.  Reduced ODE models for the dynamics of these vortex configurations can be found in \cite{MTKFCSFH, SKS, SMKS, torres2011dynamics} and are in good agreement with numerical simulations of the full equation \eqref{eq:igl} and with physical experiments, \cite{MTKFCSFH}.    The objective of our work is to place these widely-studied models on a rigorous footing.

We are  interested in the critical asymptotic regime where 
vortices  interact with both the background potential $p(x)$ and each other.  
 In the following, we will take $p (x) = p_\epsilon (x)$ with
\begin{equation} \label{e:pubd}
|1 - p_\e^2 (x) | \lesssim {1\over \logep} 
\end{equation}
and show that asymptotic scalings of $p_\e(x)$ outside of the range \eqref{e:pubd} induce  dynamics that are  dominated  by  only the background potential or by only vortex-vortex interactions.

\subsection{Background potential }

Following Lassoued-Mironescu \cite{LM} we write 
$$u(x,t) = \eta_\e(x) w(x,t),$$ 
where $\eta_\e$ is a minimizer of an energy \eqref{e:etaenergy} and consequently a nontrivial solution to the following elliptic PDE,
\begin{equation}  \label{eq:TFeqn}
\left\{ \begin{array}{rl}
0 = \Delta \eta_\e + {1\over \e^2} \eta_\e ( p_\e^2 - \eta_\e^2 ) & \hbox{ in } \Omega \\
\nu\cdot\nabla \eta_\e = 0 & \hbox{ on } \p \Omega \end{array} \right.,
\end{equation}
then \eqref{eq:igl} is equivalent to
 the Schr\"odinger equation
\begin{equation}  \label{e:weightedGP}
\left\{ \begin{array}{rl}
i \eta^2_\e \p_t w = \dv ( \eta^2_\e \nabla w) + { \eta^4_\e \over \e^2} ( 1 - |w|^2 ) w  & \hbox{ in } \Omega \\
\nu\cdot\nabla w = 0 & \hbox{ on } \p \Omega \end{array} \right. .
\end{equation}
The local and global well-posedness of \eqref{e:weightedGP} for fixed $\e$ can be established using a variation of the argument in Brezis-Gallouet \cite{BG}.

Associated to \eqref{e:weightedGP} is a weighted Hamiltonian,
\[
e_\e^{\eta_\e} (w) := {1\over 2} \eta_\e^2 \LV \nabla w \RV^2 + {\eta_\e^4 \over 4 \e^2} \LC 1 - |w|^2 \RC^2 ;
\]
consequently, $e_\e^1(w)$ is the standard gauge-less Ginzburg-Landau energy density as in \cite{BBH}.  
We set 
\[
E_\e^{\eta_\e} (w):= \int_{{ \Omega}} e_\e^{\eta_\e}(w) dx
\]
to be the total energy of the mapping $w$ associated to background potential $\eta_\e$.  We will be interested in an asymptotic range of $p_\e(x)$'s such that
the resulting energy feels both the background potential and the other vortices at the same order.    As will be elicited below, our regime corresponds to having 
the background $\eta_\e$ expanded 
in the following way: $\eta^2_\e (x) = 1 + {Q_\e(x) \over \logep}$ with $Q_\e(x) \to Q_0(x)$ as $\e \to 0$.



\subsection{Conservation Laws}

There are a set of  conservation laws for \eqref{e:weightedGP}, and to write them down efficiently, we introduce 
some notation.
For $b, c \in \mathbb{C}$ set $(\cdot, \cdot) = {1\over 2} \LC b \overline c + \overline{b} c \RC$ and let us define 
\begin{align*}
j(u) := ( i u , \nabla u) & \equiv \hbox{ supercurrent }  , \\
J(u)  := \det \nabla u = {1\over 2} \operatorname{curl} j(u)& \equiv \hbox{ Jacobian },
\end{align*}
then solutions satisfy the following differential identities:
\begin{align} 
 \label{eq:consmass}
{\eta_\e^2 \over 2} \p_t \LC |w|^2  - 1 \RC & 
= \dv \LC \eta^2_\e j(w) \RC,  \\  
\label{eq:consenergy}
\p_t e_\e^{\eta_\e}(w)  & = \dv ( \eta^2_\e ( \nabla w, \p_t w ) ) .
\end{align}
A third conservation law holds for the Jacobian, we which we now define, tensorially. If the matrix $\J$ is given by $\J_{ij}$ with $\J_{2 1} = - \J_{1 2} = 1$ and $\J_{11}= \J_{22} = 0$ then $\curl {F} = \J_{k \ell} \p_\ell F_k$
and if $J(w)  = {1\over2} \curl j(w)$, then following Jerrard-Smets \cite{JSm} we have after a lengthy calculation
\begin{equation}\label{modJacev}
  \p_t J (w) = \J_{\ell j} \p_j \LB {1\over \eta^2_\e} \p_k \LC \eta^2_\e \p_\ell w, \p_k w \RC \RB
 - \J_{\ell j} \p_j \LB {\eta^2_\e } \p_\ell {\LC 1 - |w|^2 \RC^2 \over 4 \e^2}  \RB .
\end{equation}
Multiplying by $\phi \in C_0^2(\Omega)$ yields a conservation law observed in \cite{JSm}
\begin{equation}\label{e:modJacev2}
\begin{split}
 \int_{\Omega} \phi \p_t J (w) dx & = \int_{\R^2} \J_{\ell j} \p_{j} \p_{k } \phi     \LC \p_k w, \p_\ell w \RC dx \\
 & \quad - \int_{\Omega} \J_{\ell j} \p_{j} \phi \LB  { \p_k \eta^2_\e \over \eta^2_\e} \LC \p_\ell w, \p_k w \RC  +  {\p_\ell \eta^2_\e } {\LC 1 - |w|^2 \RC^2 \over 4 \e^2}  \RB dx .
\end{split}
\end{equation}

The dynamics of a vortex can be inferred by choosing suitable test functions $\phi$ in \eqref{e:modJacev2} which elicit the vortex positions.  Since we expect
the Jacobian $J(w)$ to be roughly equal to a delta function, localized at the site of each vortex $\alpha$, we can choose $\phi = x_m \chi(x - \alpha)$, $m\in \{1,2\}$ for smooth cutoff function $\chi(x) = 1$ in $B_r$ and compactly supported in $B_{2r}$.  Integrating the left hand side over the support yields, formally,
$\pi \dot{\alpha}$.  On the right hand side, the first term has support in an annulus about each vortex, due to the structure of the test function, and since the  energy concentrates
outside of the support of the annulus,  one sees that it generates  
 an $O(1)$ vortex-vortex interaction term.    The second term on the right hand side has support inside of a ball about each vortex, and since the energy tensor is large in this region \cite{KSEquipartition}, it generates  an $O( \logep | \nabla \log \eta_\e|)$  vortex-potential interaction term.  Therefore, for a vortex to interact with both the background potential and  other vortices,
 it requires that $|\nabla \log \eta_\e | = O( \logep^{-1})$ for both terms to be of the same order.\footnote{The third term on the right hand side of \eqref{e:modJacev2}} is negligible. 
 
 In fact if $|\nabla \log \eta_\e | \ll \logep^{-1}$ then the vortex dynamics will be dominated by the interaction with the background potential, as in \cite{JSm}, whereas if $| \nabla \log \eta_\e | \gg \logep^{-1}$ then vortex dynamics will be driven by its interaction with the current generated by fellow vortices, see \cite{CJ}.  
 
 \subsection{Weak Topology}

In order to measure the distance of the vortices to the site of the expected location of the vortices given by the ODE \eqref{e:ODE}, we use 
the flat norm $\Wdot(\Omega)$.  We denote the norm $\Wdot$ unless  this topology is used on a subdomain, such as 
$$\| \mu \|_{\Wdot(B_r(\alpha))} = \sup_{\| \nabla \phi \|_{L^\infty(B_r(\alpha))} \leq 1}  \LCB \int_{B_r(\alpha)} \phi d \mu  \hbox{ such that } \phi \in W^{1,\infty}_0(B_r(\alpha)) \RCB. $$

We use a helpful estimate for concentrations in the flat norm, see Brezis-Coron-Lieb \cite{BCL}.  Define 
\begin{equation} \label{eq:rho}
r_\alpha :=  {1\over 8} \min \left\{  \min_{i \neq j} |\alpha_i - \alpha_j|, \min_{j} \operatorname{dist}(\alpha_j, \p \Omega) \right\} ,
\end{equation}
then the following holds:
\begin{lem}
Suppose $\alpha_k, \xi_k$ are points in $\Omega$.  If 
$$\LN \sum d_k \delta_{\alpha_k} - d_k \delta_{\xi_k} \RN_{\Wdot} \leq {1\over 4} r_\alpha,$$ 
then
\[
\LN \sum d_k \delta_{\alpha_k} - d_k \delta_{\xi_k} \RN_{\Wdot} = \sum_{k} | \alpha_k - \xi_k |.  
\]
\end{lem}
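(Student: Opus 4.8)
The plan is to establish the two inequalities $\|\mu\|_{\Wdot}\le \sum_k|\alpha_k-\xi_k|$ and $\|\mu\|_{\Wdot}\ge \sum_k|\alpha_k-\xi_k|$ for the signed measure $\mu:=\sum_k d_k(\delta_{\alpha_k}-\delta_{\xi_k})$, working throughout from the dual-Lipschitz description of $\Wdot$ (test functions $\phi\in W^{1,\infty}_0(\Omega)$ with $\|\nabla\phi\|_{L^\infty}\le 1$). The upper bound is immediate from subadditivity of the norm together with the single-dipole identity $\|d_k(\delta_{\alpha_k}-\delta_{\xi_k})\|_{\Wdot}=|\alpha_k-\xi_k|$, whose extremal test function is the linear map $x\mapsto \langle x,(\alpha_k-\xi_k)/|\alpha_k-\xi_k|\rangle$ (truncated to lie in $W^{1,\infty}_0$); since $d_k\in\{-1,1\}$ the sign is absorbed at no cost.

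The substance is the matching lower bound, for which I would construct a single admissible $\phi$ realizing every dipole at once. By the definition of $r_\alpha$, the balls $B_{2r_\alpha}(\alpha_k)$ are pairwise disjoint and compactly contained in $\Omega$ (centers are at least $8r_\alpha$ apart and at least $8r_\alpha$ from $\partial\Omega$). On each ball I take the tent $T_k(x)=(2r_\alpha-|x-\alpha_k|)_+$, which is $1$-Lipschitz, supported in $B_{2r_\alpha}(\alpha_k)$, and satisfies $T_k(\alpha_k)-T_k(\xi_k)=|\alpha_k-\xi_k|$ whenever $\xi_k$ lies in that ball. Then $\phi:=\sum_k d_k T_k$ is globally $1$-Lipschitz — the supports are disjoint, each $T_k$ vanishes on its boundary sphere, and $|\phi|\le 2r_\alpha$ forces the Lipschitz bound across the $\ge 4r_\alpha$ gaps — so $\phi\in W^{1,\infty}_0(\Omega)$. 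Testing $\mu$ against $\phi$ annihilates all cross terms and returns $\int\phi\,d\mu=\sum_k d_k^2|\alpha_k-\xi_k|=\sum_k|\alpha_k-\xi_k|$, giving the reverse inequality.

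This construction, and hence the whole lemma, rests on the localization claim that each $\xi_k$ indeed lies in $B_{2r_\alpha}(\alpha_k)$, and this is the one place the hypothesis $\|\mu\|_{\Wdot}\le \tfrac14 r_\alpha$ is genuinely used; I expect it to be the main obstacle. I would extract it by testing $\mu$ against $d_kT_k$ alone: since $B_{2r_\alpha}(\alpha_k)$ contains no other $\alpha_j$, one gets $2r_\alpha-T_k(\xi_k)-\sum_{j\ne k}d_kd_jT_k(\xi_j)\le\|\mu\|_{\Wdot}\le\tfrac14 r_\alpha$, forcing a large tent value and hence some $\xi$-mass deep inside $B_{2r_\alpha}(\alpha_k)$. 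The delicate issue is ruling out a ``relabeling'' in which a foreign $\xi_j$, rather than $\xi_k$, supplies this mass — equivalently, proving that the prescribed pairing is the Brezis--Coron--Lieb minimal connection \cite{BCL}. It is precisely the factor $1/8$ built into $r_\alpha$, played against the smallness $\tfrac14 r_\alpha$, that prevents mass from crossing between the well-separated balls and pins the correct $\xi_k$ into its own ball with $|\alpha_k-\xi_k|\le\tfrac14 r_\alpha<r_\alpha$. Once this rigidity is in hand the two displayed inequalities coincide and the proof concludes; I regard the remaining Lipschitz bookkeeping as routine.
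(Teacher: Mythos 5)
The paper does not actually prove this lemma---it simply cites Jerrard--Smets---so there is no internal argument to compare against; your tent-function construction is the standard one, and both halves of your argument (the trivial upper bound, and the lower bound via $\phi=\sum_k d_k T_k$ once each $\xi_k$ is known to lie in $B_{2r_\alpha}(\alpha_k)$) are fine. The genuine gap is exactly the step you defer: the claim that the hypothesis $\|\mu\|_{\Wdot}\le\frac14 r_\alpha$, with $\mu:=\sum_k d_k(\delta_{\alpha_k}-\delta_{\xi_k})$, ``pins the correct $\xi_k$ into its own ball.'' That step is not merely unproven; it is unprovable, because the lemma as literally stated is false. Take $n=2$, $d_1=d_2=1$, $\alpha_1,\alpha_2$ far apart and far from $\partial\Omega$, and set $\xi_1=\alpha_2$, $\xi_2=\alpha_1$. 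Then $\mu=0$, so the hypothesis holds with room to spare, while $\sum_k|\alpha_k-\xi_k|=2|\alpha_1-\alpha_2|>0$. No interplay between the factors $1/8$ and $1/4$ can rescue this: testing $\mu$ against $d_kT_k$ shows only that \emph{some} $\xi_j$ with $d_j=d_k$ sits deep inside $B_{2r_\alpha}(\alpha_k)$, never that it is $\xi_k$ itself, and the example shows it genuinely need not be.

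What is true, and what is implicitly meant, is either (i) the conclusion holds after a degree-preserving relabeling of the $\xi_k$ (the Brezis--Coron--Lieb minimal-connection statement), or (ii) the stated conclusion holds under the additional hypothesis that each $\xi_k$ already lies in, say, $B_{r_\alpha}(\alpha_k)$---which is how the lemma is actually used here, since the points $\xi_j$ produced by \eqref{e:localization} in Proposition~\ref{p:excessenergy} come pre-localized in $B_{r/2}(\alpha_j)$. Under that extra hypothesis your argument closes cleanly: disjointness of the balls kills every cross term in $\int\phi\,d\mu$, each tent reads off exactly $|\alpha_k-\xi_k|$, and the two inequalities coincide. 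So the repair is to add the localization hypothesis (or to permute the labels in the conclusion), not to try to extract the pairing from the flat-norm bound alone.
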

\begin{proof}
For a proof see \cite{JSm} for example.
\end{proof}
Consequently, the flat norm provides a good way to measure the location of singularities.  Since the 
Jacobian, $J(w)$, converges to a sum of weighted delta functions, we will use $\Wdot$ topology as a convenient metric to track vortex trajectories.

Associated to a set of locations $\alpha_j \in \Omega$ and degrees $d_j \in \{\pm 1\}$ we can define a \emph{canonical harmonic map, $w_*$ that describes the limiting harmonic map with prescribed 
singularities, 
\[
w_*(\alpha, d) \equiv e^{i\psi_*} \prod_{j=1}^n \LC { x - \alpha_j \over |x - \alpha_j|} \RC^{d_j}
\]
where harmonic $\psi_*$ is chosen so that $\p_\nu w_*(\alpha,d) = 0$ on $\p \Omega$.  We will use $w_*$ when there is no ambiguity.
}

\subsection{Energy Expansion}
\label{intro_ee}

Given a collection of $n$ vortices with centers $\alpha = \{\alpha_1, \dots, \alpha_n \}$, the Hamiltonian $E_\e^{\eta_\e}(w)$ can be expanded out to second order with $E_\e^{\eta_\e}(w)= H_\e(\alpha) + o_\e(1)$ where 
\[
 H_\e(\alpha) := \sum_{j=1}^n\LC \pi \logep + \pi Q_0(\alpha_j) + \gamma_0 \RC + W(\alpha,d) ,
\]
which will be established in the sense of $\Gamma$-convergence in Proposition~\ref{prop:Gammac2}.
Here $n$ will be used throughout to represent the number of vortices, $Q_0(x)$ is the limiting rescaled background perturbation, and $W(\alpha, d)$ is the renormalized energy that arises in the work of Bethuel-Brezis-H\'elein \cite{BBH} \footnote{The authors of \cite{BBH} study the case of  Dirichlet boundary conditions.  See 
the discussion in the appendix of \cite{SpirnGP} for the derivation of $W(\alpha,d)$ with Neumann boundary conditions.}, given by
\begin{align*}
W(\alpha, d) & = \lim_{r \to 0} \LB \min_{\substack{v \in H^1 (\Omega_r(\alpha);S^1) \\  \deg(v;\p B_r(\alpha_j)) =d_j} }
\int_{\Omega_r(\alpha)} {1\over2 }  \LV \nabla v\RV^2 dx  - \sum_{j=1}^n \pi  \log {1\over r} \RB   \\
& = - \pi \sum_{j\neq k} d_j d_k \log |\alpha_j - \alpha_k| + \hbox{ boundary terms},
\end{align*}
 where $\Omega_r (\alpha) \equiv \Omega \backslash \cup_{j} B_r(\alpha_j)$.
In order to express the boundary terms, we define $G(\cdot,\alpha,d) = \sum_{j=1}^n G(\cdot, \alpha_j, d_j)$ where
\[
\Delta G(\cdot, \alpha_j , d_j) = 2 \pi d_j \delta_{\alpha_j} \hbox{ in } \Omega \hbox{ with } G = 0 \hbox{ on } \p \Omega.
\]
If we set $F(x, \alpha_j) = G(x,\alpha_j, d_j) -  \log |x - \alpha_j|$, then we can fully express $W(\alpha, d)$ as 
\begin{equation}\label{eq:NeumannW}
W(\alpha, d) = -  \pi \sum_{j \neq k} d_j d_k \log| \alpha_j - \alpha_k| +  \pi \sum_{j, k} d_j F(\alpha_j, \alpha_k),
\end{equation}
see \cite{BBH} and the appendix of \cite{SpirnGP}.


\subsection{Discussion}

Rigorous results on vortex dynamics for Gross-Pitaevskii were established when $p(x) \equiv 1$  by Colliander-Jerrard \cite{CJ} and also Lin-Xin \cite{LX} which showed that vortices satisfy the Kirchoff-Onsager ODE for Euler point vortices.   When $p(x) = O(1)$ there is a recent result of Jerrard-Smets \cite{JSm} that showed that vortices travel along level sets of the Thomas-Fermi profile.  
In the parabolic setting there were rigorous results by Lin  \cite{LinParabolic} and Jerrard-Soner \cite{JSDynamics} when $p(x) \equiv 1$ and by Jian-Song \cite{jiansong} when $p = O(1)$. Mixed dynamics with $p=O(1)$ and further forcing terms were discussed by Serfaty-Tice \cite{ST}.
 A variational proof of the parabolic dynamics for $p \equiv 1$ was given by Sandier-Serfaty \cite{SSGamma}.



\subsection{Results}

We wish to describe the dynamics of vortices in \eqref{e:weightedGP}.  If the vortices are located at positions $\alpha_j(t)$ with degree $d_j \in \{-1,1\}$, then the vortices move via the ODE,
 \begin{equation} \label{e:ODE}
\pi d_k \dot{\alpha}_{k} (t) = \nabla^\perp_{\alpha_{k}} H_0 (\alpha, Q_0),
\end{equation}
where $\nabla^\perp = (\p_2, -\p_1)^T$ and
\begin{equation} \label{e:Hdef}
 H_0 (\alpha, Q_0) = W(\alpha, d) + \pi \sum_{j=1}^n Q_0(\alpha_j) .
\end{equation}

Given the ODE generated by \eqref{e:ODE}, we can define the collision time, 
\[
T_{col} := \sup_{t \geq 0} \{ r_{\alpha (s)} > 0 \hbox{ for all } 0 \leq s \leq t \} .
\]
Then for any $0 \leq T < T_{col}$ we can define 
\begin{equation} \label{eq:rho.min}
r_{min} := \min_{t \in [0,T] } r_{ \alpha  (t)}
\end{equation}
which is the minimum vortex-vortex or vortex-boundary distance until time $T$.  Clearly $r_{min}>0$.

Our main result is the following that captures both vortex-vortex and vortex-potential interactions.   An important hypothesis will be 
that the initial data has control of the excess energy 
\begin{equation} \label{e:Ddef}
D_\e(w,\alpha) \equiv E_\e^{\etae}(w) - H_\e(\alpha, Q_0).
\end{equation}
We will shorten the notation to $D_\e$ when the $w$ and $\alpha$ are readily apparent.

\medskip

\begin{thm} \label{t:vortexdynamics}
Let $p_\e^2 = 1 +{\rho_\e(x) \over \logep}$ where the $\rho_\e$ satisfies hypotheses of Proposition~\ref{p:ellconv} with $k \geq 4$.
Let $\{\alpha_{j}^0, d_j\}$ is a configuration of vortices such that $ r_{\alpha_0} > 0 $, $d_j \in \{-1, 1\}$ and suppose $w_\e^0, 0 < \e <1$ satisfies the well-preparedness hypotheses 
\begin{equation} \label{e:wellpreparedness}
\| J(w_\e^0) - \pi \sum d_j \delta_{\alpha_j^0} \|_{\dot{W}^{-1,1}}= o_\e(1) \hbox{ and } D_\e(w_\e^0, \alpha^0) = o_\e(1) .
\end{equation}
If $w_\e(t)$ is a solution to \eqref{eq:igl} with initial data $w_\e^0$ 
then there exists a time $T > 0$ 
such that for all $t \in [0,T]$, 
\[
\LN J(w_{\e}(\cdot , t ) ) - \pi \sum d_j \delta_{\alpha_j(t)} \RN_{\dot{W}^{-1,1}(\Omega)} \to 0 
\]
 as $\e\to 0$, where the $\alpha_j(t)$ are defined by
\eqref{e:ODE}, and $T$ is independent of $\e$.

\end{thm}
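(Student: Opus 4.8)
The plan is to treat the excess energy $D_\e$ defined in \eqref{e:Ddef} as a Lyapunov-type quantity that simultaneously localizes the vortices and propagates the well-preparedness \eqref{e:wellpreparedness}, and to extract the velocity law from the weak Jacobian evolution \eqref{e:modJacev2}. First I would record exact conservation of energy: integrating \eqref{eq:consenergy} over $\Omega$, the Neumann condition $\nu\cdot\nabla w=0$ annihilates the divergence term, so $E_\e^{\etae}(w_\e(t)) = E_\e^{\etae}(w_\e^0)$ for as long as the solution exists. Since \eqref{e:ODE} is the Hamiltonian flow of $H_0(\cdot,Q_0)$, the quantity $H_0(\alpha(t),Q_0)$, and hence $H_\e(\alpha(t))$, is constant along the ODE. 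Combined with the initial control $D_\e(w_\e^0,\alpha^0)=o_\e(1)$ this gives, unconditionally, $E_\e^{\etae}(w_\e(t)) = H_\e(\alpha^0)+o_\e(1) = H_\e(\alpha(t))+o_\e(1)$.

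Second, at each fixed time I would extract the true vortex locations $\xi^\e(t)$ from $w_\e(t)$ by a vortex-ball / Jacobian-concentration construction, and invoke the $\Gamma$-convergence lower bound of \propref{prop:Gammac2} to obtain $E_\e^{\etae}(w_\e(t)) \geq H_\e(\xi^\e(t))-o_\e(1)$ on any interval where the $\xi^\e_j(t)$ remain separated. Feeding in the conservation identity forces $H_\e(\xi^\e(t)) \leq H_\e(\alpha^0)+o_\e(1)$, which rules out the creation of spurious vortices and, once the Gronwall step below shows $\xi^\e(t)\approx\alpha(t)$, saturates the lower bound so that $D_\e(w_\e(t),\xi^\e(t))=o_\e(1)$. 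This saturated (propagated) well-preparedness also pins $J(w_\e(t))$ to $\pi\sum d_j\delta_{\xi^\e_j(t)}$ in $\dot W^{-1,1}$ with $o_\e(1)$ error, and is exactly what makes the error terms in the velocity law small.

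Third, and this is the analytic heart, I would derive the velocity law by testing \eqref{e:modJacev2} against $\phi=x_m\chi(x-\xi^\e_k)$, $m\in\{1,2\}$, with $\chi$ the cutoff from the discussion following \eqref{e:modJacev2}, frozen at the evaluation time. The left-hand side produces $\pi d_k\dot\xi^\e_{k,m}+o_\e(1)$. On the right-hand side, the first term is supported in the annulus where $\p_j\p_k\phi\neq0$, where the stress tensor $(\p_k w,\p_\ell w)$ is controlled away from the core; a Pohozaev-type identification of the canonical harmonic map $w_*$ yields the Coulombic contribution $\nabla^\perp_{\xi_k}W(\xi^\e,d)$. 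The second term carries the vortex–potential interaction: using the expansion $\eta_\e^2=1+Q_\e/\logep$ from \propref{p:ellconv} one has $\p_k\log\eta_\e^2\approx\logep^{-1}\,\p_k Q_0$, which multiplies an energy density of size $\logep$ near the core, yielding $\pi\nabla^\perp Q_0(\xi^\e_k)$ in the limit; the critical scaling \eqref{e:pubd} is precisely what balances these two orders. The third term is negligible. Altogether $\pi d_k\dot\xi^\e_k = \nabla^\perp_{\xi_k}H_0(\xi^\e,Q_0)+o_\e(1)$, matching \eqref{e:Hdef}.

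Finally, since $\nabla^\perp H_0(\cdot,Q_0)$ is Lipschitz on the compact separated set $\{r_\alpha\geq r_{min}/2\}$, a continuation argument runs: as long as $\xi^\e(t)$ stays in this set, the perturbed system for $\xi^\e$ and the exact ODE \eqref{e:ODE} for $\alpha$, which agree at $t=0$ up to $o_\e(1)$ by \eqref{e:wellpreparedness} and the flat-norm Lemma, stay $o_\e(1)$-close by Gronwall; this in turn maintains the separation a posteriori on $[0,T]$ for any fixed $T<T_{col}$, closing the bootstrap. Combining $|\xi^\e(t)-\alpha(t)|=o_\e(1)$ with the concentration of $J(w_\e(t))$ at $\xi^\e(t)$ and the flat-norm Lemma yields the claimed $\dot W^{-1,1}$ convergence uniformly on $[0,T]$. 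I expect the main obstacle to be the third step, specifically making the vortex–potential term rigorous: one must control the product of the $O(\logep)$ energy density near each core against the $O(\logep^{-1})$ gradient $\nabla\log\eta_\e^2$ uniformly in $t$, which forces the excess-energy bound $D_\e=o_\e(1)$ to be upgraded into quantitative closeness of $w_\e$ to $w_*$ near each core (equipartition of the stress-energy tensor, cf. \cite{KSEquipartition}). Keeping every error $o_\e(1)$ and uniform in time, while using the velocity law itself to justify the propagation of well-preparedness invoked in the second step, is the delicate coupled estimate on which the argument turns.
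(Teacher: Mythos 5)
Your architecture is essentially the paper's: conservation of $E_\e^{\etae}$ together with conservation of $H_\e$ along the Hamiltonian flow to propagate $D_\e(w_\e(t),\alpha(t))=o_\e(1)$, extraction of vortex paths from the Jacobian, the test function $x_m\chi$ in \eqref{e:modJacev2}, equipartition of the stress-energy tensor for the vortex--potential term, and a Gronwall comparison with \eqref{e:ODE}. But there is one genuine gap, in your third step, and it is exactly the point on which the paper's proof turns.

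When you expand the stress tensor $\LC \p_q w_\e,\p_\ell w_\e\RC$ in the annulus and try to replace it by the stress tensor of $w_*(\xi)$, the difference contains, besides terms quadratic in $\nabla|w_\e|$ and in $j(w_\e)/|w_\e|-j(w_*)$, cross terms that are \emph{linear} in $j(w_\e)/|w_\e|-j(w_*)$ multiplied by $j(w_*)$. The quadratic terms are indeed controlled by the excess energy \eqref{e:excessenergy}, i.e. by $D_\e(w_\e,\xi)\le C\mu(t)+o_\e(1)$ where $\mu=\sum_j|\alpha_j-\xi_j|$, which is Gronwall-compatible. The cross terms, however, are only bounded via Cauchy--Schwarz by $O(\sqrt{D_\e(w_\e,\xi)})=O(\sqrt{\mu}+o_\e(1))$ pointwise in time, and the differential inequality $\mu(b)-\mu(a)\le C\int_a^b\sqrt{\mu}\,dt$ with $\mu(0)=0$ does \emph{not} force $\mu\equiv0$. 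Your ``Pohozaev-type identification'' does not address this. The paper's resolution is Lemma~\ref{l:weakL2conv}: a Hodge decomposition of $j(w_\e)-j(w_*(\xi))$, in which the divergence part is controlled through the equation (the mass conservation law \eqref{eq:consmass}) and the curl part through the Jacobian convergence, shows that these linear terms vanish as $\e\to0$ \emph{after integration in time} over $(a,b)$ --- and only after time integration, since the underlying convergence of the supercurrent is weak. For the same reason the pointwise-in-time velocity law $\pi d_k\dot\xi^\e_k=\nabla^\perp_{\xi_k}H_0(\xi^\e,Q_0)+o_\e(1)$ you assert is not available as stated; the paper only ever works with time-integrated increments $\xi(b)-\xi(a)$, after first establishing Lipschitz continuity of the paths (Proposition~\ref{p:lipschitzpaths}) with the very same test function. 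Once you insert the supercurrent compactness lemma and run the comparison in integrated form, the rest of your outline closes as in the paper.
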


\begin{rem}
An important function satisfying the conditions of Proposition \ref{p:ellconv} below is simply $\rho_\e = \rho_0 \in H^{k+1} (\Omega)$ is any fixed function such that $\nabla \rho_0 $ has compact support inside $\Omega$.
\end{rem}



One important example that arises from the Theorem~\ref{t:vortexdynamics} are dipoles that are scattered by inhomogeneities, see Remark~\ref{r:extensions} below and Section~\ref{s:numerics} for some illustrative numerical simulations.

Under slightly less regularity assumptions, we have a similar result for the gradient flow
\begin{equation}\label{eq:introGF}
\frac1\logep \p_t u_\e = \Delta u_\e + \frac1{\e^2} (p_\e^2-|u_\e|^2) u_\e
\end{equation}
with the limit equation
 \begin{equation} \label{e:introGFODE}
\pi \dot{\alpha}_{k} (t) = -\nabla_{\alpha_{k}} H_0 (\alpha, Q_0),
\end{equation}
\begin{thm}\label{t:introGF}
Let $p_\e^2 = 1 +{\rho_\e(x) \over \logep}$ where the $\rho_\e$ satisfies hypotheses of Proposition~\ref{p:ellconv} with $k \geq 3$.
If $\{\alpha_{j}^0, d_j\}$ be a configuration of vortices such that $d_j \in \{-1, 1\}$ and if $u_\e^0, 0 < \e <1$ satisfies \eqref{eq:introGF} 
and $w_\e=\frac{u_\e}{\eta_\e}$ satisfies the well-preparedness hypotheses 
\begin{equation} \label{e:wellpreparedness_gradflow}
\| J(w_\e^0) - \pi \sum d_j \delta_{\alpha_j^0} \|_{\dot{W}^{-1,1}}= o_\e(1) \hbox{ and } D_\e(w_\e^0, \alpha^0) = o_\e(1) \hbox{ and } r_{\alpha_0} > 0,
\end{equation}
then there exists a time $T > 0$ 
such that for all $t \in [0,T]$, 
\[
\LN J(w_{\e}(\cdot , t ) ) - \pi \sum d_j \delta_{\alpha_j(t)} \RN_{\dot{W}^{-1,1}(\Omega)} \to 0 
\]
 as $\e\to 0$, where the $\alpha_j(t)$ are defined by
\eqref{e:introGFODE}.
\end{thm}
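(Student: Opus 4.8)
The plan is to obtain \eqref{e:introGFODE} as a $\Gamma$-convergence of gradient flows in the spirit of Sandier-Serfaty \cite{SSGamma}, adapted to the weight $\etae$. Under the Lassoued-Mironescu splitting $u_\e = \etae w_\e$, the flow \eqref{eq:introGF} becomes the weighted parabolic equation
\begin{equation*}
\frac{1}{\logep}\,\etae^2\,\p_t w_\e = \dv\!\LC \etae^2 \nabla w_\e \RC + \frac{\etae^4}{\e^2}\LC 1 - |w_\e|^2 \RC w_\e ,
\end{equation*}
which is precisely the gradient flow of $\Ee(w)$ in the metric $L^2(\etae^2\,dx)$. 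This yields the dissipation identity
\begin{equation*}
\Ee(w_\e(T)) + \frac{1}{\logep}\int_0^T\!\!\int_{\Omega}\etae^2 |\p_t w_\e|^2 \, dx\, dt = \Ee(w_\e(0)),
\end{equation*}
so the energy is nonincreasing in $t$ and the well-preparedness \eqref{e:wellpreparedness_gradflow} forces the uniform bound $\Ee(w_\e(t)) \le \pi n \logep + C$ for all $t \ge 0$.

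First I would feed this uniform-in-time energy bound into the Jacobian compactness machinery already set up for \thmref{t:vortexdynamics}: for each $t$ the bound localizes $J(w_\e(\cdot,t))$ to a sum $\pi\sum_j d_j \delta_{\alpha_j(t)}$ in $\Wdot$, and the dissipation control on $\p_t w_\e$ makes the curves $\alpha_j(t)$ absolutely continuous. I would fix a short interval $[0,T]$, with $T$ below the collision time of the limiting ODE, on which the vortices remain separated so that $r_{\alpha(t)} \ge r_{min} > 0$; this fixes a $T$ independent of $\e$. On this interval the lower-bound half of \propref{prop:Gammac2} identifies the limiting energy exactly, giving the $\Gamma$-\emph{liminf}
\begin{equation*}
\liminf_{\e \to 0} \LC \Ee(w_\e(t)) - \pi n \logep - n\gamma_0 \RC \ge H_0(\alpha(t), Q_0),
\end{equation*}
with $H_0$ as in \eqref{e:Hdef}; at $t=0$ the second part of \eqref{e:wellpreparedness_gradflow}, i.e. $D_\e(w_\e^0,\alpha^0)=o_\e(1)$, upgrades this to an equality.

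The dynamics then come from splitting the dissipation symmetrically. Writing the flow as $\frac{1}{\logep}\p_t w_\e = -G_\e$, where $G_\e$ is the $L^2(\etae^2\,dx)$-gradient of $\Ee$, one has $\frac{1}{\logep}\int \etae^2|\p_t w_\e|^2 = \logep \int \etae^2 |G_\e|^2$, so the dissipation rate equals $\tfrac12 \frac{1}{\logep}\int \etae^2|\p_t w_\e|^2 + \tfrac12 \logep \int \etae^2|G_\e|^2$. For the velocity half I would invoke the Sandier-Serfaty product estimate, which (since $\etae \to 1$ uniformly) gives $\liminf_\e \frac{1}{\logep}\int_0^T\!\int \etae^2 |\p_t w_\e|^2 \ge \int_0^T \pi\sum_j |\dot\alpha_j|^2\,dt$. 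For the force half I would extract the generalized gradient from the Jacobian evolution \eqref{e:modJacev2}: testing against $\phi = x_m \chi(x-\alpha_k)$ isolates the force on the $k$-th vortex, the annular term producing the vortex-vortex part $\nabla_{\alpha_k} W(\alpha,d)$ and the weight term producing the potential part $\pi \nabla Q_0(\alpha_k)$, so that $\liminf_\e \logep\int_0^T\!\int \etae^2 |G_\e|^2 \ge \int_0^T \tfrac{1}{\pi}\sum_j |\nabla_{\alpha_j} H_0|^2\,dt$. Passing to the limit in the integrated dissipation identity and comparing with the chain rule $H_0(\alpha(T)) - H_0(\alpha(0)) = \int_0^T \nabla H_0 \cdot \dot\alpha\,dt$, every inequality must be an equality; the equality case of Young's inequality then forces $\pi \dot\alpha_k = -\nabla_{\alpha_k} H_0(\alpha,Q_0)$, which is \eqref{e:introGFODE}.

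The main obstacle is the force half in the presence of the inhomogeneity. The weight-dependent term of \eqref{e:modJacev2} scales like $\logep\, \nabla \log \etae$, which is $O(1)$ only in the critical regime, and one must show it converges to precisely $\pi \nabla Q_0(\alpha_k)$ rather than blowing up or vanishing. This is exactly where $\etae^2 = 1 + Q_\e/\logep$ and the regularity $k \ge 3$ of \propref{p:ellconv} enter, guaranteeing $\logep\,\nabla\log\etae \to \tfrac12\nabla Q_0$ strongly enough to pass to the limit inside the localized energy tensor, whose concentration near the core is controlled as in \cite{KSEquipartition}. A secondary point is that the product estimate and the $\Gamma$-liminf must be run with the weight $\etae^2$ rather than Lebesgue measure; since $\etae \to 1$ uniformly these are perturbations of the homogeneous case, but the uniformity of the convergence (again from \propref{p:ellconv}) is what keeps the error terms $o_\e(1)$ on $[0,T]$.
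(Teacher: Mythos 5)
Your overall architecture --- the Lassoued--Mironescu splitting, the dissipation identity, the uniform energy bound, Jacobian compactness along the flow, the $\Gamma$-liminf from Proposition~\ref{prop:Gammac2}, the product estimate for the velocity half, and closing via equality in Young's inequality --- is exactly the Sandier--Serfaty scheme the paper follows. The gap is in your ``force half.'' To close the Young's-inequality argument you need the slope lower bound $\liminf_\e \logep\int_\Omega\eta_\e^2|G_\e|^2\,dx \ge \frac1\pi\sum_j|\nabla_{\alpha_j}H_0|^2$, and your proposed derivation of it from the Jacobian evolution \eqref{e:modJacev2} does not work as stated. First, \eqref{e:modJacev2} is the identity for the Schr\"odinger flow \eqref{e:weightedGP}; for the gradient flow one has $\p_t J(w_\e)=\curl(i\p_t w_\e,\nabla w_\e)$, and substituting the equation turns the test-function computation into an expression for the limiting \emph{velocity} $\pi\dot\alpha_k$ in terms of the stress tensor, not a lower bound on the $L^2$ norm of the gradient $G_\e$ --- these are different objects, and conflating them skips precisely the step the $\Gamma$-convergence-of-gradient-flows method is designed to supply. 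Second, extracting $\pi\nabla Q_0(\alpha_k)$ from the weight term via the localized stress-energy tensor requires the equipartition estimate \eqref{e:equipart} and \cite{KSEquipartition}, which is exactly the $C^2$-level information on $Q_\e$ that forces $k\ge4$ in the Schr\"odinger theorem; the gradient-flow statement assumes only $k\ge3$, and the paper's stated motivation for the variational route is that it avoids this machinery.

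The paper obtains the slope bound instead by the Sandier--Serfaty ``construction'': for any direction $V$ one builds a deformation $v_\e(\chi_t(x),t)=w_\e(x)e^{i\psi_t(x)}$ along a one-parameter family of diffeomorphisms translating the vortex balls, verifies $\|\p_t v_\e(0)\|^2_{\mathcal X_\e}=\|V\|^2_{\mathcal Y}+o(1)$, and shows $\frac{d}{dt}\big\vert_{t=0}\Ee(v_\e(t))\to \frac{d}{dt}\big\vert_{t=0}F(b(t))$ up to an error controlled by $D_\e$; duality between directional derivatives and the gradient then yields the slope bound. The only genuinely new terms relative to \cite{SSGamma} are $\mathcal A_1=\int_\Omega\frac12\frac{d}{dt}\big\vert_{t=0}(\eta_\e\circ\chi_t)^2|\nabla w_\e|^2\,dx\to\pi\sum V_i\cdot\nabla Q_0(\alpha_i)$, which needs only the $C^1$ convergence of $Q_\e$ together with concentration of the logarithmic energy in the balls, and $\mathcal A_2\to0$. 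You would also need the auxiliary fact that $\dv j(w_\e)\to0$ in $L^2$ under a bounded-slope assumption (proved in the paper from \eqref{e:potbound}, \eqref{e:Lpbound} and $|\nabla(\eta_\e^2)|\to0$) before invoking the abstract convergence theorem of \cite{SSGamma}; your sketch omits this entirely.
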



\begin{rem}
\label{r:extensions}
There are straightforward adaptations of Theorem~\ref{t:vortexdynamics} to other contexts, including:
\begin{itemize}
\item The Gross-Pitaevskii equation \eqref{eq:igl} on  $\Omega \equiv \R^2$.  This can be achieved by combining the 
methods here with the arguments in \cite{BJS, JSp3}.   In this case vortices move according to the same ODE \eqref{e:ODE}; however, 
the renormalized energy $W(a,d)$ is  the classical Coulomb potential,  
\[
W(\alpha, d) = -  \pi \sum_{j \neq k} d_j d_k \log|\alpha_j - \alpha_k|.
\]

\item Mixed Ginzburg-Landau equations of the form 
\[
\LC {1\over \logep} \p_t + i \p_t  \RC u_\e = \Delta u_\e + {1\over \e^2} \LC p^2_\e(x) - |u_\e|^2 \RC u_\e
\]
as studied in \cite{KMMS,Miot} which results in a modified ODE 
\[
 \pi \dot{\alpha}_k (t) - \pi d_k (e_3 \times \dot{\alpha}_k (t) ) = - \nabla_{\alpha_k} H_0(\alpha,Q_0). 
\]
\end{itemize}
\end{rem}

In Section~\ref{sec:gamma} we present  first and second order Gamma-convergence results for the energy $E_\e^{\eta_\e}(w_\e)$, see Proposition~\ref{prop:Gammac1} and Proposition~\ref{prop:Gammac2}, respectively.  These  results are similar to those found in \cite{AP}, albeit ported to the case with critical inhomogeneities.

\subsection{Properties of $\eta_\e = \eta_\e(p_\e)$}

We collect here some information required on the behavior of $\eta_\e$, as related to the background fluctuations $p_\e$.  The discussion of the proof will be given in Appendix \ref{s:etacontrol} and is based off of standard elliptic theory estimates, so we simply state the results that we require for the vortex dynamics here.  

We write
\begin{equation}
\label{Qe-def}
p_\e^2 = 1+\frac{\rho_\e}{|\log\e|}, \ \ \eta_\e^2 =1+\frac{Q_\e}{|\log\e|}.
\end{equation}
We will assume that $\rho_\e  \to \rho_0(x)$ in $H^k (\Omega)$ with $k$  made precise below.  Using results such as \cite{dS2} for the Dirichlet problem, we expect that $p_\e$ and $\eta_\e$ should be quite close.  We make precise the nature in which that is true in the following Proposition, which will be proved in the Appendix.

We remark here that the elliptic analysis required for the convergence that will implement in the Appendix is somewhat non-trivial, as the overall ellipticity of the underlying nonlinear elliptic problem is going to $0$ in the limit as $\varepsilon \to 0$.  Hence, estimates must be done with some care, especially to understand regularity up to the boundary of our domain.  

\begin{prop}
\label{p:ellconv}
Let $\rho_\e$ be such that $\rho_\e \to \rho_0 (x)$ in $H^k (\Omega) $, $\nabla \rho_\e$ is compactly supported strictly on the interior of $\Omega$ for all $\e$ and  $\e^2 \rho_\e \to 0$ in $H^{k+1}$.  Then, for $Q_\e$ as defined in \eqref{eq:TFeqn}, \eqref{Qe-def} satisfying Neumann boundary conditions on $\p \Omega$, we have $Q_\e \to \rho_0$  in $H^k (\Omega)$.  In particular, we have $Q_\e \to \rho_0$ in $C^{2,\delta} (\overline{\Omega})$ with $\delta > 0$ for integer $k \geq 4$ as required for the Schr\"odinger dynamics below and  $Q_\e \to \rho_0$ in $C^{1,\delta} (\overline{\Omega}) $, $\delta > 0$ for integer $k \geq 3$ as required for the gradient flow dynamics.\footnote{Note, by interpolation arguments, we can actually run the convergence argument is $H^{3+} (\Omega)$, where by $\cdot +$ we mean $\cdot + \nu$ for any $\nu > 0$.  We chose to here work with integer Sobolev spaces} for convenience.  In fact, the sharp estimate would include Schauder theory estimates directly on H\"older norms, but that would require uniform convergence in $C^1$, which does not seem obvious given that the $\nabla Q$ term appears to vanish in the limit preventing the proof of uniform bounds of higher regularity.
\end{prop}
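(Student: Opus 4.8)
The plan is to convert the Neumann problem \eqref{eq:TFeqn} into a genuinely nonlinear, singularly perturbed scalar equation for $Q_\e$ and then run a maximum-principle/energy-bootstrap scheme. Writing $\eta_\e^2 = 1 + Q_\e/\logep$ and $p_\e^2 = 1 + \rho_\e/\logep$, a direct computation (substitute $\eta_\e = (1+Q_\e/\logep)^{1/2}$ into \eqref{eq:TFeqn} and multiply through by $2\eta_\e$) turns \eqref{eq:TFeqn} into
\[
-\e^2 \Delta Q_\e + 2\eta_\e^2 Q_\e + \frac{\e^2}{2\logep}\,\eta_\e^{-2}|\nabla Q_\e|^2 = 2\eta_\e^2 \rho_\e \quad \text{in } \Om, \qquad \nu\cdot\nabla Q_\e = 0 \text{ on } \p\Om.
\]
Two features govern the analysis: the leading elliptic coefficient $\e^2$ degenerates as $\e\to0$ (so this is a reaction-dominated singular limit whose formal balance is $Q_\e \approx \rho_\e$), while every genuinely nonlinear term carries a small prefactor ($\logep^{-1}$ or $\e^2\logep^{-1}$). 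I would first record a uniform $L^\infty$ bound: applying the maximum principle to \eqref{eq:TFeqn} (with $\eta_\e>0$) at interior extrema gives $\min p_\e^2 \le \eta_\e^2 \le \max p_\e^2$, and the Hopf lemma rules out boundary extrema under the Neumann condition. Since $\rho_\e\to\rho_0$ in $H^k$ with $k\ge3$ embeds into $L^\infty(\Om)$, this yields $\|\eta_\e^2 - 1\|_{L^\infty}\lesssim \logep^{-1}$, hence $\|Q_\e\|_{L^\infty}\le C$ and $\tfrac12 \le \eta_\e^2 \le 2$ for all small $\e$.

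Next I would establish the base energy estimate for $R_\e := Q_\e - \rho_\e$, which satisfies
\[
-\e^2\Delta R_\e + 2\eta_\e^2 R_\e = \e^2\Delta\rho_\e - \frac{\e^2}{2\logep}\eta_\e^{-2}|\nabla Q_\e|^2, \qquad \nu\cdot\nabla R_\e = 0.
\]
Testing against $R_\e$, integrating by parts (no boundary contribution, since $\nabla\rho_\e$ is compactly supported in $\Om$), and using the coercivity $2\eta_\e^2\ge 1$ together with the $L^\infty$ bound on $R_\e$, the nonlinear term is dominated by $C\e^2\logep^{-1}\|\nabla Q_\e\|_{L^2}^2$, which after splitting $\nabla Q_\e = \nabla R_\e + \nabla\rho_\e$ can be absorbed into the $\e^2\|\nabla R_\e\|_{L^2}^2$ term on the left for $\e$ small. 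This gives $\|R_\e\|_{L^2}^2 \lesssim \e^2\|\nabla\rho_\e\|_{L^2}^2$ and a uniform bound on $\|\nabla Q_\e\|_{L^2}$.

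The crux --- and the step I expect to be the main obstacle --- is upgrading this to uniform $H^k$ bounds in the singular regime, where the naive constant in elliptic regularity scales like $\e^{-2}$. I would proceed by induction on the order $m\le k$: apply $\p^\beta$ with $|\beta|=m$ to the $Q_\e$-equation, test with $\p^\beta Q_\e$, and sum. The zeroth-order term supplies coercivity $2\int \eta_\e^2|\p^\beta Q_\e|^2 \gtrsim \|\p^\beta Q_\e\|_{L^2}^2$, while the small but nonzero diffusion term supplies $\e^2\|\nabla\p^\beta Q_\e\|_{L^2}^2$. The only dangerous contribution is the derivative-losing nonlinear piece $\tfrac{\e^2}{2\logep}\p^\beta(\eta_\e^{-2}|\nabla Q_\e|^2)$: at top order it produces $\p^\beta\nabla Q_\e$ paired against lower derivatives of $Q_\e$, but its $\e^2\logep^{-1}$ prefactor lets me split it by Young's inequality into a piece absorbed by $\e^2\|\nabla\p^\beta Q_\e\|_{L^2}^2$ and a remainder with coefficient $\e^2\logep^{-2}$ times $\|\p^\beta Q_\e\|_{L^2}^2$ and lower-order norms of $\nabla Q_\e$ (controlled via Gagliardo--Nirenberg at low orders and the inductive hypothesis, using that $H^s(\Om)$ is an algebra for $s>1$ in two dimensions). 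The commutators $[\p^\beta,\eta_\e^2]Q_\e$ and the $\logep^{-1}$-weighted contributions from $2\eta_\e^2\rho_\e = 2\rho_\e + 2\logep^{-1}Q_\e\rho_\e$ are likewise lower order and absorbable. Boundary regularity is handled by the standard tangential-difference-quotient argument, where the Neumann condition and the compact support of $\nabla\rho_\e$ (so that $p_\e^2$ is constant near $\p\Om$) prevent boundary-layer loss. This closes the induction and yields $\sup_{0<\e<1}\|Q_\e\|_{H^k(\Om)} < \infty$.

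Finally I would extract convergence from the algebraic form $2\eta_\e^2 R_\e = \e^2\Delta R_\e + \e^2\Delta\rho_\e - \tfrac{\e^2}{2\logep}\eta_\e^{-2}|\nabla Q_\e|^2$. The uniform $H^k$ bound makes $\e^2\Delta R_\e$ and $\e^2\Delta\rho_\e$ tend to $0$ in $H^{k-2}$, and the nonlinear term is $O(\e^2\logep^{-1})$ in $H^{k-2}$; dividing by $\eta_\e^2$ (bounded below and converging to $1$ in the $H^k$-algebra) gives $R_\e\to0$ in $H^{k-2}$. Interpolating this with the uniform $H^k$ bound yields $R_\e\to0$, hence $Q_\e\to\rho_0$, in $H^{k-\delta}(\Om)$ for every $\delta>0$. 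The Hölder statements then follow from the Sobolev embeddings $H^{k-\delta}(\Om)\hookrightarrow C^{2,\delta'}(\overline{\Om})$ for $k\ge4$ and $H^{k-\delta}(\Om)\hookrightarrow C^{1,\delta'}(\overline{\Om})$ for $k\ge3$, with $\delta,\delta'$ small; this is precisely the $H^{3+}$ formulation anticipated in the footnote.
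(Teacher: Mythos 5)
Your proposal is sound in outline and reaches the stated conclusions, but it takes a genuinely different route from the paper's. The paper expands the \emph{amplitude} itself, $\eta_\e = 1+\widetilde Q_\e/\logep$, which turns \eqref{eq:TFeqn} into the semilinear equation \eqref{Qeq} for $\widetilde Q_\e$ with purely polynomial (zeroth-order) nonlinearities carrying $\logep^{-1}$ prefactors; it then proves a clean linear estimate $\e^2\|w\|_{H^{k+2}}+\|w\|_{H^k}\le C\|f\|_{H^k}$ for the model operator $1+A/\logep-\tfrac{\e^2}{2}\Delta$ by Nirenberg-style difference quotients and treats the nonlinearity perturbatively. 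You instead expand the \emph{density} $\eta_\e^2=1+Q_\e/\logep$ (which is the variable the proposition is actually about), at the cost of introducing the quasilinear term $\tfrac{\e^2}{2\logep}\eta_\e^{-2}|\nabla Q_\e|^2$; your induction must then handle a derivative-losing gradient nonlinearity, which the small prefactor $\e^2\logep^{-1}$ does permit, but at low orders ($m=2,3$, before $\nabla Q_\e\in L^\infty$ is available from the inductive hypothesis) the Gagliardo--Nirenberg splitting produces terms superlinear in $\|Q_\e\|_{H^m}$ and closing requires an absorption/continuity argument exploiting $\e^2\logep^{-2}\to0$ --- the same kind of bootstrap the paper needs for its cubic terms, but structurally more delicate in your formulation. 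Your base estimate comes from testing the $R_\e$-equation rather than from the paper's variational comparison with the competitor $\eta=p_\e$ (which gives $\|p_\e^2-\eta_\e^2\|_{L^2}\le\sqrt2\,\e\|\nabla p_\e\|_{L^2}$ for free); your upfront maximum-principle bound $\min p_\e^2\le\eta_\e^2\le\max p_\e^2$ is a genuine simplification the paper does not use, and it buys you $\|Q_\e\|_{L^\infty}\le C$ before any regularity theory. Two caveats: (i) you do not address existence, positivity, or uniqueness of $\eta_\e$ for fixed $\e$ (the paper's Step 1 and the end of its Step 4), which your maximum-principle and regularity arguments implicitly presuppose; a sentence invoking the Lassoued--Mironescu minimization \eqref{e:etaenergy} would close this. (ii) Your final interpolation yields $Q_\e\to\rho_0$ in $H^{k-\delta}$ for every $\delta>0$ rather than in $H^k$ as literally claimed; this matches the proposition's own footnote and suffices for the $C^{2,\delta}$ and $C^{1,\delta}$ embeddings that are actually used downstream, but you should flag that the full $H^k$ statement would require the additional top-order estimate the paper carries out in its Step 4.
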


\begin{rem}
In the appendix, we will actually decompose $\eta_\e = 1 +\frac{\tQ_\e}{|\log\e|}$, in which case
\[ Q_0 =  \lim_{\e \to 0} \left[ 2 \tQ_\e + \frac{\tQ_\e^2}{\logep}  \right]  ;\]
this convention slightly simplifies the resulting elliptic analysis.
\end{rem}

\begin{rem}
Similar results hold assuming Dirichlet boundary conditions, but we work here with Neumann as they are the most physically relevant.
\end{rem}


\section{Excess Energy Control}

\label{s:excess}
In this section we present an excess energy identity similar to \cite{CJ, JSp2}.  The
influence of the background requires some modifications, so we give a full proof. The form of the estimate will be similar to that found in \cite{BJS}.  

Recall from \eqref{eq:rho} the quantity
$$
r_\alpha = {1\over 8} \min \LCB \min_{i\neq j}  |\alpha_i - \alpha_j|, \min_i \dist (\alpha_i, \p \Omega)  \RCB.
$$
Then, we have  
\begin{prop} \label{p:excessenergy}
 Let $w \in H^1(\Omega; \C)$  and assume that there exists $\alpha$ such that $D_\e(\alpha, w) \leq 1$. Then there exists constants $\e_0 > 0$ and $\mu_0 > 0$ 
such that if $\e<\e_0$,  $r \leq r_\alpha$, 
and   
$$
\mu \equiv \LN J(w) -  \pi \sum d_j \delta_{\alpha_j}  \RN_{\dot{W}^{-1,1}} \leq \mu_0
$$ 
 then 
\begin{equation} \label{e:excessenergy}
\int_{\Omega_r(\alpha)} e^{\etae}_\e (|w|) + {1\over 8} \LV { j(w) \over |w|} - j(w_*) \RV^2 dx 
\leq D_\e + 
o_{\e, \mu}(1)
\end{equation}
where $o_{\mu,\e}(1)$ vanishes as $\e,\mu \to 0$.  
We also have 
\begin{align} 
\label{e:potbound}
\int_{\Omega} e_\e^1(|w|) dx & \leq C,  \\
\label{e:Lpbound}
\LN j(w) \RN_{L^p(\Omega)} & \leq C, \qquad 1 \leq p < 2.
\end{align}
Furthermore, there exists points $\xi_j \in B_{r/2}(\alpha_j)$ such that 
\begin{equation}  \label{e:localization}
\LN J(w) - \pi \sum d_j \delta_{\xi_j}  \RN_{\dot{W}^{-1,1}} \leq C \e \logep 
\end{equation}
and
\begin{equation}  \label{e:equipart}
\LN {\LC \p_k w, \p_\ell w\RC \over \logep} -  \delta_{k \ell} \sum \pi  \delta_{\xi_j}  \RN_{\dot{W}^{-1,1}} \leq C \logep^{-{1\over2}} ,
\end{equation}
where $\delta_{k \ell}$ is the Kronecker delta.  Here $C$ is a constant depending on $Q_0$, $\Omega$, $r_\alpha$, $n$, $p$, $\mu_0$, and $\e_0$.  

\end{prop}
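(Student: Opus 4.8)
The plan is to prove \eqref{e:excessenergy} as a sharp lower bound for $E_\e^{\etae}(w)$ and then read off the remaining estimates. The starting point is the Lassoued--Mironescu-type splitting of the kinetic energy: writing $|\nabla w|^2 = \LV\nabla|w|\RV^2 + |j(w)|^2/|w|^2$ pointwise, the density decouples as $e_\e^{\etae}(w) = e_\e^{\etae}(|w|) + \frac12\etae^2|j(w)|^2/|w|^2$, so the modulus energy and the current energy separate. I would then split the domain into the vortex cores $B_r(\alpha_j)$ and the exterior $\Omega_r(\alpha)$ and bound each current contribution from below, arranging the total to match $H_\e(\alpha)$ up to $o_{\e,\mu}(1)$.

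On each core $B_r(\alpha_j)$ I would apply a Jerrard/Sandier vortex-ball lower bound for the unweighted Ginzburg--Landau energy, yielding $\int_{B_r(\alpha_j)} e_\e^1(w) \geq \pi|d_j|\log(r/\e) + \gamma_0 + o(1)$. The inhomogeneity enters by pulling the slowly varying weight out: Proposition~\ref{p:ellconv} gives $\etae^2\to 1 + Q_0/\logep$ in $C^{2,\delta}$, so on $B_r(\alpha_j)$ one has $\etae^2 = 1 + Q_0(\alpha_j)/\logep + o(1/\logep)$, and multiplying the $\pi\log(r/\e)\sim\pi\logep$ core energy by this factor produces exactly the $\pi Q_0(\alpha_j)$ correction. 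Summing over $j$ recovers the $\sum_j(\pi\logep + \pi Q_0(\alpha_j) + \gamma_0)$ part of $H_\e(\alpha)$.

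On $\Omega_r(\alpha)$, where $|w|$ is close to $1$ and there is no topology, I would compare $j(w)/|w|$ to $j(w_*)$ via the identity $\frac12\etae^2\LV j(w)/|w|\RV^2 = \frac12\etae^2|j(w_*)|^2 + \etae^2 j(w_*)\cdot\LC j(w)/|w| - j(w_*)\RC + \frac12\etae^2\LV j(w)/|w| - j(w_*)\RV^2$; since $\etae^2\geq\frac14$ for small $\e$, the last term dominates $\frac18\LV j(w)/|w| - j(w_*)\RV^2$, the localized quadratic term in \eqref{e:excessenergy}. The leading term integrates, by definition of the renormalized energy, to $n\pi\log(1/r) + W(\alpha,d) + o(1)$, the weight correction here being genuinely lower order since the exterior density is $O(1)$. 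The cross term $\int_{\Omega_r(\alpha)}\etae^2 j(w_*)\cdot\LC j(w)/|w| - j(w_*)\RC dx$ is the crux: I would integrate by parts, using that $j(w_*)$ is divergence-free with $\curl j(w_*)$ concentrated at the $\alpha_j$, and control the remaining vorticity mismatch by the well-preparedness quantity $\mu = \LN J(w) - \pi\sum d_j\delta_{\alpha_j}\RN_{\Wdot}$, while the weight discrepancy $\etae^2 - 1 = O(1/\logep)$ contributes a further $o(1)$ term, so the cross term is $o_{\e,\mu}(1)$. Combining the core and exterior bounds, the $n\pi\log(1/r)$ and $\pi\log r$ pieces cancel and the total lower bound becomes $H_\e(\alpha) + \int_{\Omega_r(\alpha)}\LB e_\e^{\etae}(|w|) + \frac18\LV j(w)/|w| - j(w_*)\RV^2\RB - o_{\e,\mu}(1)$, which rearranges to \eqref{e:excessenergy}.

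Finally I would extract the auxiliary estimates. The modulus bound \eqref{e:potbound} follows from the splitting together with the vortex-ball lower bound $\int\frac12\etae^2|j(w)|^2/|w|^2 \geq n\pi\logep - C$: subtracting this from the upper bound $E_\e^{\etae}(w)\leq n\pi\logep + C$ (valid since $D_\e\leq1$) leaves $\int e_\e^{\etae}(|w|)\leq C$, hence $\int_\Omega e_\e^1(|w|)\leq C$ since $\etae\to1$. The current bound \eqref{e:Lpbound} comes from $|j(w)|\leq|w||\nabla w|$ together with the $|x-\alpha_j|^{-1}$ blow-up rate of the phase gradient near the cores, which lies in $L^p$ precisely for $p<2$. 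The Jacobian localization \eqref{e:localization} and the equipartition estimate \eqref{e:equipart} I would obtain from the Jerrard--Soner Jacobian and stress-energy concentration machinery (cf.\ \cite{JSm, KSEquipartition}), choosing the $\xi_j$ as the true vortex centers inside $B_{r/2}(\alpha_j)$, with the stated rates $\e\logep$ and $\logep^{-1/2}$ being the standard errors. I expect the principal obstacle to be the sharp, to $o_{\e,\mu}(1)$, treatment of the weighted cross term: one must simultaneously account for the mismatch between the unweighted canonical map $w_*$ and the weight $\etae^2$, and convert the flat-norm smallness of the vorticity into smallness of the bilinear pairing, which is exactly where the hypothesis $\mu\le\mu_0$ and the localization step are essential.
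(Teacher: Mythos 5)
Your overall strategy coincides with the paper's: split the energy into modulus and current parts, decompose the domain into cores and an exterior, use an unweighted vortex-ball lower bound with the weight $\etae^2=1+Q_0/\logep+o(\logep^{-1})$ pulled out on the cores to produce the $\pi Q_0(\alpha_j)$ terms, expand the exterior current energy around $j(w_*)$ via the same quadratic identity (cf.\ \eqref{e:lassmiron}), and control the cross term through the flat-norm smallness of $J(w)-\pi\sum d_j\delta_{\alpha_j}$. The auxiliary estimates \eqref{e:potbound}--\eqref{e:equipart} are also extracted the same way (vortex-ball lower bounds, Hodge-type $L^p$ bounds, and the Jacobian/stress-energy localization machinery).

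There is, however, one genuine gap: you cut at the \emph{given, fixed} radius $r\le r_\alpha$. With a fixed cutting radius, the identification of the exterior energy of the canonical map with the renormalized energy, $\tfrac12\int_{\Omega_r(\alpha)}|\nabla w_*|^2 = n\pi\log\tfrac1r + W(\alpha,d)+\text{error}$, carries an error of size $O(r^2/r_\alpha^2)$ (this is exactly the content of Lemma~12 of \cite{JSp2}), which does \emph{not} vanish as $\e,\mu\to0$; your ``$o(1)$'' here is only an $o(1)$ as $r\to0$, a limit you are not free to take since $r$ is prescribed in the statement. The paper's proof resolves this by introducing an intermediate radius $\sigma=\max\{\logep^{-1/3},\sqrt{\mu}\}$ as in \eqref{e:sigmadef}, carrying out the whole core/exterior matching at scale $\sigma$, and only at the end restricting from $\Omega_\sigma(\alpha)$ to $\Omega_r(\alpha)\subset\Omega_\sigma(\alpha)$ using positivity of the integrand. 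Crucially, $\sigma$ is subject to a two-sided constraint that is specific to the inhomogeneous setting and that your sketch does not surface: it must tend to $0$ so that the $O(\sigma^2/r_\alpha^2)$ renormalized-energy error vanishes, it must dominate $\mu$ so that the degree can be localized in $B_\sigma(\alpha_j)$, but it cannot be too small because the weight-discrepancy terms scale like $\|1-\etae^2\|_{L^\infty}\|j(w_*)\|_{L^\infty(\Omega_\sigma)}^2\sim(\sigma^2\logep)^{-1}$, forcing $\sigma\gg\logep^{-1/2}$. Balancing these is the main new technical point of the proposition; without the shrinking intermediate radius your argument only yields \eqref{e:excessenergy} up to an additive constant depending on $r/r_\alpha$ rather than up to $o_{\e,\mu}(1)$.
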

\begin{rem} \label{r:Lqmod}
Note that \eqref{e:potbound} and Rellich-Kondrachov implies 
\begin{equation} \label{e:Lqmodstrongconv}
\LN |w| - 1 \RN_{L^q(\Omega)} = o_{\e, \mu}
\end{equation}  for all $q < +\infty$. 
\end{rem}

\begin{proof}
Although it is possible to get explicit control on the error $o_\e(1)$ in \eqref{e:excessenergy} by a careful analysis
as in \cite{JSp2, KS2}, the weaker estimate \eqref{e:excessenergy} is sufficient to prove the vortex motion law.

1.  We first decompose the excess energy into  
\begin{align*}
D_\e & = E^{\etae}_\e(w) - H_\e(\alpha, Q_0) \\ 
& = \int_{\Omega} e_\e^{\etae}(w) dx  - \LB \sum_{j=1}^n \LC \pi \logep + \gamma_0 + \pi Q_0(\alpha_j) \RC + W(\alpha) \RB \\
& = \int_{\Omega_\sigma(\alpha)} e_\e^{\etae}(w) - e_\e^{\etae} (w_*) dx  +  \int_{\Omega_\sigma(\alpha)}  e_\e^{\etae} (w_*) dx - W(\alpha)   \\
& \quad 
+  \sum_{j=1}^n \LB \int_{B_\sigma(\alpha_j)}  e_\e^{\etae} (w) dx - \LC \pi \logep + \gamma_0  +  \pi Q_0(\alpha_j)\RC  \RB \\
& = I + II +III, 
\end{align*}
where $\sigma$ is appropriately chosen and $\sigma \leq r \leq {r_\alpha} $.  

We will control terms $I - III$ by variants of  estimates found in \cite{JSp2, KS2, JSm}. 
  We will choose 
\begin{equation} \label{e:sigmadef}
\sigma = \max\{ \logep^{-{1\over 3}}, \sqrt{ \mu} \}
\end{equation}
 in the following.

2. By a simple variation of a standard calculation, see for example \cite{JSp2},
\begin{equation} \label{e:lassmiron}
{\etae^2 \over 2} \LV {j(w) \over |w|} - j(w_*) \RV^2 + e_\e^{\etae} (|w|) 
= e_\e^{\etae} (w) - e_\e^{\etae}(w_*) + \etae^2 j(w_*) \cdot \LC j(w_*) - {j(w)\over |w|} \RC, 
\end{equation}
and so our primary concern will be with estimating the integral of \eqref{e:lassmiron} on a suitable subdomain $\Omega_\sigma(\alpha)$.  
This last term can be rewritten as
\begin{align*}
\etae^2 j(w_*) \cdot \LC j(w_*) - {j(w)\over |w|} \RC
& = \LC {\etae^2 } -1 \RC j(w_*) \cdot \LC j(w_*)  - {j(w)\over  |w|} \RC \\
& \quad + j(w_*) \cdot \LC j(w_*) - {j(w) \over |w|}  \RC  .
\end{align*}

We now fix our $\mu_0$ and $\e_0$ in order to be able to cite some estimates from \cite{JSp1, JSp2}.
The tool we wish to use is in the proof of Lemma $4$ in \cite{JSp2}, which yields an estimate given some assumptions on the Jacobian 
and $\e$.  Let $K_1$ be the constant in the assumptions of Lemma $4$ from \cite{JSp2}, which depends only on $\Omega$.   

We follow an argument in the proof of Theorem 6.1 of \cite{BJS} for fixing these constants; however, there
are several new constraints including the fact that the vortex ball radius, $\sigma$, cannot be too small due to the need to control terms like $\LN \eta_\e^2 - 1 \RN_{L^\infty} \LN j(w_*) \RN_{L^\infty(\Omega_\sigma)}$. 

We first choose $\mu_0$.  Set $\mu_1$ such that 
\begin{equation} \label{e:eta0}
4 \mu \leq \sqrt{ \mu} \leq  \min \LCB r, {r_\alpha \over n K_1} \RCB
\end{equation}
for all $0 \leq \mu \leq \mu_1$, i.e. $\mu_1 = \min\LCB {1 \over 16} , {r^2 }, {r^2_\alpha \over n^2   K^2_1} \RCB$.
We further restrict $\mu_0 = \min\{ \mu_1, {r_\alpha \over 8 K_2 n^5} \}$ where $K_2$ comes from the assumptions in
Theorem 3 of \cite{JSp2}, needed for the localization result \eqref{e:localization}.

We now choose an $\e_0$ to allow us to use the necessary array of estimates.
 First, we choose $\e_2$ so that for all $\e < \e_2$,  
\begin{equation} \label{e:e2}
\logep^{-{1\over 3}} \leq \min \LCB r, {r_\alpha \over n K_1} \RCB.
\end{equation}
We then set $\e_1 \leq \e_2$ so that $\e E^{1}_\e(w) \leq \sqrt{\e}$ for all $\e < \e_1$.  
Indeed  since 
$\eta^2_\e \geq {1\over2}$ then $E^1_\e(w) \leq {1\over \min \eta_\e^4} E_\e^{\etae}(w) \leq 4 C \logep$, so there exists
an $\e_1$ such that $\e E^1_\e(w) \leq 4 C \e  \logep \leq \sqrt{\e}$ for all $\e \leq \e_1$.   
%
We next fix $\e_0 \leq \e_1$ such that, 
\begin{equation} \label{e:e0}
\e \log {r_\alpha \over \e} \leq \logep^{-{2\over 3}}
\end{equation}
for all $0 \leq \e \leq \e_0$.


If $ \mu \leq  \logep^{-{2\over 3}}$ then we use $s_\e = \logep^{-{2\over 3}}$ 
and $\sigma =  \logep^{-{1\over 3}} $ in
Lemma 4 of \cite{JSp2}.  In particular \eqref{e:e2} and \eqref{e:e0} imply that the assumptions in Lemma 4 hold, and we find
\begin{equation}
\begin{split}
& \LV \int_{\Omega_{\sigma} (\alpha)}  j(w_*) \cdot \LC j(w_*) - {j(w) \over |w|}  \RC dx \RV  \\
& \leq  {1\over 4} \int_{\Omega_\sigma(\alpha)} \LV {j(w) \over |w|} - j(w_*) \RV^2 dx  
 +  C  \logep^{{1\over 3}} \LC \logep^{-{2\over 3}} + \sqrt{\e} \RC +  {C \over r_\alpha} \logep^{-{1\over3}} \\
 & \leq  {1\over 4} \int_{\Omega_\sigma(\alpha)} \LV {j(w) \over |w|} - j(w_*) \RV^2 dx  
 +  o_{\e, \mu}(1).
\end{split}
\end{equation}

If $\logep^{-{2\over3}} \leq \mu$ then again the assumptions hold for Lemma 4 of \cite{JSp2}, and 
choose $s_\e = \mu$ and $\sigma = \sqrt{ \mu }$ so    
\begin{equation}
\begin{split}
& \LV \int_{\Omega_{\sigma} (\alpha)}  j(w_*) \cdot \LC j(w_*) - {j(w) \over |w|}  \RC dx  \RV \\
& \leq  {1\over 4} \int_{\Omega_\sigma(\alpha)} \LV {j(w) \over |w|} - j(w_*) \RV^2 dx   +  {C \over \sqrt{\mu}} \LC \mu + \sqrt{\e} \RC 
+ C{ \mu \over r_\alpha}  \\
 & \leq  {1\over 4} \int_{\Omega_\sigma(\alpha)} \LV {j(w) \over |w|} - j(w_*) \RV^2 dx  
 +  o_{\e, \mu}(1).
\end{split}
\end{equation}

Taking the sum over the two errors yields sufficient control on $\int_{\Omega_\sigma} j(w_*) \cdot \LC j(w_*) - {j(w) \over |w|}  \RC dx$.

Finally, we note 
\begin{align*}
& \LV \int_{\Omega_\sigma(\alpha)}  \LC {\etae^2 } -1 \RC j(w_*) \cdot \LC j(w_*)  - {j(w)\over  |w|} \RC   dx \RV \\ 
& \qquad \leq C \LN 1 - \eta^2_\e \RN_{L^\infty} \LN j(w_*) \RN_{L^\infty (\Omega_\sigma(\alpha)) }\LC 1 +
  \int_{\Omega_\sigma(\alpha)}   \LV j(w_*)  - {j(w)\over  |w|} \RV^2 dx \RC \\
  & \qquad \leq C {\min\{ \logep^{1\over3},  \sqrt{1 \over \mu } \}  \over \logep }  \LC 1 +  \int_{\Omega_\sigma(\alpha)}   \LV j(w_*)  - {j(w)\over  |w|} \RV^2 dx \RC \\
  &\qquad \leq o_{\e, \mu} (1) \LC 1 +  \int_{\Omega_\sigma(\alpha)}   \LV j(w_*)  - {j(w)\over  |w|} \RV^2 dx \RC.
\end{align*}
Combining these estimates together controls term $I$.

3.  We control  $II$ using the explicit control in Lemma 12 \cite{JSp2},  
\begin{align*}
& \LV \int_{\Omega_\sigma(\alpha )}  e_\e^{\etae} (w_*) dx - \LC  W(\alpha) + n \pi \log {1\over \sigma} \RC \RV \\
& = \LV {1\over 2} \int_{\Omega_\sigma(\alpha )} \eta_\e^2 \LV \nabla w_* \RV^2 dx - \LC  W(\alpha) + n \pi \log {1\over \sigma} \RC \RV \\
& \leq \LV  {1\over 2} \int_{\Omega_\sigma(\alpha )}  \LV \nabla w_* \RV^2 dx - \LC  W(\alpha) + n \pi \log {1\over \sigma} \RC \RV 
 \quad + C \LN 1 - \eta^2_\e \RN_{L^\infty} \LN j(w_*) \RN^2_{L^\infty(\Omega_\sigma)} \\
& \leq C {\sigma^2 \over r_\alpha^2} + C { 1 \over \sigma^2 \logep}
 \leq C   {\max\{ \logep^{-{2\over 3}}, { \mu } \} \over r_\alpha^2} +  C {\min\{ \logep^{{2\over 3}}, {1 \over  \mu } \}\over \logep } 
= o_{\e, \mu}(1).
\end{align*}

4.  We now consider terms $III$.  
Recall that $Q_\e(x) = \logep \LC \eta_\e^2(x) - 1 \RC$.   
We set $q_\e(r,x_0)=\inf_{x\in B_r( x_0)} Q_\e(x)$ and $\widetilde \e_r = {\e \over 1 + q_\e(r,x_0)}$.  Recall from \eqref{e:e2} that
\begin{align*}
\LN J(w) - \pi d_j \delta_{\alpha_j} \RN_{\dot{W}^{-1,1}(B_\sigma(\alpha_j))}
 \leq \LN J(w) - \pi \sum d_k \delta_{\alpha_k} \RN_{\dot{W}^{-1,1}(\Omega)} 
 \leq \mu \leq {\sqrt{\mu} \over 4} \leq {\sigma \over 4};
\end{align*}
therefore, we can invoke Theorem~$1.3$ and Lemma~$6.8$ of \cite{JSp1}: 
\begin{align*}
\int_{B_\sigma(\alpha_j)} e_{\widetilde{\e}_\sigma}^1(w) dx & \geq \pi \log {\sigma \over \widetilde{\e}_\sigma} 
+ \gamma_0  - C {\widetilde{\e}_\sigma \over \sigma} \sqrt{\log { \sigma \over \widetilde{\e}_\sigma}}
- C {\mu \over \sigma} \\
& \geq  \pi \log {\sigma \over \e}  + \gamma_0 + \pi \log (1 + {Q_\e(\sigma, \alpha_j) \over \logep}) - C { \e \logep }  
- C \sqrt{\mu } ,
\end{align*}
so
\begin{equation}
\int_{B_\sigma(\alpha_j)} e_{\widetilde{\e}_\sigma}^1(w) dx \geq
\pi \log {\sigma \over \e}  + \gamma_0 - o_{\e,\mu}(1).
\end{equation}
Therefore,
\begin{align*}
& \int_{B_\sigma(\alpha_j)} e^{\etae}_\e(w) dx - \LC \pi \log {\sigma \over \e} + \gamma_0 +  \pi Q_0(\alpha) \RC  \\
& \ge \LC 1+ {Q_\e(\sigma,\alpha_j) \over \logep} \RC  \LC \pi \log {\sigma \over \e}  + \gamma_0 - o_{\e,\eta}(1) \RC  
 - \LC \pi \log {\sigma \over \e} + \gamma_0 +  \pi Q_0(\alpha) \RC  \\
 & \geq -\pi  |q_\e(\sigma, \alpha_j) - Q_0(\alpha_j)| - C { \log \sigma + 1 \over \logep} \\
 &  \geq -  \pi |q_\e(\sigma, \alpha_j) - Q_0(\alpha_j)| -C {  \log \logep  \over \logep} 
  = - o_{\e, \mu}(1),
\end{align*}
 and so \eqref{e:excessenergy} follows.


5.  We next turn to the proof of \eqref{e:potbound} and \eqref{e:Lpbound}. 
 We first choose vortex balls $B_{r_\alpha}(\alpha_j)$; by \eqref{e:eta0}, 
\begin{equation}
\label{e:Jacwlocralpha}
\LN J(w) - \pi  d_j \delta_{\alpha_j} \RN_{\dot{W^{-1,1}(B_r(\alpha_j))}} 
\leq \LN J(w) - \pi \sum d_k \delta_{\alpha_j} \RN_{\dot{W^{-1,1}(\Omega)}}  \leq \mu \leq {r_\alpha \over 4}, 
\end{equation} 
and arguing as in Step 4,
\begin{equation*} \label{e:singlevortexlower}
\int_{B_{r_\alpha}(\alpha_j)} e_\e^{\etae} (w)dx \geq \pi \logep -   C(r_\alpha, Q_0).
\end{equation*}
Therefore,
\begin{equation} \label{e:cutoutbound}
\begin{split}
\int_{\Omega_{r_\alpha} (\alpha)} e_\e^{\etae}(w)dx & = E_\e^{\etae}(w) - \sum_{j=1}^n e_\e^{\etae}(w) dx  \\
& \leq D_\e + n \gamma_0 + \pi \sum_{j=1}^n Q_0(\alpha_j) + W(\alpha) +  C(n, r_\alpha, Q_0) ,
\end{split}
\end{equation}
and so,
\begin{align}
\int_{B_{r_\alpha}(\alpha_k)} e_\e^{\etae}(w) dx & = E_\e^{\etae}(w) -  \int_{\Omega_r(\alpha)} e_\e^{\etae}(w) dx 
- \sum_{j\neq k} \int_{B_{r_\alpha}(\alpha_j)} e_\e^{\etae}(w) dx  \nonumber \\
& \leq \pi \logep +  C(n, r_\alpha, Q_0)  \label{e:singlevortexenergyrQ} . 
\end{align}
Finally, $\etae^2 \geq{1\over 2}$, \eqref{e:singlevortexenergyrQ}, and \eqref{e:Jacwlocralpha} allow us to use (4.27) in the proof of Proposition  4.2 in \cite{JSp1} which in turn implies
\begin{equation} \label{e:modboundball}
\int_{B_r(\alpha_j)} e_\e^1(|w|) dx \leq  C(n, r_\alpha, Q_0). 
\end{equation}
 Since $r \leq r_\alpha$, we can combine \eqref{e:excessenergy}, $\eta_\e^2 \geq {1\over 2}$, and \eqref{e:modboundball} to get bound \eqref{e:potbound}.

To prove \eqref{e:Lpbound} we use an $L^p$ bound on each vortex ball $B_{r_\alpha}(\alpha_j)$ in Theorem~3.2.1 of \cite{CJ}.  
Outside the vortex balls, we use use the decomposition argument; in particular,  
since $\eta^2_\e(x) \geq {1\over 2}$ then 
${1 \over 4} \int_{\Omega_{r_\alpha}(\alpha)} \LV { j( w)  \over |w|} \RV^2 dx  \leq \int_{\Omega_{r_\alpha}(\alpha)} e_\e^{\eta_\e}(w) dx \leq C(D_\e, r_\alpha)$ from \eqref{e:cutoutbound}.  
From Remark~\ref{r:Lqmod} and the above bound, one finds
\begin{align*}
\LN j(w)  \RN_{L^p(\Omega_{r_\alpha}(\alpha))} & \leq \LN {j(w) \over |w|} \RN_{L^p(\Omega_{r_\alpha}(\alpha))} + \LN {j(w) \over |w|} (1 - |w|  )\RN_{L^p(\Omega_{r_\alpha}(\alpha))} \\
& \leq \LN {j(w) \over |w|} \RN_{L^2(\Omega_{r_\alpha}(\alpha))} \LC C + \LN 1 - |w| \RN_{L^{2 p \over 2 - p}(\Omega_{r_\alpha}(\alpha))} \RC \leq C(r_\alpha, Q_0, p), 
\end{align*}
due to Remark~\ref{r:Lqmod}.  Combining the vortex ball bound with the bound in the excised domain yields \eqref{e:Lpbound}.

5.  The restriction on $\mu_0$ allows us to use Theorem 3 of \cite{JSp2}, and estimate \eqref{e:localization} follows directly.  To establish 
\eqref{e:equipart} we can follow the proof of Theorem 11 in \cite{KS2}, along with the equipartioning result of \cite{KSEquipartition}, which 
lets us localize the stress-energy tensor $\nabla w \otimes \nabla w$ up to an error of order $O(\sqrt{\logep})$.

\end{proof}


\section{Proof of Theorem~\ref{t:vortexdynamics}}

The proof of the vortex motion law entails first establishing the smooth evolution of the vortex paths $\xi_j(t)$ via the 
compactness theorems in Section~\ref{s:excess}.  The second step will compare the vortex paths with the ODE \eqref{e:ODE}.

\subsection{Lipschitz Continuity of the Vortex Paths}

\begin{prop} \label{p:lipschitzpaths}

Let $\{a_{j}^0, d_j\}$ be a configuration of vortices such that $d_j \in \{-1, 1\}$. 
Let $w_\e, \ 0 < \e <1$, satisfy \eqref{e:weightedGP} under well-preparedness condition \eqref{e:wellpreparedness}. There exist a time $T > 0$, 
a sequence $\e_k \to 0$, and $\xi_j \in Lip([0,T_0], \Omega)$  such that $\xi_j(0) = a^0_j$ 
and 
\begin{equation} \label{e:timelocalization}
\sup_k \LN J(w_{\e_k}(\cdot , t ) ) - \pi \sum d_j \delta_{\xi_j(t)} \RN_{\dot{W}^{-1,1}(\Omega)} \to 0 
\end{equation}
 as $k \to \infty$ and $t \in [0,T_0]$.  

\end{prop}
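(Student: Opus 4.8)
The plan is to convert the time regularity of the Jacobian into an equi-Lipschitz bound on suitably defined vortex centers, and then pass to the limit by Arzel\`a--Ascoli. The starting point is the a priori energy bound. Since \eqref{e:weightedGP} is Hamiltonian, the energy $E_\e^{\eta_\e}(w_\e(t))$ is conserved in $t$, and the well-preparedness hypothesis \eqref{e:wellpreparedness}, together with the matching lower bound from the vortex-ball construction underlying Proposition~\ref{p:excessenergy}, gives $E_\e^{\eta_\e}(w_\e(t)) = E_\e^{\eta_\e}(w_\e^0) = \pi n\logep + O(1)$ uniformly in $t$. This uniform bound is what will allow me to invoke Proposition~\ref{p:excessenergy} at each time, as long as the Jacobian remains $\mu_0$-close to a separated configuration; I would organize the whole argument as a continuation argument on an interval $[0,T_0]$, with $T_0$ fixed at the very end, so that the hypotheses $r_{\alpha(t)}>0$, $D_\e\le 1$ and $\mu\le\mu_0$ persist.

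Next, for each $\e$ and each index $k$ I would introduce the modulated centers $\xi_k^\e(t) = (\pi d_k)^{-1}\int_\Omega x\,\chi_k\,J(w_\e(t))\,dx$, where $\chi_k$ is a fixed cutoff equal to $1$ on $B_r(\alpha_k^0)$ and supported in $B_{2r}(\alpha_k^0)$, with $r$ a fixed fraction of $r_{\alpha_0}$ chosen so the supports are disjoint and each captures exactly one vortex on $[0,T_0]$. The localization estimate \eqref{e:localization} shows that $\xi_k^\e(t)$ agrees with the true vortex point $\tilde\xi_k(t)\in B_{r/2}(\alpha_k^0)$ up to $O(\e\logep)=o_\e(1)$, so it is a faithful and easily differentiable tracker; well-preparedness gives $\xi_k^\e(0)=\alpha_k^0+o_\e(1)$.

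The heart of the argument, and the main obstacle, is the uniform-in-$\e$ Lipschitz bound for $\xi_k^\e$. Differentiating in time and using the Jacobian evolution identity \eqref{e:modJacev2} with $\phi = x_m\chi_k$, I would estimate the two terms on the right separately. The first (vortex-vortex) term has integrand $\J_{\ell j}\p_j\p_k\phi\,\LC\p_k w,\p_\ell w\RC$ supported in the annulus $B_{2r}\setminus B_r(\alpha_k^0)$, where $\chi_k$ is non-affine and hence $\p_j\p_k\phi$ is nonzero; since \eqref{e:excessenergy}--\eqref{e:potbound} force the gradient energy outside the cores to be $O(1)$, and $\eta_\e^2\ge\tfrac12$, this term is $O(1)$. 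The second (vortex-potential) term carries the factor $\p_k\eta_\e^2/\eta_\e^2 = O(\logep^{-1})$, and although $\LC\p_\ell w,\p_k w\RC$ integrates to $O(\logep)$ over the core, the critical scaling $|\nabla\log\eta_\e| = O(\logep^{-1})$ exactly balances this growth, yielding an $O(1)$ contribution; the potential-well piece $\p_\ell\eta_\e^2\,(1-|w|^2)^2/(4\e^2)$ is controlled the same way and is in fact negligible. For the bare Lipschitz estimate the energy bound alone suffices here, but I would record that the equipartition estimate \eqref{e:equipart} identifies the leading contribution of the second term, its diagonal structure $\delta_{k\ell}\sum\pi\delta_{\xi_j}$ producing (after contraction with $\J$) the expected $\pi\nabla^\perp Q_0(\xi_k)$; the symmetric-Hessian--antisymmetric-$\J$ contraction is what makes the putative $O(\logep)$ piece vanish. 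Combining these gives $|\dot\xi_k^\e(t)|\le C$ uniformly in $\e$ and $t$, which in turn keeps $\mu(t)$ and $D_\e(w_\e(t),\xi^\e(t))$ within the admissible ranges and prevents collisions for $t\le T_0$ with $T_0\sim r_{\alpha_0}/C$, closing the continuation loop.

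Finally I would conclude by compactness. The maps $t\mapsto\xi_k^\e(t)$ are equi-Lipschitz with values in $\overline\Omega$, so Arzel\`a--Ascoli together with a diagonal extraction yields a subsequence $\e_n\to 0$ and Lipschitz limits $\xi_k\in\mathrm{Lip}([0,T_0],\Omega)$ with $\xi_k(0)=\alpha_k^0$. For the convergence \eqref{e:timelocalization}, I would write $J(w_{\e_n}(t)) = \pi\sum d_j\delta_{\tilde\xi_j(t)} + o_{\e_n}(1)$ via \eqref{e:localization}, note $\tilde\xi_j(t) = \xi_j^{\e_n}(t)+o_{\e_n}(1)\to\xi_j(t)$, and use the Brezis--Coron--Lieb concentration estimate (the flat-norm lemma preceding Section~\ref{intro_ee}) to get $\|\delta_{\tilde\xi_j(t)}-\delta_{\xi_j(t)}\|_{\dot{W}^{-1,1}} = |\tilde\xi_j(t)-\xi_j(t)|\to 0$ pointwise in $t$; the uniform-in-$t$ Lipschitz bound upgrades this pointwise convergence to uniform convergence on the compact interval $[0,T_0]$.
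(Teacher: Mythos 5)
Your proposal is correct and follows essentially the same route as the paper: both hinge on testing the Jacobian evolution identity \eqref{e:modJacev2} against $x_m\chi$, bounding the annulus term by the $O(1)$ energy outside the cores and the potential term by the critical balance $|\nabla\log\eta_\e|\cdot\logep=O(1)$, and then extracting Lipschitz limit paths by compactness. The only cosmetic difference is that you differentiate explicitly defined first-moment centers and apply Arzel\`a--Ascoli, whereas the paper establishes the approximate Lipschitz bound $\pi|\xi_j(s)-\xi_j(r)|\le C|s-r|+o_\e(1)$ for the localized points of \eqref{e:localization} and concludes by diagonalizing over a dense set of times.
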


\begin{proof}
1.  We first claim that for any $T > 0$, there exists an $\e$ such that $J(w_{\e})$ is a continuous function on $[0,T]$ into $L^1(\Omega)$.  
Since $J(u) = - \nabla^\perp \mathcal{R} u \cdot \nabla \mathcal{I} u$ then
\[
\LN J(w_\e(t)) - J(w_\e(s)) \RN_{L^1}
\leq C \LN \nabla w_\e (t) - \nabla w_\e (s) \RN_{L^2} \LN \nabla w_\e (t) + \nabla w_\e (s) \RN_{L^2} \leq C_\e o_{|t-s|}(1),
\]
since the solution map is a continuous function into  $H^1(\Omega)$. 

2.   We use \eqref{e:modJacev2} along with the results in Proposition~\ref{p:excessenergy}.  Set 
\begin{equation} \label{e:goodtestfcn}
\varphi_j(x) = x \chi \LC {|x - a^0_j | \over r_{a^0}} \RC
\end{equation}
where 
\[
\chi(s) = \left\{ \begin{array}{ll}  1 & \hbox{ for }  s \leq 1 \\  0 & \hbox{ for } s > 2 \end{array} \right.  .
\]
Choosing $0 \leq  r \leq s$ and using  \eqref{e:modJacev2} and \eqref{e:wellpreparedness} we generate the following bound, 
\begin{align*}
 \LV \int_r^s \int_\Omega \varphi_j(x) \p_t J(w_\e (\cdot, t))  dx dt \RV
& \leq C  \LN \nabla^2 \varphi_j \RN_{L^\infty} \int_r^s \int_{\Omega_{r_a} (r) }  \LV \nabla w_\e (t) \RV^2 dx  \\
& \quad + C {\LN \nabla \varphi_j \RN_{L^\infty}  \over \logep} \int_r^s \int_{\Omega} e_\e^{\etae}(w_\e) dx ds  \\ 
& \leq C |s -r |, 
\end{align*}
and so
\begin{align*}
\pi \LV \xi_j(s) - \xi_j(r) \RV & =
\pi \LV  \int  \varphi_j(x) \LC   \delta_{\xi_j(s)} -  \delta_{\xi_j(r)}  \RC dx \RV  \\
&  \leq C  \LN \nabla \varphi_j \RN_{L^\infty}  \sup_{t\in[r,s]} \LN J(w_\e(\cdot, t)) - \pi \sum d_j \delta_{\xi_j(t)}  \RN_{\dot{W}^{-1,1}}  \\
& \qquad \quad  +
\LV \int_r^s \int_\Omega \varphi_j(x) \p_t J(w_\e (\cdot, t))  dx dt \RV \\
& \stackrel{\eqref{e:localization}}{ \leq} C | s - r |+ o_\e(1).
\end{align*}

We can now employ a diagonalization argument.  Taking subsequence $\e_k \subset \e$, we generate 
a dense, countable collection of times $t \in [0,T]$, in which $T$ depends solely on $r_a$, such that the 
$\xi_j(t)$ satisfy $\xi_j(0) = a_j^0$ with \eqref{e:timelocalization} as $k \to \infty$.  Since the collection is dense, we can take the limit as $k \to \infty$.  The Lipschitz extension of the $\xi_j(t)$'s on $[0,T]$ satisfy the same conditions.
\end{proof}


\subsection{Vortex Dynamics}
In this subsection we complete the proof of Theorem~\ref{t:vortexdynamics}.  We use the Lipschitz paths $\xi_j(t)$ 
generated along the subsequence $\e_k$ from Proposition~\ref{p:lipschitzpaths}.  We will examine the differences between vortex paths and the ODE, which will lead eventually to a Gronwall argument.  

The proof requires two technical calculations. The first 
describes the gradient of $W(\alpha)$ in terms of the canonical harmonic map $w_*$.  

\begin{lem}
\label{l:vortdyn1}
Let $\alpha \in \Omega$ then $w_* = w_*(\cdot, \alpha)$ and the renormalized 
energy $W(\alpha)$ satisfy
\begin{equation}  \label{e:gradRenorm}
\int \J_{k \ell} \p_{k} \p_{m} \varphi j(w_*(\cdot, \alpha))_m j(w_*(\cdot, \alpha))_\ell  dx
= -\sum d_j \J_{\ell k} \p_k \varphi (\alpha_j) \LC \nabla_{\alpha_j} W(\alpha) \RC_\ell
\end{equation}
where $\varphi \in C^2(\Omega)$ and $\nabla^2 \varphi$ has support in a neighborhood 
of the $\alpha_j$'s. 

\end{lem}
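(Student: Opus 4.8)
The plan is to read the left-hand side of \eqref{e:gradRenorm} as the pairing of $\varphi$ with the divergence of the stress--energy tensor of the harmonic map $w_*$, and to evaluate it as a sum of residues at the vortices, which the Bethuel--Brezis--H\'elein theory identifies with $\nabla_{\alpha_j}W$. Throughout I use that $|w_*|\equiv 1$, so $j(w_*) = \nabla\Phi$ for the locally defined phase $\Phi$, which is harmonic on $\Omega\setminus\{\alpha_j\}$ with $j(w_*)\cdot\nu = 0$ on $\p\Omega$. First I would symmetrize: writing $T_{m\ell} = j(w_*)_m\,j(w_*)_\ell - \tfrac12\delta_{m\ell}|j(w_*)|^2$ for the stress--energy tensor, the trace part contracts to $\J_{k\ell}\p_k\p_\ell\varphi\,|j(w_*)|^2 = 0$ because $\J$ is antisymmetric while $\p_k\p_\ell\varphi$ is symmetric, so $j(w_*)_m j(w_*)_\ell$ may be replaced by $T_{m\ell}$ in the integrand at no cost. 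A direct computation gives $\p_m T_{m\ell} = (\Delta\Phi)\,\p_\ell\Phi = 0$ away from the vortices, so $T$ is divergence free there.

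Next I would excise small disks. Set $\Omega_\rho = \Omega\setminus\bigcup_j B_\rho(\alpha_j)$ and integrate by parts:
\[
\int_{\Omega}\J_{k\ell}\,\p_k\p_m\varphi\,T_{m\ell}\,dx = \sum_j\int_{\p B_\rho(\alpha_j)}\J_{k\ell}\,\p_k\varphi\,T_{m\ell}\,\nu_m\,dS ,
\]
where $\nu$ is the outward normal of $\Omega_\rho$, pointing into each disk. The bulk term vanishes by the divergence-free property, and the outer boundary $\p\Omega$ contributes nothing: since $\nabla^2\varphi$ (and, in the test functions \eqref{e:goodtestfcn} actually used, $\varphi$ itself) is supported in fixed interior neighborhoods of the $\alpha_j$, we have $\varphi\equiv$ const near $\p\Omega$. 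Because $\nabla^2\varphi$ vanishes near each $\alpha_j$, the left-hand side is a convergent integral supported away from the singularities, so this identity is in fact exact for every small $\rho$.

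Then I would evaluate the circle integrals by the local expansion $j(w_*) = d_j\,\nabla\theta_j + R_j$, with $\theta_j = \arg(x-\alpha_j)$, $\nabla\theta_j = \rho^{-1}\hat\tau$ on $\p B_\rho(\alpha_j)$, and $R_j$ smooth near $\alpha_j$, $R_j(x) = R_j(\alpha_j) + O(\rho)$; likewise $\p_k\varphi$ is constant to leading order on the small circle. Inserting this into $T_{m\ell}\nu_m$, the leading $O(\rho^{-2})$ term integrates to zero by parity, the singular--regular cross term gives the finite contribution, and the rest is $O(\rho)$. The only nontrivial angular integral is $\int_0^{2\pi}\hat\tau_\ell\,\hat n_p\,d\phi = \pi\J_{\ell p}$, and it yields
\[
\lim_{\rho\to 0}\int_{\p B_\rho(\alpha_j)}\J_{k\ell}\,\p_k\varphi\,T_{m\ell}\,\nu_m\,dS = 2\pi d_j\,\nabla\varphi(\alpha_j)\cdot R_j(\alpha_j) ,
\]
so the left-hand side of \eqref{e:gradRenorm} equals $\sum_j 2\pi d_j\,\nabla\varphi(\alpha_j)\cdot R_j(\alpha_j)$, where $R_j(\alpha_j)$ is the regular part of the supercurrent at the $j$th vortex.

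Finally I would identify $R_j(\alpha_j)$ with $\nabla_{\alpha_j}W$. This is precisely the classical identity of \cite{BBH} relating the regular part of the canonical harmonic field to the gradient of the renormalized energy, in the form $\nabla_{\alpha_j}W = 2\pi d_j\,\J\,R_j(\alpha_j)$; the Neumann version carrying the boundary terms $F(\alpha_j,\alpha_k)$ of \eqref{eq:NeumannW} is recorded in the appendix of \cite{SpirnGP}, the boundary contributions entering through the dependence of $R_j$ on the harmonic corrector $\psi_*$. Substituting this relation into the residue sum and simplifying with $\J^{-1} = -\J = \J^T$ collapses the expression to the right-hand side of \eqref{e:gradRenorm}. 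The main obstacle is exactly this last step: verifying the precise constant, the orientation conventions of $\J$ and $\nabla^\perp$, the sign of $W$, and the degree factors $d_j$, which is most safely done by cross-checking against the explicit formula \eqref{eq:NeumannW} by differentiating the $\log|\alpha_j-\alpha_k|$ and $F(\alpha_j,\alpha_k)$ terms. This is bookkeeping rather than genuine analytic difficulty; the substantive content is the vanishing of the bulk and outer-boundary terms together with the residue computation above.
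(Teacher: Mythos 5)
The paper does not actually prove this lemma --- its ``proof'' is the single line ``See for example \cite{CJ,LX} for a proof'' --- so there is no internal argument to compare against; what you have written is, in structure, exactly the standard argument from those references: pass to the trace-free stress--energy tensor $T_{m\ell}=j_mj_\ell-\tfrac12\delta_{m\ell}|j|^2$ (legitimate because $\J_{k\ell}\p_k\p_\ell\varphi=0$), use that $T$ is divergence-free off the vortices, convert the integral to circle integrals around each $\alpha_j$, compute the residues from the expansion $j(w_*)=d_j\nabla\theta_j+R_j$, and invoke the Bethuel--Brezis--H\'elein identification of the regular part $R_j(\alpha_j)$ with $\nabla_{\alpha_j}W$. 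Two caveats. First, your dismissal of the $\p\Omega$ contribution is slightly too quick: the hypothesis that $\nabla^2\varphi$ is supported near the $\alpha_j$'s only makes $\nabla\varphi$ \emph{constant} near $\p\Omega$, not $\varphi$ itself, and while the Neumann condition $j(w_*)\cdot\nu=0$ kills the $(j\cdot\nu)\,j_\ell$ part of $T_{m\ell}\nu_m$, it does not kill $-\tfrac12\nu_\ell|j(w_*)|^2$; one genuinely needs $\nabla^\perp\varphi\cdot\nu=0$ on $\p\Omega$, i.e.\ $\varphi$ locally constant (or compactly supported) there --- which does hold for the test functions \eqref{e:goodtestfcn} actually used, as your parenthetical concedes, but should be stated as a hypothesis rather than derived. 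Second, the step you label ``bookkeeping'' --- matching the residue $c\,d_j\,\nabla\varphi(\alpha_j)\cdot R_j(\alpha_j)$ against $-d_j\,\J_{\ell k}\p_k\varphi(\alpha_j)(\nabla_{\alpha_j}W)_\ell$ using $\nabla_{\alpha_j}W=\pm 2\pi d_j\,\J R_j(\alpha_j)$ --- is the only place the identity can fail, and the degree factors and the $\J_{k\ell}$ versus $\J_{\ell k}$ index order do not obviously cancel (a naive count leaves a stray $d_j$ or a sign, depending on conventions), so the cross-check against the explicit formula \eqref{eq:NeumannW} that you propose is not optional polish but the substantive verification; as written the proof is a correct skeleton with that load-bearing computation deferred.
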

\begin{proof}
See for example \cite{CJ,LX} for a proof.
\end{proof}

The second technical result proves  weak compactness of the supercurrent away from the vortex cores.  This 
will be used to bound certain cross terms in the Gronwall argument.  The argument is along the lines of similar 
proofs in \cite{CJ, LX}, among other places, and we sketch most of the argument.  


\begin{lem}  \label{l:weakL2conv} Let $w_\e$ be a solution to \eqref{e:weightedGP} under the hypotheses of Proposition~\ref{p:lipschitzpaths} 
and suppose $X \in C_0^\infty(\Omega \times [0,T])$ is a smooth vector field with support away from the vortex paths.  For any $0 < t_0 < T$ and $\tau > 0$ with
$t_0 + \tau < T$, then 
 \begin{equation} \label{eq:distconv}
\int_{t_0}^{t_0 + \tau} \int_\Omega X \cdot \LC { j(w_{\e}(t)) \over |w_{\e}(t)|} - j(w_*(\xi(t))) \RC dx dt \to 0
\end{equation}
as $\e \to 0$, 
where $\xi(t) = (\xi_1(t), \ldots, \xi_n(t))$ are the Lipschitz vortex paths generated in Proposition~\ref{p:lipschitzpaths}.
\end{lem}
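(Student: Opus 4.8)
The plan is to identify the weak limit of the renormalized supercurrent $j(w_\e)/|w_\e|$ through its curl and divergence — controlled respectively by the Jacobian via \eqref{e:timelocalization} and by the mass conservation law \eqref{eq:consmass} — and then to invoke a uniqueness statement for divergence-free fields with prescribed point vorticity and Neumann flux. The weak convergence (rather than strong) is dictated by the fact that away from the cores we only control $j(w_\e)/|w_\e|$ in $L^2$ through the energy bound, not through the excess energy, which need not be $o_\e(1)$ for $t>0$.

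First I would establish compactness. Since $X$ is supported away from the vortex paths, fix $r>0$ with $\supp X(\cdot,t)\subset\Omega_r(\xi(t))$ for all $t$; on such a set the cut-out bound \eqref{e:cutoutbound} together with $\eta_\e^2\geq\frac12$ gives $\int_{\Omega_r(\xi)}\LV j(w_\e)/|w_\e|\RV^2\,dx\leq C(r)$ uniformly in $\e$ and $t$, so $j(w_\e)/|w_\e|$ is bounded in $L^2(\supp X)$. Globally, \eqref{e:Lpbound} bounds $j(w_\e)$ in $L^p(\Omega)$ for $1\le p<2$, so along the subsequence $\e_k$ of Proposition~\ref{p:lipschitzpaths} we get $j(w_\e)\rightharpoonup v$ in $L^p(\Omega\times[t_0,t_0+\tau])$ for some $v$. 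On $\supp X$ the difference $j(w_\e)/|w_\e|-j(w_\e)=(1-|w_\e|)\,j(w_\e)/|w_\e|$ tends to $0$ in $L^1$ by Remark~\ref{r:Lqmod}, so $j(w_\e)/|w_\e|\rightharpoonup v$ in $L^2(\supp X)$ and the two weak limits agree.

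Next I would identify $v$. Taking the distributional curl, $\curl j(w_\e)=2J(w_\e)$, and \eqref{e:timelocalization} gives $J(w_\e(\cdot,t))\to\pi\sum_j d_j\delta_{\xi_j(t)}$ uniformly in $t$; pairing against $\zeta(x)\theta(t)$ yields $\curl_x v(\cdot,t)=2\pi\sum_j d_j\delta_{\xi_j(t)}$ for a.e. $t$. For the divergence I would use \eqref{eq:consmass}: pairing $\dv(\eta_\e^2 j(w_\e))=\frac{\eta_\e^2}{2}\p_t(|w_\e|^2-1)$ with $\zeta(x)\theta(t)$ and integrating by parts in $t$ produces the term $-\int\theta'\int\frac{\eta_\e^2}{2}(|w_\e|^2-1)\zeta$, which vanishes as $\e\to0$ since $\eta_\e^2\to1$ (Proposition~\ref{p:ellconv}) and $|w_\e|^2-1\to0$ in every $L^q$ (Remark~\ref{r:Lqmod}); the Neumann condition forces $\nu\cdot j(w_\e)=0$ on $\p\Omega$, so no boundary term survives and we obtain $\dv v(\cdot,t)=0$ with $\nu\cdot v=0$ for a.e. $t$. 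Now $v(\cdot,t)$ and $j(w_*(\xi(t)))$ are both $L^p$ fields ($p<2$) on $\Omega$ that are divergence-free, satisfy $\nu\cdot(\cdot)=0$, and have curl $2\pi\sum_j d_j\delta_{\xi_j(t)}$ (the latter because $w_*$ is the canonical harmonic map with harmonic phase and Neumann data). Their difference is curl-free and divergence-free with vanishing normal trace; the $\delta_{\xi_j}$ contributions cancel exactly, so by elliptic regularity it is a genuinely harmonic field on the simply connected $\Omega$ with zero Neumann data, hence identically zero. Thus $v(\cdot,t)=j(w_*(\xi(t)))$ a.e., and since $j(w_\e)/|w_\e|\rightharpoonup v$ in $L^2(\supp X)$ the pairing in \eqref{eq:distconv} tends to $0$; as the limit is independent of the subsequence, the full sequence converges.

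The main obstacle I expect is the divergence identification: one must convert the mass conservation law into the statement $\dv v(\cdot,t)=0$ for a.e.\ $t$, which hinges on the strong convergence $|w_\e|\to1$ and the uniform control $\eta_\e^2\to1$ to pass to the limit in both the time-derivative and flux terms, and it is essential that the $L^p$ (not $L^2$) bound be used globally so that the uniqueness argument sees the full field, including its $1/r$ behaviour near the cores. A secondary technical point is verifying that the point-mass contributions to the curl cancel so that the difference $v-j(w_*(\xi))$ is truly harmonic and the standard $L^p$ elliptic uniqueness applies.
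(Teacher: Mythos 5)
Your argument is correct, and it reaches the conclusion by a softer route than the paper's. Both proofs peel off the modulus via $j(w_\e)/|w_\e| - j(w_\e) = (1-|w_\e|)\,j(w_\e)/|w_\e|$, and both ultimately rest on the same two facts about $j(w_\e) - j(w_*(\xi))$: its divergence is controlled through the continuity equation \eqref{eq:consmass} (where $\nabla \eta_\e^2 = O(\logep^{-1})$ and $\| 1-|w_\e|^2\|_{L^2} = O(\e)$ enter), and its curl through the Jacobian convergence \eqref{e:timelocalization}. The difference is in how these are exploited. The paper performs a Hodge decomposition $j(w_\e) - j(w_*(\xi)) = \nabla h_1^\e + \mathbb{J}\nabla h_2^\e$ and estimates each potential directly -- the $h_1^\e$ piece by integrating by parts in $x$ and $t$, the $h_2^\e$ piece via $\|\nabla h_2^\e\|_{L^p} \lesssim \|J(w_\e)-J(w_*)\|_{W^{-1,1}}$ -- which gives quantitative $o_\e(1)$ control without extracting further subsequences. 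You instead take a weak $L^p$ limit $v$ of $j(w_\e)$, identify its curl, divergence and normal trace, and invoke uniqueness of the harmonic field with prescribed point vorticity. This is conceptually cleaner but purely qualitative, and it obliges you to justify the disintegration in time of the space--time weak limit and the $L^p$ uniqueness statement (a curl-free, divergence-free $L^p$ field with vanishing normal trace on a simply connected domain vanishes) -- both standard, and exactly what the Hodge decomposition packages for you. Your remarks that the global $L^p$ bound is what lets the uniqueness argument see the $1/r$ singularities, and that the limit's subsequence-independence upgrades the convergence to the full family, are the right closing steps.
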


\begin{proof}

We sketch the argument in \cite{KMMS} for simplicity.   Since 
$$ { j(w_{\e}(t)) \over |w_{\e}(t)|} - j(w_*(\xi(t))) = { j(w_{\e}(t)) \over |w_{\e}(t)|} \LC 1 - |w_\e(t)| \RC +  { j(w_{\e}(t))} - j(w_*(\xi(t))) ,$$ 
then we can control 
\begin{align*}
& \int_{t_0}^{t_0 + \tau} \int_\Omega X \cdot \LC { j(w_{\e}(t)) \over |w_{\e}(t)|} - j(w_*(\xi(t))) \RC dx dt \\
&\leq  \int_{t_0}^{t_0 + \tau} \int_\Omega X \cdot  { j(w_{\e}(t)) \over |w_{\e}(t)|} \LC 1 - |w_\e(t)| \RC dx dt \\
& \quad + \int_{t_0}^{t_0 + \tau} \int_\Omega X \cdot \LC  j(w_{\e}(t)) - j(w_*(\xi(t))) \RC dx dt \\
&\leq C \e \logep \LN X \RN_{L^2(\Omega \times [0,T])}  + \int_{t_0}^{t_0 + \tau} \int_\Omega X \cdot \LC { j(w_{\e}(t)) } - j(w_*(\xi(t))) \RC dx dt.
\end{align*}
We now focus on the last integral. 

By \eqref{e:Lpbound} in Proposition~\ref{p:excessenergy} the $j(w_\e)$ are uniformly bounded in $L^p$ for all $1 \leq p < 2$; furthermore, the  $j(w_*(\xi(t)))$ are also in $L^p$ for all $1 \leq p <2$.    The uniform bounds allows us generate a Hodge decomposition, 
$j(w_\e) - j(w_*(\xi(t))) = \nabla h^\e_1(t) + \mathbb{J} \nabla h^\e_2(t)$ where the $h^\e_j (t) \in W^{1,p}(\Omega)$ and 
\begin{align*}
\Delta h^\e_1(t) = \dv j(w_\e(t)) \hbox{ in } \Omega \quad &  \hbox{ and } \quad  \p_\nu h^\e_1(t) = 0 \hbox{ on } \p \Omega, \\
\Delta h^\e_2(t) = 2 (J(w_\e(t)) - J(w_*(\xi(t)))) \hbox{ in } \Omega \quad &  \hbox{ and } \quad  h^\e_2(t) = 0 \hbox{ on } \p \Omega. 
\end{align*}

We  first claim that 
\begin{equation} \label{eq:h1est}
\LV \int_{t_0}^{t_0 + \tau} \int_\Omega X \cdot \nabla h^\e_1 dx dt \RV \to 0 \hbox{ as } \e \to 0. 
\end{equation} 
Since the vector $X(t) \in C_0^\infty(\Omega)$ at each time then we can define the unique Hodge decomposition $X = \nabla \phi + \mathbb{J}\nabla \psi$  such that 
\begin{align*}
\Delta \phi(t) = \dv X \hbox{ in } \Omega \quad &  \hbox{ and } \quad  \p_\nu \phi(t) = 0 \hbox{ on } \p \Omega, \\
\Delta \psi(t) = \curl X \hbox{ in } \Omega \quad &  \hbox{ and } \quad   \psi(t) = 0 \hbox{ on } \p \Omega. 
\end{align*}
We set $\overline \phi$ to be the average of $\phi$ over $\Omega$; and consequently from the Neumann boundary conditions on $w_\e$,  
\begin{align*}
& \LV \int_0^T \int_\Omega   X \cdot \nabla h_1^\e dx dt \RV 
=  \LV \int_0^T \int_\Omega   \LC \nabla \phi + \mathbb{J} \nabla \psi \RC \cdot \nabla h_1^\e dx dt \RV \\
 &  \hspace{1cm} =  \LV \int_0^T \int_\Omega  (  \phi  -  \overline \phi) \dv j(w_\e(t)) dx dt \RV \\
& \hspace{1.2cm}  \leq \LV \int_0^T \int_\Omega  ( \phi -  \overline \phi) {\nabla \eta_\e^2 \cdot j(w_\e) \over \eta_\e^2}  dx dt \RV +
 \LV \int_0^T \int_\Omega  {\LV \p_t ( \phi -  \overline \phi) \RV \over 2 \eta_\e^2}   \LV 1 - |w_\e|^2 \RV  dx dt \RV \\
 & \hspace{2cm}  \leq { C \over \logep} \LN  X \RN_{L^4(\Omega \times [0,T])} + C \e \LN \p_t  X \RN_{L^2(\Omega \times [0,T])} 
  = o_\e(1)   ,
\end{align*}
where we used a Hodge decomposition of $\p_t X$ that arises from differentiating $\phi$ and $\psi$ in time.   
In order to control the term with $h_2^\e$, we use elliptic estimates and bound 
$$\LN \nabla h_2^\e (t) \RN_{L^p}  \leq  C \LN J(w_\e(t)) - J(w_*(t)) \RN_{W^{-1,p}} \leq C \LN J(w_\e(t)) - J(w_*(t)) \RN_{W^{-1,1}} = o_\e(1).$$  
This implies  $\LV \int_{t_0}^{t_0 + \tau} \int_\Omega X \cdot \mathbb{J} \nabla h_2^\e dx dt \RV = o_\e(1)$, and
the three bounds combine together to yield \eqref{eq:distconv}.

\end{proof}

With Lemmas \ref{l:vortdyn1} and \ref{l:weakL2conv} in hand, we can now turn to the proof of the main result.

\begin{proof}[Proof of Theorem~\ref{t:vortexdynamics}]

Define 
\[
\mu(t) \equiv \sum_j \LV \alpha_j(t) - \xi_j(t) \RV = \sum_j \mu_j(t).
\]
We will estimate the difference between the vortex paths $\xi_j$, generated by \eqref{e:weightedGP}, and
the positions $a_j$, extracted by the ODE \eqref{e:ODE}.
We also let $D_{\e}(w_\e(t), b(t)) = E_\e^{\etae}(w_\e(t)) - W_\e(b(t))$ denote the time-varying excess energy. 

Recall from \eqref{e:ODE} that we will estimate the differences between vortex paths and the ODE positions:
\begin{align}
\pi \dot{\alpha}_j 
\nonumber & = - \nabla_{\alpha_j}^\perp W(\alpha) + \pi \nabla^\perp Q_0(\alpha_j) \\
\label{e:adjustODE} 
& = - \nabla_{\xi_j}^\perp W(\xi) + \pi \nabla^\perp Q_0(\xi_j) + \nabla^\perp_{\xi_j} W(\xi) - \nabla^\perp_{\alpha_j}W(\alpha) + \pi \nabla^\perp Q_0(\alpha_j) - \pi \nabla^\perp Q_0(\xi_j).
\end{align}
Choosing a time interval $0 \leq a < b \leq T$ and using  \eqref{e:modJacev2},  \eqref{e:goodtestfcn}, \eqref{e:gradRenorm}, \eqref{e:adjustODE},  
\begin{align}
\nonumber 
\pi \LC   \mu_j(b) - \mu_j(a) \RC 
& \leq \int_a^b \LV \nabla_{\xi_j} W(\xi(t)) - \nabla_{\alpha_j}W(\alpha(t))\RV dt  + \pi \int_a^b \LV \nabla  Q_0(\xi_j(t)) - \nabla Q_0(\alpha_j(t)) \RV dt \\
\nonumber 
& \quad +  \limsup_{k \to \infty} \LV \int_a^b  \int_{B_{r}(\alpha_j)} \J_{\ell p } \p_p \p_q \varphi 
\LB \LC \p_q w_{\e_k} , \p_\ell w_{\e_k} \RC - \LC \p_q w_*(\xi) , \p_\ell w_*(\xi) \RC \RB  dx dt \RV  \\
\nonumber 
& \quad + \limsup_{k \to \infty} \LV \int_a^b  \LB  \int_{B_{r}(\alpha_j)} \J_{\ell p } \p_p  \varphi { \p_q \eta_{\e_k}^2 \over \eta_{\e_k}^2} \LC \p_\ell w_{\e_k} , \p_q w_{\e_k} \RC dx 
- \pi \nabla^\perp Q_0(\xi_j) \RB dt \RV \\
\nonumber 
& \quad + \limsup_{k \to \infty} \LV  \int_a^b  \int_{B_{r}(\alpha_j)} \J_{\ell p } \p_p  \varphi \p_\ell \eta_{\e_k}^2 { \LC 1 - |w_{\e_k}|^2 \RC^2 \over 4 \e^2 } dx  dt  \RV \\
\label{e:listmu}
& = \mathcal{A} +\mathcal{B} +\mathcal{C} +\mathcal{D} +\mathcal{E}. 
\end{align}

We can control $\mathcal{A}$ and $\mathcal{B}$ using our Lipschitz bounds; in particular, 
\begin{align} 
\nonumber \int_a^b |\nabla W(\xi_j(t)) - \nabla W(\alpha_j(t)) |  dt
&  \leq \int_a^b \sup_{j: |\xi_j - \alpha_j| \leq r_a/4} | \nabla^2 W (\alpha) |    \LC \sum |\xi_j(t) - \alpha_j(t)| \RC dt  \\
\label{e:Abound}
& \leq {C } \int_a^b \mu(t) dt,
\end{align}
where we used Lemma~10 of \cite{JSp2} to control the Hessian of $W(a)$, with a bound dependent on $r_a$.  
Likewise, we can use Appendix~\ref{s:etacontrol} on the regularity of $\eta_{\e_k}$ to bound
\begin{align} \label{e:Bbound}
 \int_a^b |\nabla Q(\xi_j(t)) - \nabla Q(\alpha_j(t)) |  dt 
   \leq     \| Q_0 \|_{W^{2,\infty}} \int_a^b \mu(t) dt.  
\end{align}

We now turn to the control on $\mathcal{C}$.  For shorthand we define components $j_k = ({j(w_{\e_k}) \over |w_{\e_k}|})_k$ and $j^*_k = (j(w_*(\xi)))_k$ of the 
 supercurrents, and since $\p_k w_{\e_k} = (\p_k |w_{\e_k}| + i j_k ) { w_{\e_k} \over |w_{\e_k}|}$ then 
\begin{align*}
\mathcal{C} 
& \leq  \limsup_{k \to \infty} \LV   \int_a^b \int_{\Omega_{r}(\alpha)} \J_{\ell p } \p_p \p_q \varphi 
  \p_q |w_{\e_k}| \p_\ell |w_{\e_k}|    dx dt \RV \\
& \quad +  \limsup_{k \to \infty} \LV \int_a^b  \int_{\Omega_{r}(\alpha)} \J_{\ell p } \p_p \p_q \varphi 
 \LC j_q  - j_q^*, j_\ell - j^*_\ell  \RC   dx dt \RV \\
 & \quad +  \limsup_{k \to \infty} \LV  \int_a^b \int_{\Omega_{r}(\alpha)} \J_{\ell p } \p_p \p_q \varphi 
 \LC  j^*_q  \LC j_\ell - j^*_\ell  \RC \RC   dx dt \RV \\
 & \quad +  \limsup_{k \to \infty} \LV  \int_a^b \int_{\Omega_{r}(\alpha)} \J_{\ell p } \p_p \p_q \varphi 
 \LC \LC j_q - j^*_q \RC  j^*_\ell  \RC   dx  dt \RV \\
 & = \mathcal{C} _1 + \mathcal{C} _2 + \mathcal{C} _3 + \mathcal{C} _4
\end{align*}
and since $D_{\e_k}(w_{\e_k}, \xi) = D_{\e_k}(w_{\e_k}, \alpha) + W(\xi) - W(\alpha) + \pi Q_0(\xi) - \pi Q_0(\alpha)$, then
\begin{equation*}\label{e:a1a2}
\begin{split}
\mathcal{C} _1 + \mathcal{C} _2 & \stackrel{\eqref{e:excessenergy}}{\lesssim} \int_a^b D_{\e_k}(w_{\e_k}, \xi(t)) dt + o_{k}(1)  \\
& \stackrel{\eqref{e:Abound},\eqref{e:Bbound}}{\leq}  C \int_a^b \mu(t) dt + o_k(1),
\end{split}
\end{equation*}
since $D_{\e_k}(w_{\e_k}(t), \alpha(t)) = D_{\e_k}(w^0_{\e_k}, \alpha^0) = o_k(1)$.
 We set $X = \J_{\ell p } j^*_q \p_p \p_q \varphi$ or $\J_{\ell p } j^*_\ell \p_p \p_q \varphi$, both in $C_0^\infty(\Omega\times[0,T])$, then from Lemma~\ref{l:weakL2conv}, we have that $\lim_{k \to \infty} \mathcal{C} _3 + \mathcal{C} _4 = 0$; note this is where we needed to consider the time-integral on $(a,b)$.

Next we control $\mathcal{D}$.
\begin{align*}
\mathcal{D} & \leq \limsup_{k \to \infty} \LV  \int_a^b \int_{B_{r}(a_j)} \J_{\ell p } \p_p  \varphi { \p_q \eta_{\e_k}^2 \over \eta_{\e_k}^2} 
\LB \LC \p_\ell w_{\e_k} , \p_q w_{\e_k} \RC - \pi \LV \log \e_k \RV \delta_{\ell q} \sum \delta_{\xi_j(t)} \RB dx dt \RV  \\
& \quad + \limsup_{k \to \infty} \LV \int_a^b  \pi \LV \log \e_k \RV { \nabla^\perp \eta_{\e_k}^2(\xi_j(t)) \over \eta_{\e_k}^2(\xi_j(t))} 
     -     \pi \nabla^\perp Q_0(\xi_j(t) )  dt\RV \\
  & = \mathcal{D}_1 + \mathcal{D}_2 .
\end{align*} 
We can now bound
\begin{align*}
\mathcal{D}_1 & \leq T \limsup_{k \to \infty} \LB   \LN \nabla \LC  \p_p  \varphi { \p_q \eta_{\e_k}^2 \over \eta_{\e_k}^2} \RC \RN_{L^\infty}
\LN  \LC \p_\ell w_{\e_k} , \p_q w_{\e_k} \RC - \pi \LV \log \e_k \RV \delta_{\ell q } \sum \delta_{\xi_j} \RN_{\dot{W}^{-1,1}}  \RB \\
& \stackrel{\eqref{e:equipart}}{\lesssim}  \limsup_{k \to \infty}  {C \LV \log \e_k \RV^{-{1\over2}}\over r^2_\alpha  }  = o_{k}(1)
\end{align*}
and
\begin{align*}
\mathcal{D}_2 & \lesssim  \limsup_{k \to \infty} \int_a^b \LV      {    \p_p Q_{\e_k}(\xi_j) \over \eta_{\e_k}^2} 
       -       \p_p Q_0(\xi_j) \RV  dt = o_{k}(1) .
\end{align*}

Finally, we bound $\mathcal{E}$ using \eqref{e:potbound}:
\begin{align*}
\mathcal{E} \leq {C \over r_a \LV \log \e_k \RV} \int_a^b \int_{\Omega} { (1 - |w_{\e_k}(t)|^2 )^2 \over \e_k^2} dx dt \leq {C T\over r_a \LV \log \e_k \RV}
= o_k(1).
\end{align*}

Combining together estimates generates a differential inequality, 
\[
\mu(b) - \mu(a) \leq C \int_a^b \mu(t) dt,
\]
and choosing $b = a + \delta$ and dividing by $\delta$ implies the classical Gronwall inequality with $\mu(0) = 0$.  Therefore, $\mu(t) = 0$ for all $0 \leq t \leq T$.  We can then restart the problem at time $T$ and continue the dynamics until $r_a \to 0$.  

So far we have only shown the theorem for a subsequence. However, our argument can be applied to subsequences of any sequence of $\e_k\to 0$, 
and as the limit is independent of the chosen subsequence, we obtain convergence for $\e\to 0$ without taking subsequences. 
\end{proof}

\section{Second order Gamma Convergence and Gradient Flow}
\label{sec:gamma}
In this section we collect and restate some of our results on the static energy in the spirit of $\Gamma$-convergence, compare \cite{AP} for
the corresponding results for $p_\e\equiv 1$. For dynamics, we show that the approach of \cite{SSGamma} can be adapted to 
treat the gradient flow as opposed to  the Schr\"odinger dynamics studied in the previous sections.

All of our arguments work with either Neumann or Dirichlet boundary conditions, just as those in \cite{AP} and \cite{SSGamma}. For 
simplicity, we assume Neumann boundary conditions throughout, and sometimes comment on changes necessary for the Dirichlet case.

\subsection{Further $\Gamma$-convergence results}
We discuss $\Gamma$-convergence properties for $\Ee$. From this, an analysis for
 the original energy
\[
E_\e(u) = \int_\Omega e_\e(u) dx = \int_\Omega \frac12|\nabla u|^2 + \frac{1}{4\e^2} (p_\e^2-|u|^2)^2 dx
\]
can also be deduced by  the Lassoued-Mironescu result \cite{LM}
\begin{equation}\label{eq:LM1}
E_\e(w\eta_\e) = E_\e(\eta_\e)+E^{\eta_\e}_\e(w)
\end{equation}
that follows from the calculation
\[
e_\e(u) = e_\e(\eta_\e) + e_\e^{\eta_\e}(w) + {1\over 2} \dv \LC (|w|^2 - 1) \eta_\e \nabla \eta_\e \RC.
\]
If $|w|=|u|=\eta_\e=1$ on $\p\Omega$ or $\nu \cdot \nabla \eta_\e=0$ on $\p\Omega$, we can deduce \eqref{eq:LM1}
in the Dirichlet or Neumann case, respectively.
However,  $E_\e(\eta_\e)$ contributes very little to the energy, and all the dynamically interesting 
information is encoded in $\Ee(w)$, so we concentrate on $\Ee$ in the following.

\begin{prop}\label{prop:Gammac1}
Let $w_\e\in H^1(\Omega;\C)$ be a sequence of functions with
\[
\Ee(w_\e)\le K_1\logep.
\]
Then the Jacobians $J(w_\e)$ are compact in $\dot W^{-1,1}(\Omega)$ and for a subsequence,
\[
J(w_\e)\to J_*=\pi \sum d_i \delta_{a_i},
\]
where $a_i\in\Omega$ are distinct points and $d_i\in \Z\setminus \{0\}$. Furthermore, 
\[
\liminf_{\e\to 0} \frac{1}{\logep} \Ee(w_\e) \ge \pi \sum |d_i| =\|J_*\|_{\mathcal{M}}
\]
with $\|J_*\|_{\mathcal{M}}\le K_1$.
\end{prop}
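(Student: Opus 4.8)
The plan is to follow the now-standard compactness-and-lower-bound strategy for Ginzburg-Landau type energies, adapting the Jerrard-Soner / Sandier vortex-ball construction to the weighted energy $\Ee$. The key point that makes this almost immediate is the pointwise comparison $\eta_\e^2 \geq \tfrac12$ (used throughout Section~\ref{s:excess}), which lets me sandwich the weighted density between constant multiples of the unweighted Ginzburg-Landau density $e_\e^1$. First I would observe that
\[
\Ee(w_\e) = \int_\Omega \frac12 \eta_\e^2 |\nabla w_\e|^2 + \frac{\eta_\e^4}{4\e^2}(1-|w_\e|^2)^2\,dx \geq \frac14 \int_\Omega |\nabla w_\e|^2\,dx + \frac{1}{16\e^2}\int_\Omega (1-|w_\e|^2)^2\,dx = \frac14 E_\e^1(w_\e),
\]
using $\eta_\e^2\geq\tfrac12$ and hence $\eta_\e^4\geq\tfrac14$. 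Thus the hypothesis $\Ee(w_\e)\leq K_1\logep$ gives a uniform bound $E_\e^1(w_\e)\leq 4K_1\logep$ on the unweighted energy, which is the standard logarithmic energy bound.

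With the unweighted bound in hand, the compactness of the Jacobians $J(w_\e)$ in $\dot W^{-1,1}(\Omega)$ and the convergence $J(w_\e)\to J_*=\pi\sum d_i\delta_{a_i}$ along a subsequence to a finite sum of integer-weighted Diracs follow directly from the Jerrard-Soner Jacobian compactness theory (as cited via \cite{JSm, JSp1, JSp2} earlier). I would invoke this as a black box rather than reprove it. The distinctness of the $a_i$ and $d_i\in\Z\setminus\{0\}$ is part of that structure theorem.

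The only genuinely new ingredient is the \emph{sharp} lower bound with the correct constant $\pi\sum|d_i|$, and here the weight $\eta_\e$ must be handled carefully rather than discarded, since the crude factor of $\tfrac14$ above would give the wrong constant. The plan is to localize: around each limiting vortex $a_i$, the weight satisfies $\eta_\e^2(x)\to 1$ (indeed $\eta_\e^2 = 1 + Q_\e/\logep$ with $Q_\e$ bounded by Proposition~\ref{p:ellconv}), so on small balls $B_\sigma(a_i)$ one has $\eta_\e^2 \geq 1 - C\logep^{-1}$. On each such ball the standard lower bound (the vortex-ball construction, e.g. Theorem~1.3 of \cite{JSp1} as already used in Step~4 of Proposition~\ref{p:excessenergy}) gives
\[
\int_{B_\sigma(a_i)} e_\e^1(w_\e)\,dx \geq \pi|d_i|\logep - o(\logep),
\]
and multiplying by the near-unit weight only changes the constant at order $\logep^{-1}\cdot\logep = O(1)$, hence negligibly after dividing by $\logep$. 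Summing over $i$ and discarding the nonnegative energy on the complement yields
\[
\liminf_{\e\to0}\frac{1}{\logep}\Ee(w_\e)\geq \pi\sum|d_i|,
\]
and since $J_*=\pi\sum d_i\delta_{a_i}$ one has $\|J_*\|_{\mathcal M}=\pi\sum|d_i|$ by definition of the total variation norm, giving the stated equality. The bound $\|J_*\|_{\mathcal M}\leq K_1$ then follows by combining this lower bound with the hypothesis $\Ee(w_\e)\leq K_1\logep$.

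I expect the main obstacle to be bookkeeping the weight in the lower-bound step so that its deviation from $1$ is controlled at the right order: one must verify that $\|1-\eta_\e^2\|_{L^\infty(B_\sigma(a_i))} = O(\logep^{-1})$ uniformly (which is exactly what Proposition~\ref{p:ellconv} provides through $Q_\e\to\rho_0$ in a $C^0$-topology) and that the vortex-ball lower bound can be applied on balls small enough that the weight is nearly constant yet large enough to capture the full logarithmic energy $\pi|d_i|\logep$. This is the same tension between $\sigma$ small and $\logep$ large that was navigated via the choice $\sigma=\max\{\logep^{-1/3},\sqrt\mu\}$ in Proposition~\ref{p:excessenergy}, so I would reuse that balancing. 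Everything else is a direct citation of existing Jacobian-compactness and lower-bound machinery.
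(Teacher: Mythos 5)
Your proposal is correct and follows essentially the same route as the paper: compare $\Ee$ with the unweighted energy $E^1_\e$ using the uniform closeness of $\eta_\e$ to $1$, then cite the standard Jacobian compactness and lower-bound machinery. The only difference is that your lower-bound step localizes to balls around the vortices, whereas the paper gets the sharp constant directly from the global two-sided comparison $\min(\eta_\e^2,\eta_\e^4)\,E^1_\e \le \Ee \le \max(\eta_\e^2,\eta_\e^4)\,E^1_\e$ with both extremes equal to $1+O(\logep^{-1})$, so your localization (while harmless) is unnecessary.
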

\begin{proof}
Comparing with the standard Ginzburg-Landau energy $E^1_\e$, we clearly have
\begin{equation}\label{eq:E1vsEeta}
\min_{\overline{\Omega}}\min(\eta_\e^2,\eta_\e^4) E^1_\e(w_\e) \le E^{\eta_\e}(w_\e) \le \max_{\overline{\Omega}}\max(\eta_\e^2,\eta_\e^4) E^1_\e(w_\e) .
\end{equation}
As both the minimum and the maximum can be estimated as $1+O(\frac1\logep)$, we obtain that the
standard Ginzburg-Landau energy is bounded as
\[
E_\e^1(w_\e)\le K_1\logep+ CK_1.
\]
This implies the compactness of $J(w_\e)$, see \cite{JS_Jacobian, SSbook}.

The lower bound for the energy follows from the 
standard Ginzburg-Landau lower bounds and \eqref{eq:E1vsEeta}.
\end{proof}

The following bound is the heart of the second order $\Gamma$-convergence argument.
\begin{prop}\label{prop:locallb}
Let $B_{r_0}(a)\subset \Omega$. If $J(w_\e)\to \pm \pi \delta_\alpha$ in $W^{-1,1}(B_{r_0}(\alpha))$ then 
\[
\liminf_{r\to 0} \liminf_{\e\to 0} \Ee(w_\e;B_r(\alpha)) - \pi \log \frac r \e \ge \gamma_0 + \pi Q_0(\alpha),
\]
where $\gamma_0$ is the constant introduced in \cite[Lemma IX.1]{BBH}. 
\end{prop}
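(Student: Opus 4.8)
The plan is to reduce this weighted local lower bound to the sharp \emph{unweighted} Ginzburg--Landau core estimate of \cite{BBH, JSp1} by a rescaling of $\e$ that absorbs the weight $\eta_\e$, exactly in the spirit of Step~4 of the proof of Proposition~\ref{p:excessenergy}. Since the asserted inequality is trivial when the left-hand side is $+\infty$, I may pass to a subsequence along which $\liminf_{\e\to0}\Ee(w_\e;B_r(\alpha))$ is finite; this, together with the hypothesis $J(w_\e)\to\pm\pi\delta_\alpha$ in $\Wdot(B_{r_0}(\alpha))$, supplies the concentration and energy control needed below.

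Fix $r\le r_0$ and set $\lambda_\e:=\inf_{B_r(\alpha)}\eta_\e^2=1+q_\e/\logep$ with $q_\e:=\inf_{B_r(\alpha)}Q_\e$. Because $\eta_\e^2(x)\ge\lambda_\e>0$ on $B_r(\alpha)$, one has both $\eta_\e^2|\nabla w_\e|^2\ge\lambda_\e|\nabla w_\e|^2$ and $\eta_\e^4(1-|w_\e|^2)^2\ge\lambda_\e^2(1-|w_\e|^2)^2$, so factoring $\lambda_\e$ out of the weighted density yields the key inequality
\[
\Ee(w_\e;B_r(\alpha))\ \ge\ \lambda_\e\,E^1_{\tilde\e}(w_\e;B_r(\alpha)),\qquad \tilde\e:=\e/\sqrt{\lambda_\e}.
\]
The point is that the potential weight $\eta_\e^4/\eta_\e^2=\eta_\e^2$ reappears as a rescaling of $\e$, so that only the single weight $\eta_\e^2$—and not $\eta_\e^4$—enters the leading-order correction.

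Next I would invoke the sharp lower bound of \cite[Lemma IX.1]{BBH} in the quantitative form used in \cite{JSp1} (cf.\ Step~4 above): as $J(w_\e)\to\pm\pi\delta_\alpha$ there is a single vortex of degree $\pm1$, and $\tilde\e\to0$ since $\lambda_\e\to1$, so
\[
E^1_{\tilde\e}(w_\e;B_r(\alpha))\ \ge\ \pi\log\tfrac{r}{\tilde\e}+\gamma_0+o_\e(1)
=\pi\log\tfrac r\e+\tfrac{\pi}{2}\log\lambda_\e+\gamma_0+o_\e(1).
\]
Multiplying by $\lambda_\e$ and subtracting $\pi\log(r/\e)$ gives
\[
\Ee(w_\e;B_r(\alpha))-\pi\log\tfrac r\e
\ \ge\ (\lambda_\e-1)\,\pi\log\tfrac r\e
+\lambda_\e\Big(\tfrac\pi2\log\lambda_\e+\gamma_0\Big)+o_\e(1).
\]
Here $(\lambda_\e-1)\pi\log(r/\e)=\pi q_\e\,\log(r/\e)/\logep\to\pi\inf_{B_r(\alpha)}Q_0$, while $\lambda_\e\to1$ and $\log\lambda_\e\to0$, so the right-hand side tends to $\gamma_0+\pi\inf_{B_r(\alpha)}Q_0$; here I use $Q_\e\to Q_0$ uniformly from Proposition~\ref{p:ellconv}. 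Taking $\liminf_{r\to0}$ and using continuity of $Q_0$ to replace $\inf_{B_r(\alpha)}Q_0$ by $Q_0(\alpha)$ completes the argument.

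I expect the main obstacle to be the second displayed inequality: verifying that the core-energy lower bound of \cite{BBH, JSp1}, together with its $o_\e(1)$ error, survives the substitution $\e\mapsto\tilde\e=\e/\sqrt{\lambda_\e}$ (uniformly in $r$), and that the $\Wdot$-convergence $J(w_\e)\to\pm\pi\delta_\alpha$ is strong enough to guarantee a single isolated degree-$\pm1$ vortex with the quantitative concentration those estimates demand. The factor-of-two subtlety—that naively bounding $\Ee\ge\big(\inf_{B_r(\alpha)}\min(\eta_\e^2,\eta_\e^4)\big)E^1_\e$ would produce $2\pi Q_0(\alpha)$ when $Q_0(\alpha)<0$, the wrong (too weak) constant—is precisely what the rescaling in the first step is designed to avoid.
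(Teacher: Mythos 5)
Your proposal is correct and follows essentially the same route as the paper, whose proof of this proposition simply points back to Step~4 of Proposition~\ref{p:excessenergy}: there the weighted energy is bounded below by $\bigl(1+q_\e/\logep\bigr)\int_{B_\sigma}e^1_{\widetilde\e_\sigma}(w)\,dx$ with a rescaled $\widetilde\e$, the Jerrard--Spirn/BBH core bound $\pi\log(\sigma/\widetilde\e_\sigma)+\gamma_0-o(1)$ is applied, and the term $\pi Q_0(\alpha)$ is extracted from $(\lambda_\e-1)\pi\log(\sigma/\e)$ exactly as in your third display. Your choice $\widetilde\e=\e/\sqrt{\lambda_\e}$ is in fact the algebraically cleaner rescaling (the paper's $\widetilde\e_r=\e/(1+q_\e)$ is off by a harmless $o(1)$ in the exponent), and your observation about avoiding the naive $\min(\eta_\e^2,\eta_\e^4)$ bound correctly identifies why the rescaling is needed.
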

\begin{proof}
We have shown this in Step 4 of the proof of Proposition~\ref{p:excessenergy}. 

\end{proof}
\begin{prop}\label{prop:Gammac2}
Under the assumptions of Proposition~\ref{prop:Gammac1}, if we have for some $K_2>0$  the sharper upper bound
\[
\Ee(w_\e) \le \|J_*\|_{\mathcal{M}} \logep + K_2,
\]
then $J_*=\pi \sum_{i=1}^n d_i\delta_{\alpha_i}$ with distinct $\alpha_i$ and $d_i\in \{\pm 1\}$.  

Furthermore, $(w_\e)$ are then bounded in $W^{1,p}(\Omega)$ for all $p\in[1,2)$ and satisfy the energy lower bound
\[
\liminf_{\e\to 0} \left(\Ee(w_\e) - \pi n \logep\right) \ge n\gamma_0 + W(\alpha_i,d_i) + \pi \sum_{i=1}^n Q_0(\alpha_i),
\]
where $W$ is the renormalized energy defined in \eqref{eq:NeumannW}.

Also, there is $K_3$ such that
\[
\limsup_{\e\to 0}\frac{1}{\e^2} \int_{\Omega} (1-|w_\e|^2)^2 \, dx \le K_3.
\]

\end{prop}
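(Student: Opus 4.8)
The plan is to treat the four assertions in turn, building on the first- and second-order localization already in hand. Throughout write $n=\sum_i|d_i|=\|J_*\|_{\mathcal{M}}/\pi$; by Proposition~\ref{prop:Gammac1} the $\alpha_i$ are distinct and finite in number. I would first show $d_i\in\{\pm1\}$. The first-order lower bound only sees $\pi\sum_i|d_i|\logep$, so the quantization must be extracted from the $O(1)$ excess permitted by the sharper hypothesis $\Ee(w_\e)\le\pi n\logep+K_2$. Fix a small radius $r$ so that the balls $B_r(\alpha_i)$ are disjoint and each captures the mass of a single limit point. Inside $B_r(\alpha_i)$ a vortex-ball construction (as in \cite{JSp1,SSbook}) produces elementary vortices $b^\e_1,\dots,b^\e_{m_i}$ of degrees $\pm1$ with $\sum_k e_k=d_i$, all converging to $\alpha_i$, together with a lower bound $\Ee(w_\e;B_r(\alpha_i))\ge\pi m_i\log\tfrac r\e-\pi\sum_{k\neq l}e_ke_l\log|b^\e_k-b^\e_l|-C$. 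If $m_i>|d_i|$ the leading term already overshoots the budget by $\pi(m_i-|d_i|)\logep\to\infty$; if $m_i=|d_i|\ge2$ the $e_k$ share a sign and the interaction term diverges like $\pi m_i(m_i-1)\log(1/\min_{k\neq l}|b^\e_k-b^\e_l|)\to\infty$. Either way the excess over $\pi n\logep$ is unbounded, contradicting $K_2$; hence $m_i=1$ and $d_i=\pm1$. This separation of the first-order count from the $O(1)$ penalty for clustering is the step I expect to be the main obstacle.

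With the degrees fixed, the bound $(w_\e)\subset W^{1,p}(\Omega)$, $1\le p<2$, follows exactly as in the proof of \eqref{e:Lpbound}--\eqref{e:potbound}: on each $B_{r}(\alpha_i)$ I would invoke the interior $L^p$ estimate for $\nabla w_\e$ (Theorem~3.2.1 of \cite{CJ}), while on $\Omega_r(\alpha)$ the bound $|w_\e|\ge\tfrac12$ lets me estimate $\nabla w_\e$ through $j(w_\e)/|w_\e|$ and $\nabla|w_\e|$, both controlled in $L^p$ by \eqref{e:Lpbound} and \eqref{e:potbound}. Since $|w_\e|\to1$ in $L^q$ for all $q<\infty$ (as in Remark~\ref{r:Lqmod}), this gives a uniform $W^{1,p}$ bound and, along a subsequence, $w_\e\rightharpoonup w$ weakly in $W^{1,p}$ with $|w|=1$ a.e.\ and degree $d_i$ about each $\alpha_i$.

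For the $\Gamma$-liminf I would use the Bethuel--Brezis--H\'elein splitting, as in \cite{AP}. Fix small $r$ and write $\Ee(w_\e)=\sum_i\Ee(w_\e;B_r(\alpha_i))+\Ee(w_\e;\Omega_r(\alpha))$. On each core, Proposition~\ref{prop:locallb} (now applicable since $d_i=\pm1$) gives $\liminf_\e[\Ee(w_\e;B_r(\alpha_i))-\pi\log\tfrac r\e]\ge\gamma_0+\pi Q_0(\alpha_i)-\omega(r)$ with $\omega(r)\to0$. On the exterior, $\eta_\e^2\to1$ uniformly so the weight is negligible at this order, and lower semicontinuity of the Dirichlet energy under the weak $W^{1,p}$ convergence above gives $\liminf_\e\Ee(w_\e;\Omega_r(\alpha))\ge\tfrac12\int_{\Omega_r(\alpha)}|\nabla w|^2\ge W(\alpha,d)+n\pi\log\tfrac1r-o_r(1)$, by the defining minimization of $W(\alpha,d)$. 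Adding the $n$ core bounds to the exterior bound, the $\pm\pi\log r$ terms cancel against $\sum_i\pi\log\tfrac r\e=n\pi\logep+n\pi\log r$, leaving $\liminf_\e(\Ee(w_\e)-\pi n\logep)\ge n\gamma_0+W(\alpha,d)+\pi\sum_iQ_0(\alpha_i)-O(\omega(r))$; letting $r\to0$ finishes this part.

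Finally, to bound $\e^{-2}\int_\Omega(1-|w_\e|^2)^2$ I would play the matched upper and lower bounds against each other. Jerrard's lower bound applied to the Dirichlet part alone (the same core/exterior localization, keeping only the kinetic term) gives $\int_\Omega\tfrac12\eta_\e^2|\nabla w_\e|^2\ge\pi n\logep+C_{\mathrm{kin}}+o(1)$. Subtracting this from the hypothesis $\Ee(w_\e)\le\pi n\logep+K_2$ leaves $\int_\Omega\tfrac{\eta_\e^4}{4\e^2}(1-|w_\e|^2)^2\le K_2-C_{\mathrm{kin}}+o(1)$; since $\eta_\e^2\ge\tfrac12$ forces $\eta_\e^4\ge\tfrac1{16}$, this yields $\e^{-2}\int_\Omega(1-|w_\e|^2)^2\le K_3$ for a suitable $K_3$, as claimed.
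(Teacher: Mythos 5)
Your proposal is correct and follows essentially the same route as the paper: degree quantization via the vortex-ball/clustering argument (the paper simply cites Colliander--Jerrard \cite{CJ} and Alicandro--Ponsiglione \cite{AP} for this step), the $W^{1,p}$ bound from Proposition~\ref{p:excessenergy}, the core/exterior splitting with Proposition~\ref{prop:locallb} plus lower semicontinuity of the Dirichlet integral against the optimal $S^1$-valued map for the liminf, and matched upper/lower bounds for the potential term (the paper cites \eqref{e:potbound}). The only caveat is that your final step should retain a fraction of the potential term in the lower bound rather than using ``the Dirichlet part alone,'' since controlling the modulus on circles in the ball construction requires the potential term; this is the standard form of that argument and does not affect the conclusion.
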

\begin{proof}
We clearly have $E_\e^1(w_\e)\le \|J_*\| \logep + K_4$ with $K_4$ depending on $K_2$ and $\|J_*\|$ as well as on $Q_0$. 
Hence we can use Ginzburg-Landau arguments of Colliander-Jerrard \cite{CJ} or Alicandro-Ponsiglione \cite{AP} to 
obtain the claimed structure of $J_*$.
The $L^p$ bound follows from Proposition~\ref{p:excessenergy}.
Let now $r>0$ be so small that $B_r(a_i)$ are disjoint and do not intersect $\partial\Omega$. Then we apply Proposition
\ref{prop:locallb} to obtain 
\[
\liminf_{r\to 0}\liminf_{\e\to 0}\Ee(w_\e;\bigcup B_r(\alpha_i) ) -\pi n \log \frac r \e -n\gamma_0  -\pi \sum Q_0(\alpha_i)\ge 0. 
\]
 In $\Omega_r(\alpha)$, we let $w_r$ be the optimal $S^1$ valued map as in the definition of the renormalized energy $W$, and 
let $\hat w_*$ be the subsequential weak limit of $w_\e$. Then $|\hat w_*|=1$ a.e. and hence by lower semicontinuity of the Dirichlet integral,
\[
\int_{\Omega_r(\alpha)} |\nabla w_r|^2 dx \le \int_{\Omega_r(\alpha)} |\nabla \hat w_*|^2 dx \le \liminf_{\e\to 0}
\int_{\Omega_r(\alpha)} |\nabla w_\e|^2  dx 
\]
By \eqref{eq:E1vsEeta}, we deduce that also
\[
\liminf_{\e\to 0}\Ee(w_\e;\Omega_r(\alpha)) \ge \frac12 \int_{\Omega_r(\alpha)} |\nabla w_r|^2 dx .
\]
Using that 
\[
W(\alpha,d)=\lim_{r\to 0} \left( \frac12 \int_{\Omega_r(\alpha)} |\nabla w_r|^2 dx -\pi n \log \frac1r \right),
\]
we obtain the claimed lower bound.

The upper bound for the penalty term has been shown in Proposition~\ref{p:excessenergy}.
\end{proof}

\subsection{The gradient flow}
Now we study the gradient flow analog of \eqref{eq:igl},
\begin{equation} \label{eq:gf}
\begin{cases}
\frac{1}{\logep} \p_t u = \Delta u + (p^2(x) - |u|^2) u &\quad\text{in $\Omega$} \\
\frac{\p u}{\p \nu} =0 & \quad\text{on $\p\Omega$}.
\end{cases}
\end{equation}
Instead of deriving an equation of motion by studying the limits of the energy density, we are going to use the method of 
$\Gamma$-convergence of gradient flows. This has the advantage that we will require slightly less regularity and 
convergence properties for the potential term $p(x)$ than in the Schr\"odinger part of this article.
We will  follow closely the approach of Sandier-Serfaty \cite{SSGamma} to obtain the convergence of the gradient flow of $\Ee$ to that of $F$.
In fact, the proof goes through with almost no essential changes, and we will therefore assume the reader is familiar with 
the argument and some of the notation of \cite{SSGamma} and mostly highlight the differences. 

Again, we use the approach of dividing by the profile $\eta_\e$, leading to an equation for $w_\e=\frac{u_\e}{\eta_\e}$. 
As we assume Neumann boundary conditions $\p_\nu \eta_\e=0$ for $\eta_\e$, then Neumann boundary conditions 
 $\p_\nu u=0$ imply Neumann boundary conditions $\p_\nu w_\e=0$. 
As before, we obtain 
\begin{equation}  \label{e:weightedGF}
\begin{cases}
\frac{1}{\logep} \eta^2_\e \p_t w_\e = \dv ( \eta^2_\e \nabla w_\e) + { \eta^4_\e \over \e^2} ( 1 - |w_\e|^2 ) w_\e. &\quad\text{in $\Omega$} \\
\frac{\p w_\e}{\p \nu} =0 & \quad\text{on $\p\Omega$}
\end{cases}
\end{equation}

This is the gradient flow of $\Ee$ with respect to the scalar product on $ \mathcal{X}_\e=L^2(\Omega;\C)$ given by the quadratic form
\[
\|v\|^2_{\mathcal X_\e} =\frac1\logep \int_\Omega \eta_\e^2 |v|^2 dx.
\]
In fact, we have for $\phi \in C^\infty(\Omega;\C)$ the G\^{a}teaux derivative 
\[
\mathrm{d} \Ee (w_\e;\phi) = \int_\Omega \eta_\e^2 \nabla w_\e\cdot \nabla \phi - \frac{\eta_\e^4}{\e^2} (1-|w_\e|^2) w_\e\cdot \phi  dx 
\]
and on the other hand,
\[
\left<\p_t w_\e, \phi \right>_{\mathcal X_\e} = \frac1\logep \int_\Omega \eta_\e^2 \p_t w_\e \phi dx 
\]
so integrating by parts we see that 
\eqref{e:weightedGF} is a gradient flow, more precisely  $\p_t w_\e = -\nabla_{\mathcal{X}_\e} \Ee(w)$. 

We will show that we can relate this gradient flow to that of $F(\alpha)$, where for $\alpha=(\alpha_1,\dots, \alpha_n)\in \Omega^n$, $\alpha_i\neq \alpha_j$ for 
$i\neq j$, we set
\[
F(\alpha)=W(\alpha,d) + \pi \sum_{j=1}^n Q_0(\alpha_j)
\]
and we will use the following metric on $ \mathcal{Y}= \R^{2n}$ (the tangential space to $\Omega^n$):
\[
\|b\|^2_{\mathcal{Y}}=\pi \sum_{j=1}^n |b_j|^2.
\]

\begin{thm}\label{t:GRADFL}
Assume that the profiles $\eta_\e$ solving \eqref{eq:TFeqn} satisfy 
 $Q_\e=\logep (\eta_\e^2-1)\to Q_0$ in $C^1(\overline{\Omega})$.

Let $(w_\e)$ be a family of solutions of \eqref{e:weightedGF} with 
initial data $w_\e^0$.
Assume that $w_\e^0$ satisfy $J(w_\e^0) \to \pi \sum d_j \delta_{ \alpha_j^0}$ in $\dot W^{-1,1}$ with $d_j \in \{\pm1 \}$.

Let $ \alpha (t):[0,T^m)\to \R^{2n}$ be a solution of the system of ODEs 
\begin{equation}\label{e:gfODE}
\pi \dot \alpha_j(t) = -\nabla_{\alpha_j} W(\alpha(t),d) - \pi \nabla Q_0(\alpha_j(t)) 
\end{equation}
where $T^m=T( \alpha^0)\in(0,\infty]$ is the maximal time of existence for \eqref{e:gfODE}, i.e. the smallest time $T$ such that as as $t\to T$, 
we have $r_{\alpha(t)}\to 0$.

If the initial data are very well-prepared, i.e.
$D_\e(0)=\Ee(w_\e^0) - H_\e(\alpha^0)\to 0$ as $\e\to 0$, then for $0\le t<T_m$, 
$J(w_\e(t))\to \pi \sum d_j \delta_{\alpha_j(t)}$ so we have convergence of the gradient flow of $\Ee$ to that of $F$. 
Furthermore, $D_\e(t)=\Ee(w_\e(t))-H_\e(\alpha(t))\to 0$ so the data continue to be well-prepared. 
Finally, we have for $T<T^m$ and any $B_i(t)$ that are disjoint open balls contained in $\Omega$ and centered at $\alpha_i(t)$ that
\begin{equation}
\frac1\logep \int_0^T \int_\Omega \eta_\e^2 \left| \p_t w_\e - \chi_{B_i(t)} \dot \alpha_i(t) \cdot \nabla w_\e \right|^2 dxdt \to 0.
\end{equation}
\end{thm}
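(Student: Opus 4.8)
The plan is to run the abstract $\Gamma$-convergence of gradient flows scheme of Sandier--Serfaty \cite{SSGamma} with the $\e$-families $\bigl(\mathcal{X}_\e,\Ee\bigr)$ and limit $\bigl(\mathcal{Y},F\bigr)$, so that essentially only the compactness and two lower bounds need to be re-verified in the presence of the weight $\eta_\e^2$ and the potential $Q_0$. The backbone is the energy-dissipation identity for \eqref{e:weightedGF}: since $\p_t w_\e=-\nabla_{\mathcal{X}_\e}\Ee(w_\e)$, for every $0\le T<T^m$,
\begin{equation*}
\Ee(w_\e(T)) + \frac12\int_0^T \|\p_t w_\e\|_{\mathcal{X}_\e}^2\,dt + \frac12\int_0^T \|\nabla_{\mathcal{X}_\e}\Ee(w_\e)\|_{\mathcal{X}_\e}^2\,dt = \Ee(w_\e^0).
\end{equation*}
The divergent part $\pi n\logep$ and the constant $n\gamma_0$ are common to both sides and cancel, so the identity is really one for $\Ee-\pi n\logep$, which is bounded by the well-preparedness of $w_\e^0$ and nonincreasing along the flow.

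First I would produce the limiting paths. The propagated bound $\Ee(w_\e(t))\le\pi n\logep+C$ lets Proposition~\ref{prop:Gammac1} apply at each time, giving $J(w_\e(t))\to\pi\sum_j d_j\delta_{\xi_j(t)}$ along a subsequence. The dissipation bound $\int_0^T\|\p_t w_\e\|_{\mathcal{X}_\e}^2\,dt\le C$ together with the Jacobian evolution \eqref{e:modJacev2} controls $\p_t J(w_\e)$ in a weak norm, yielding equicontinuity of $t\mapsto J(w_\e(t))$ in $\dot W^{-1,1}$; a diagonal argument exactly as in Proposition~\ref{p:lipschitzpaths} then gives $\xi_j\in \mathrm{Lip}([0,T];\Omega)$ with $\xi_j(0)=\alpha_j^0$ and $J(w_\e(t))\to\pi\sum_j d_j\delta_{\xi_j(t)}$ uniformly on $[0,T]$.

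The two lower bounds come next. For the velocity I would invoke the Sandier--Serfaty product estimate linking $\p_tJ(w_\e)$ to $\p_t w_\e$ and $\nabla w_\e$; because $\eta_\e^2=1+Q_\e/\logep\to1$ in $C^1$ --- the hypothesis, supplied for $k\ge3$ by Proposition~\ref{p:ellconv}, and the reason less regularity is needed here than in Theorem~\ref{t:vortexdynamics} --- the weight is a lower-order perturbation and one obtains
\begin{equation*}
\liminf_{\e\to0}\int_0^T\|\p_t w_\e\|_{\mathcal{X}_\e}^2\,dt\ \ge\ \int_0^T\|\dot\xi\|_{\mathcal{Y}}^2\,dt=\int_0^T\pi\sum_j|\dot\xi_j|^2\,dt.
\end{equation*}
For the slope I would show, for a.e.\ $t$,
\begin{equation*}
\liminf_{\e\to0}\|\nabla_{\mathcal{X}_\e}\Ee(w_\e(t))\|_{\mathcal{X}_\e}^2\ \ge\ \|\nabla_{\mathcal{Y}}F(\xi(t))\|_{\mathcal{Y}}^2,
\end{equation*}
where $\nabla_{\mathcal{Y}}F$ corresponds through the $\mathcal{Y}$-metric to $\nabla_{\xi_j}W(\xi,d)+\pi\nabla Q_0(\xi_j)$, the right-hand side of \eqref{e:gfODE}. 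This is the step carrying the genuinely new content, since here the inhomogeneity must be shown to contribute exactly the force $\pi\nabla Q_0$. Writing $\nabla_{\mathcal{X}_\e}\Ee=-\frac{\logep}{\eta_\e^2}\left[\dv(\eta_\e^2\nabla w_\e)+\frac{\eta_\e^4}{\e^2}(1-|w_\e|^2)w_\e\right]$, I would analyze this residual near each vortex exactly as in the $\mathcal{D}$-term of the proof of Theorem~\ref{t:vortexdynamics}: the stress-energy localization \eqref{e:equipart} and the excess-energy control of Proposition~\ref{p:excessenergy} show that $\logep\,\nabla\eta_\e^2/\eta_\e^2\to\nabla Q_0(\xi_j)$ generates the vortex-potential force, while the $\dv(\eta_\e^2\nabla w_\e)$ part reproduces $\nabla_{\xi_j}W$ by the classical Ginzburg--Landau computation of \cite{CJ,SSGamma}.

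Finally I would take $\liminf$ in the dissipation identity: Proposition~\ref{prop:Gammac2} bounds the energy term below by $F(\xi(T))$, the two displays above bound the velocity and slope terms, and very-well-preparedness sends the right side to $F(\alpha^0)$, giving
\begin{equation*}
F(\xi(T))+\frac12\int_0^T\|\dot\xi\|_{\mathcal{Y}}^2\,dt+\frac12\int_0^T\|\nabla_{\mathcal{Y}}F(\xi)\|_{\mathcal{Y}}^2\,dt\ \le\ F(\alpha^0).
\end{equation*}
On the other hand, the chain rule and Young's inequality give for any absolutely continuous path the reverse bound $F(\alpha^0)-F(\xi(T))\le\frac12\int_0^T\|\dot\xi\|_{\mathcal{Y}}^2\,dt+\frac12\int_0^T\|\nabla_{\mathcal{Y}}F(\xi)\|_{\mathcal{Y}}^2\,dt$, with equality iff $\dot\xi=-\nabla_{\mathcal{Y}}F(\xi)$. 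Hence every inequality is an equality: $\xi$ solves \eqref{e:gfODE} and so $\xi\equiv\alpha$ by uniqueness, the $\Gamma$-liminf is attained so that $D_\e(t)\to0$ (well-preparedness propagates), and saturation of the velocity lower bound upgrades to the strong convergence
\begin{equation*}
\frac1\logep\int_0^T\int_\Omega\eta_\e^2\left|\p_t w_\e-\chi_{B_i(t)}\dot\alpha_i(t)\cdot\nabla w_\e\right|^2\,dx\,dt\to0.
\end{equation*}
I expect the slope lower bound to be the main obstacle, as it is the only place where the critically-scaled background must be matched precisely against the limiting force $\pi\nabla Q_0$; the product estimate and the dissipation bookkeeping are as in \cite{SSGamma}, with $\eta_\e^2\to1$ in $C^1$ keeping all weight corrections lower order.
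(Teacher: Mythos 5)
Your overall architecture (energy--dissipation identity, compactness of the Jacobians, velocity lower bound via the product estimate, then closing with the De Giorgi chain of inequalities) matches the Sandier--Serfaty scheme that the paper also follows, and your treatment of the compactness and of the velocity lower bound agrees with the paper's: the paper propagates $\Ee(w_\e(t))\le \pi n\logep+C$ and $\int_0^{T_0}\int\eta_\e^2|\p_t w_\e|^2\le C\logep$ as in \cite[Lemma 3.4]{SSGamma}, extracts $H^1$ paths $b_j(t)$, and applies Corollary 7 of \cite{SSprod} together with $|\eta_\e^2-1|\le C/\logep$ to get \eqref{e:GCGFpart1}. The gap is in the step you yourself flag as the main obstacle, the slope lower bound, and the method you propose for it does not work as described. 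The $\mathcal{D}$-term analysis in the proof of Theorem~\ref{t:vortexdynamics} and the equipartition estimate \eqref{e:equipart} give weak-$*$ control of the stress--energy tensor, i.e.\ they identify the \emph{force} $\pi\nabla Q_0(\xi_j)$ as the limit of a pairing of the PDE residual against one specific test function ($x\,\chi$, via the Jacobian evolution \eqref{e:modJacev2}). The slope, by contrast, is the $L^2(\eta_\e^2\,dx)$ norm of the full residual $f_\e=\dv(\eta_\e^2\nabla w_\e)+\frac{\eta_\e^4}{\e^2}(1-|w_\e|^2)w_\e$ divided by $\logep$, and a lower bound on it requires, by duality, exhibiting a near-optimal test function $\phi$ for which both $\langle f_\e,\phi\rangle$ and $\|\phi\|_{\mathcal{X}_\e}$ can be computed and matched against $dF(\xi)\cdot V$ and $\|V\|_{\mathcal{Y}}$. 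Weak-$*$ information about linear functionals of $f_\e$ cannot be converted into such a quadratic lower bound.

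The paper fills exactly this hole by verifying the \emph{construction} hypothesis of \cite[Theorem 1.4]{SSGamma} instead of proving the slope bound directly: given any $V\in(\R^2)^n$ it builds the pushed deformation $v_\e(\chi_t(x),t)=w_\e(x)e^{i\psi_t(x)}$, checks the kinetic matching \eqref{eq:kin} using \eqref{e:obs}, and differentiates the energy along this path. The genuinely new content is the pair of terms where the derivative hits $\eta_\e\circ\chi_t$, namely $\mathcal{A}_1=\int\frac12\frac{d}{dt}\big|_{t=0}(\eta_\e\circ\chi_t)^2|\nabla w_\e|^2$ and $\mathcal{A}_2$; using $\frac{d}{dt}\big|_{t=0}(\eta_\e\circ\chi_t)^2=V_i\cdot\nabla(\eta_\e^2)$ in $B_i$, the uniform convergence $\frac1\logep\nabla(\eta_\e^2)\to\nabla Q_0$, and the concentration of the logarithmic energy in the $B_i$, one gets $\mathcal{A}_1\to\pi\sum V_i\cdot\nabla Q_0(\alpha_i)$ and $\mathcal{A}_2\to 0$ by \eqref{e:potbound}. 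This is where the potential force $\pi\nabla Q_0$ actually enters, and it is the infinitesimal version of this deformation that serves as the optimal test function your duality argument would need. (There is also a preparatory step you omit: showing $\dv j(w_\e)\to 0$ in $L^2$ from the bound on $\|\nabla_{\mathcal{X}_\e}\Ee\|_{\mathcal{X}_\e}$, needed before the SSGamma construction applies.) To repair your proof, replace the direct slope argument with this construction; the rest of your outline then goes through.
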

\begin{rem}
See \propref{p:ellconv} for conditions on the potential $p_\e$ that imply the convergence of $Q_\e$ assumed in the theorem. Note that
we require less regularity and convergence than for the Schr\"odinger case.
\end{rem}
\begin{proof}[Proof of \thmref{t:GRADFL}]
The proof is mostly an exercise in changing $E^1_\e$ to $\Ee$ in the corresponding results in \cite{SSGamma}. The 
main observation, which we will use repeatedly below,
 is that for any sequence of $f_\e\in L^2(\Omega)$ we have 
\begin{equation}\label{e:obs}
 (1+\frac C \logep)^{-1} \int_\Omega |f_\e|^2 dx\le 
\int_\Omega \eta_\e^2 |f_\e|^2 dx \le (1+\frac C \logep) \int_\Omega |f_\e|^2 dx,
\end{equation}
so in particular if $\int_\Omega \eta_\e^2 |f_\e|^2 dx \le K \logep$, then $\int_\Omega |f_\e|^2 dx \le K\logep + KC$, 
and a similar reverse inequality. 

We note that any solution of \eqref{e:weightedGF} satisfies for $t_1<t_2$
\[
\Ee(w_\e(t_2)) = \Ee(w_\e(t_1)) + \frac{1}{\logep}\int_{t_1}^{t_2} \int_\Omega \eta_\e^2 |\p_t w_\e|^2dxdt
\]
as follows from multiplying \eqref{e:weightedGF} with $\p_t w_\e$ and integrating by parts. In particular, $s\mapsto \Ee(w_\e(s))$ is 
weakly decreasing and 
\[
\int_{t_1}^{t_2}\int_\Omega \eta_\e^2 |\p_t w_\e|^2dxdt\le C \logep^2.
\]

Essentially verbatim as in \cite[Lemma 3.4]{SSGamma}, we can show the existence of $T_0>0$ such that actually 
\[
\int_0^{T_0} \int_\Omega \eta_\e^2 |\p_t w_\e|^2dxdt\le C \logep,
\]
which implies 
\[
\int_0^{T_0} \int_\Omega |\p_t w_\e|^2dxdt\le C \logep+o(\logep).
\]
In addition, we also have for all $t\in[0,T_0)$ with $n=\sum |d_j^0|$ the estimate
$\Ee(w_\e(t))\le \pi n\logep+C $ and so $E_\e^1(w_\e(t))\le \pi n\logep+o(\logep)$. 

We can now apply Proposition 3.2 and Proposition 3.3 of \cite{SSGamma} to $w_\e$ and obtain that 
(for a subsequence) $J(w_\e(t))\to \pi \sum d_j \delta_{b_j(t))}$, where $b_j\in H^1(0,T_0;\R^2)$ with $b_j(0)=\alpha_j^0$, 
i.e. the $d_j$ are constant.
Corollary 7 of \cite{SSprod} now applies to show
\[
\liminf_{\e\to 0} \frac1\logep \int_{t_1}^{t_2}\int_\Omega |\p_t w_\e|^2dxdt \ge \pi \sum \int_{t_1}^{t_2} |\dot b_i|^2 dt .
\]
As $|\eta_\e^2-1|\le \frac C \logep$, this also implies
\begin{equation}\label{e:GCGFpart1}
\liminf_{\e\to 0} \frac1\logep \int_{t_1}^{t_2}\int_\Omega \eta_\e^2 |\p_t w_\e|^2dxdt \ge \pi \sum \int_{t_1}^{t_2} |\dot b_i|^2 dt.
\end{equation}

The last equation is the lower bound part needed for the $\Gamma$-convergence of gradient flows argument. 

The construction also proceeds almost verbatim as in Proposition 3.5 of \cite{SSGamma}. 
Let $w_\e$ satisfy $J(w_\e)\to \pi \sum d_j \delta_{\alpha_j}$ and $D_\e(w_\e,\alpha)\le C$ and assume
that $\|\nabla_{\mathcal{X}_\e} \Ee(w_\e)\|_{\mathcal{X}_\e}\le C$. 
This implies that 
\[
\int_\Omega \frac{1}{\eta_\e^2} \left| \dv (\eta_\e^2 \nabla w_\e)+ \frac{\eta_\e^4}{\e^2}(1-|w_\e|^2) w_\e\right|^2dx \le \frac{C}{\logep}.
\]
We set 
\[f_\e=\dv(\eta_\e^2\nabla w_\e) + \frac{\eta_\e^4}{\e^2}(1-|w_\e|^2)w_\e\]  
and note that $f_\e\to 0$ in $L^2$ and 
 $(iw_\e,f_\e)=(iw_\e,\dv(\eta_\e^2 \nabla w_\e))$. Furthermore
\[
\dv j(w_\e) = (iw_\e,\dv \nabla w_\e ) = (iw_\e,\dv(\eta_\e^2\nabla w_\e)) -\nabla(\eta_\e^2)\cdot j(w_\e) .
\]
By \eqref{e:potbound} we see $|w_\e|\to 1$. From \eqref{e:Lpbound} together with $|\nabla(\eta_\e^2)|\to 0$
we deduce $\dv j(w_\e) \to 0$ in $L^2(\Omega)$. From this point we can continue as in \cite{SSGamma}, as now both
$\curl j(w_\e)$ and $\dv j(w_\e)$ have the same properties as the analogous quantities in \cite{SSGamma}.

Now let $V\in (\R^2)^n$ and let $b(t)$ a curve satisfying $b(0)= \alpha$ and $\p_t b(0)=V$. We claim that there exists a path
$v_\e(t)$ with $v_\e(0)=w_\e$ and the following properties:
\begin{gather}
\|\p_t v_\e(0)\|^2_{\mathcal{X}_\e}  = \|\p_t b(0)\|^2_{\mathcal{Y}}+o(1) \label{eq:kin} , \\
\lim_{\e\to 0} \frac{d}{dt}\big\vert_{t=0} \Ee(v_\e(t)) = \frac{d}{dt}\big\vert_{t=0} F(b(t)) + g( \alpha) D_\e \label{eq:enloss}
\end{gather}
for some function $g$ that is locally bounded in $\{a\in \Omega^n : r_a>0\}$. 

To construct this path in function space, we use the same pushing as in \cite{SSGamma}. Let $B_i=B_\rho( \alpha_i)$ 
with $\rho<r_\alpha$ be pairwise disjoint balls and define $\chi_t:\Omega\to \Omega$ to be 
a one-parameter family of diffeomorphisms that satisfies
\[
\chi_t(x)=x+tV_i \quad\text{in $B_i$.}
\]
With the phase corrector function $\psi_t$ defined as in (3.24) of \cite{SSGamma}, set 
\[
v_\e(\chi_t(x),t) = w_\e(x) e^{i\psi_t(x)}.
\]
The claim \eqref{eq:kin} follows as in \cite{SSGamma}, keeping in mind our observation \eqref{e:obs}.

To calculate the energy change along this path, we change variables $y=\chi_t(x)$ and obtain with $\mathrm{Jac}(\chi_t)$ denoting
the Jacobian determinant,
\begin{align*}
\Ee(v_\e) &= \int_{\Omega} \frac12 \eta_\e^2(y) |\nabla v_\e(y)|^2 + \frac{\eta_\e^4(y)}{4\e^2} (1-|v_\e(y)|^2)^2 dy \\
&= \int_\Omega \left(\frac12 \left( \eta_\e\circ \chi_t(x)\right)^2 \left| D\chi_t^{-1} \nabla (w_\e e^{i\psi_t})\right|^2 
+ \frac{(\eta_\e\circ \chi_t)^4}{4\e^2} (1-|w_\e(x)|^2)^2 |\mathrm{Jac}(\chi_t)|\right) dx.
\end{align*}
We now differentiate in time and set $t=0$. When the derivative does not hit $\eta_\e\circ \chi_t$, the resulting terms 
can be dealt with exactly as in \cite{SSGamma} by our observation \eqref{e:obs}. The remaining terms 
are 
\[
\mathcal{A}_1 = \int_\Omega \frac12 \frac{d}{dt}\Big\vert_{t=0} (\eta_\e\circ \chi_t)^2 |\nabla w_\e|^2 dx
\]
and 
\[
\mathcal{A}_2 = \int_\Omega \frac{1}{\e^2} \frac{d}{dt}\Big\vert_{t=0} (\eta_\e\circ \chi_t)^4 (1-|w_\e|^2)^2 dx.
\]
We note that in $B_i$, $\frac{d}{dt}\big\vert_{t=0} (\eta_\e\circ \chi_t)^2=V_i \cdot \nabla (\eta_\e^2)$. 
Using the uniform convergence 
$\frac1\logep\nabla(\eta_\e^2)\to \nabla Q_0$ and the fact that the logarithmic part of the energy 
is concentrated in the $B_i$ (recall \propref{p:excessenergy} or
(3.17) of \cite{SSGamma}), we deduce that $\mathcal{A}_1\to \pi \sum V_i \cdot \nabla Q_0(\alpha_i)$ as $\e\to 0$.
To show that $\mathcal{A}_2\to 0$, we recall \eqref{e:potbound} and note that $|\nabla (\eta_\e^4)|\to 0$ uniformly in $\Omega$.
Together with the results of \cite{SSGamma} for the terms not involving derivatives of $\eta_\e$,
 we deduce \eqref{eq:enloss}.

On the interval $[0,T_0)$, we can now apply Theorem 1.4 of \cite{SSGamma} and obtain the claim of \thmref{t:GRADFL} up
to the time $T_0$. The global statement up to $T^m$ follows as in Section 3.3 of \cite{SSGamma}.
\end{proof}

\section{Numerical Simulations of the Vortex Dynamics}
\label{s:numerics}

Armed with the dynamical equation \eqref{e:ODE}, we can simulate the flow of vortices and observe the impact of the background potential at the scales we have studied in Theorem \ref{t:vortexdynamics}.  For our simulations, we have used the renormalized energy
\[
W(\alpha,d)=-\pi \sum_{j\neq k} d_j d_k \log|\alpha_j-\alpha_k|,
\]
which is the correct expression for $\Omega=\R^2$, see Remark~\ref{r:extensions}.  This is an approximation for the renormalized energy for $\Omega = B_R (0)$ for $R \gg 1$ sufficiently large compared to $|\alpha|$.  
We will focus here on the case of the dipole (a pair of vortices of opposite charge) interacting with potentials $Q_0=V_i$ of the form
\begin{align}
\label{numpots1}
& V_1 = e^{-|\vec x|^2} \  \ \text{Single Gaussian} , \\
\label{numpots2}
&  V_2 = .225 \tanh (x) \ \ \text{Step Function to different material background} , \\
\label{numpots3}
& V_3 = e^{ - | \vec x - (1,0)|^2} + e^{ - | \vec x + (1,0)|^2}  \ \ \text{Double Gaussian}, \\
\label{numpots4}
& V_4 = \sum_{j,k = -15}^{15}    e^{-|\vec x - (j,k)|^2}  \ \ \text{Lattice of Gaussians}.
\end{align}
While these potentials are not compactly supported, we can apply Theorem~\ref{t:vortexdynamics} by truncating the potentials outside
a suitably large domain without affecting the dynamics we are plotting.

Recall that in the absence of the background potential, dipole dynamics simply move in in a straight line perpendicular to that connecting the vortex centers at a speed correlated to the vortex spacing.  

The equations \eqref{e:ODE} for each choice of background are then plugged into the {\it ode15s} ODE solver in {\it Matlab}  and integrated over time scales long enough to observe the impact of the background potential on the dipole dynamics.    The results are recorded graphically in Figure \ref{dynfig1}.

\begin{figure}[htp]
\begin{tabular}{cc}
\includegraphics[width=4.5cm]{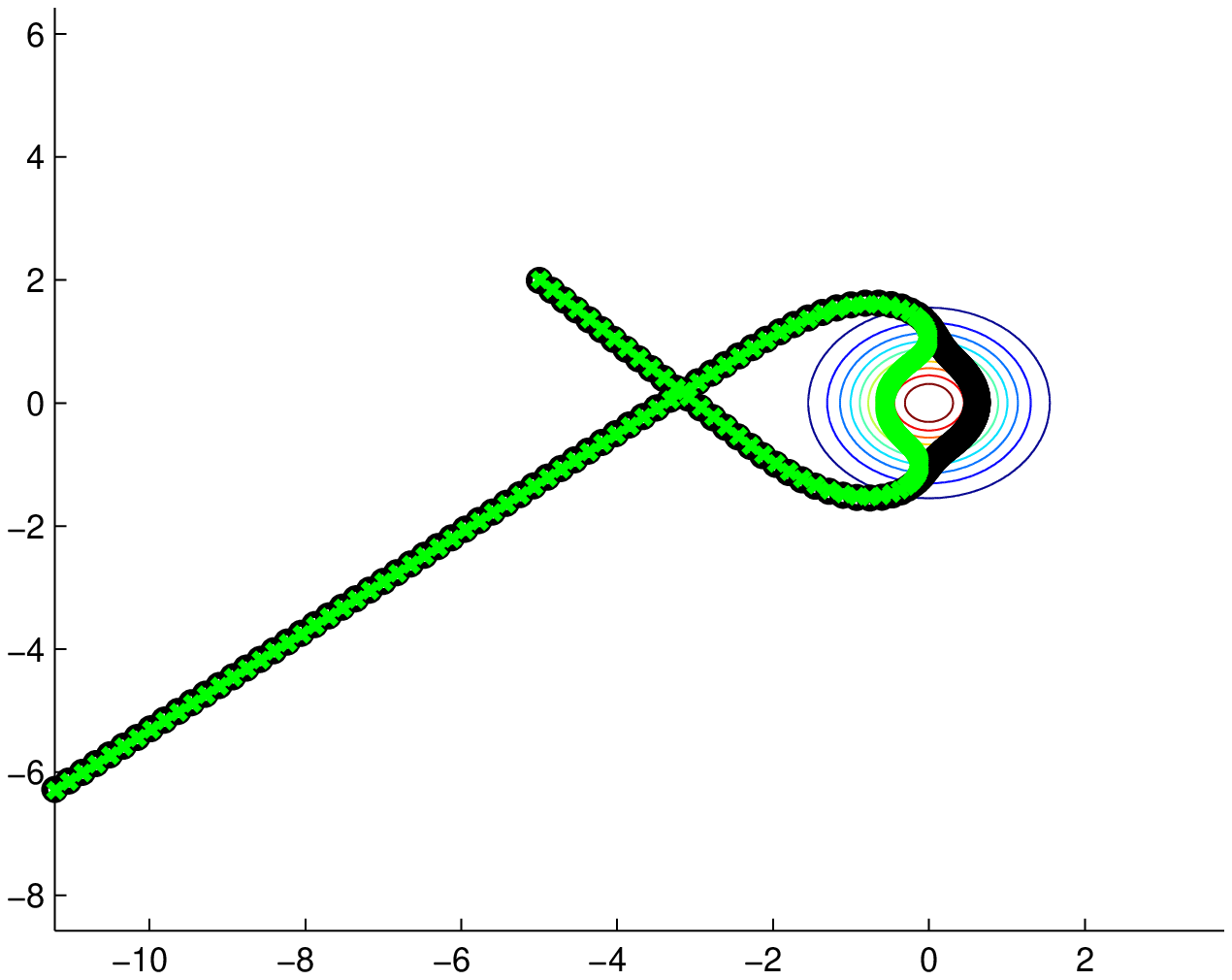} &
\includegraphics[width=4.5cm]{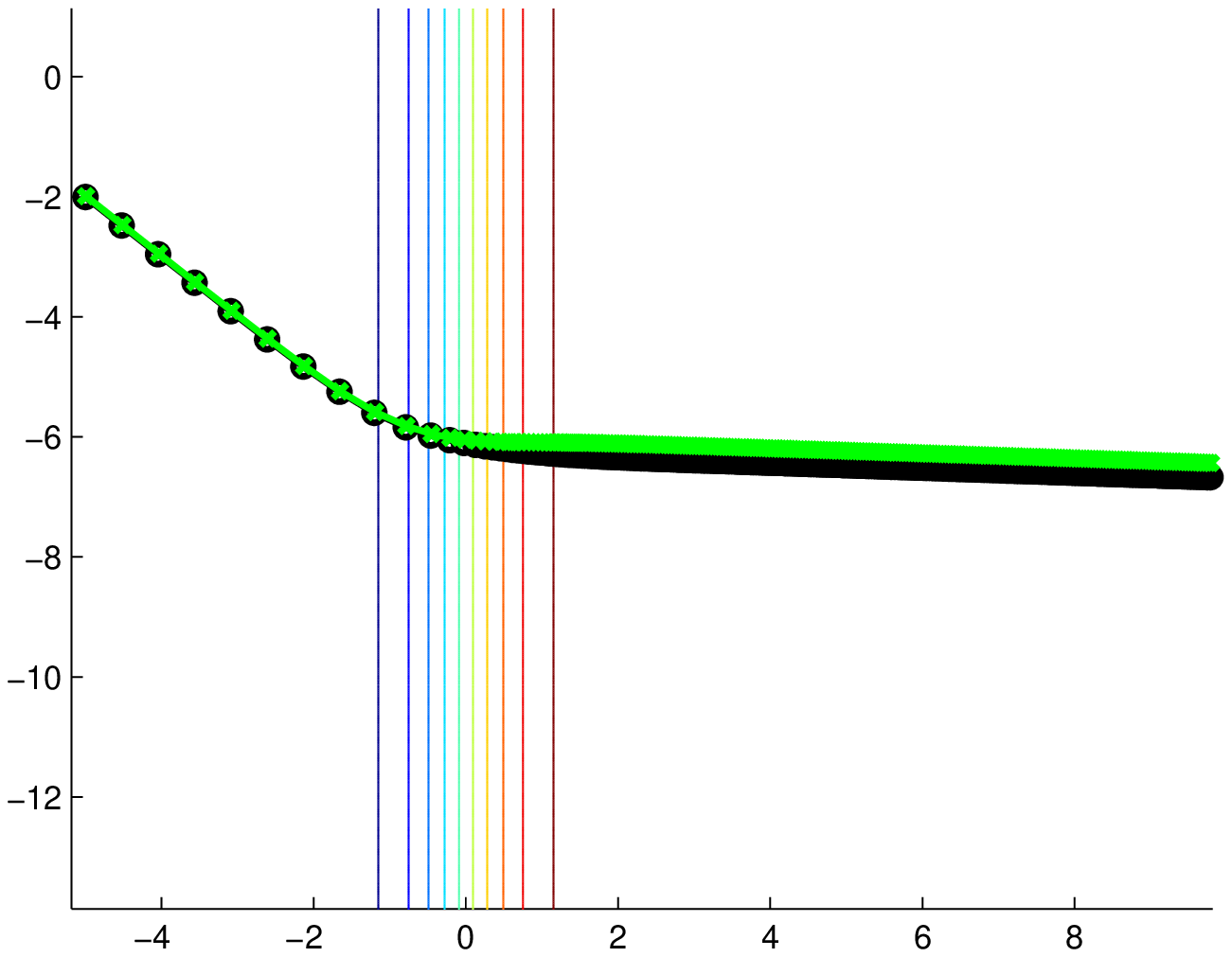} \\
\includegraphics[width=4.5cm]{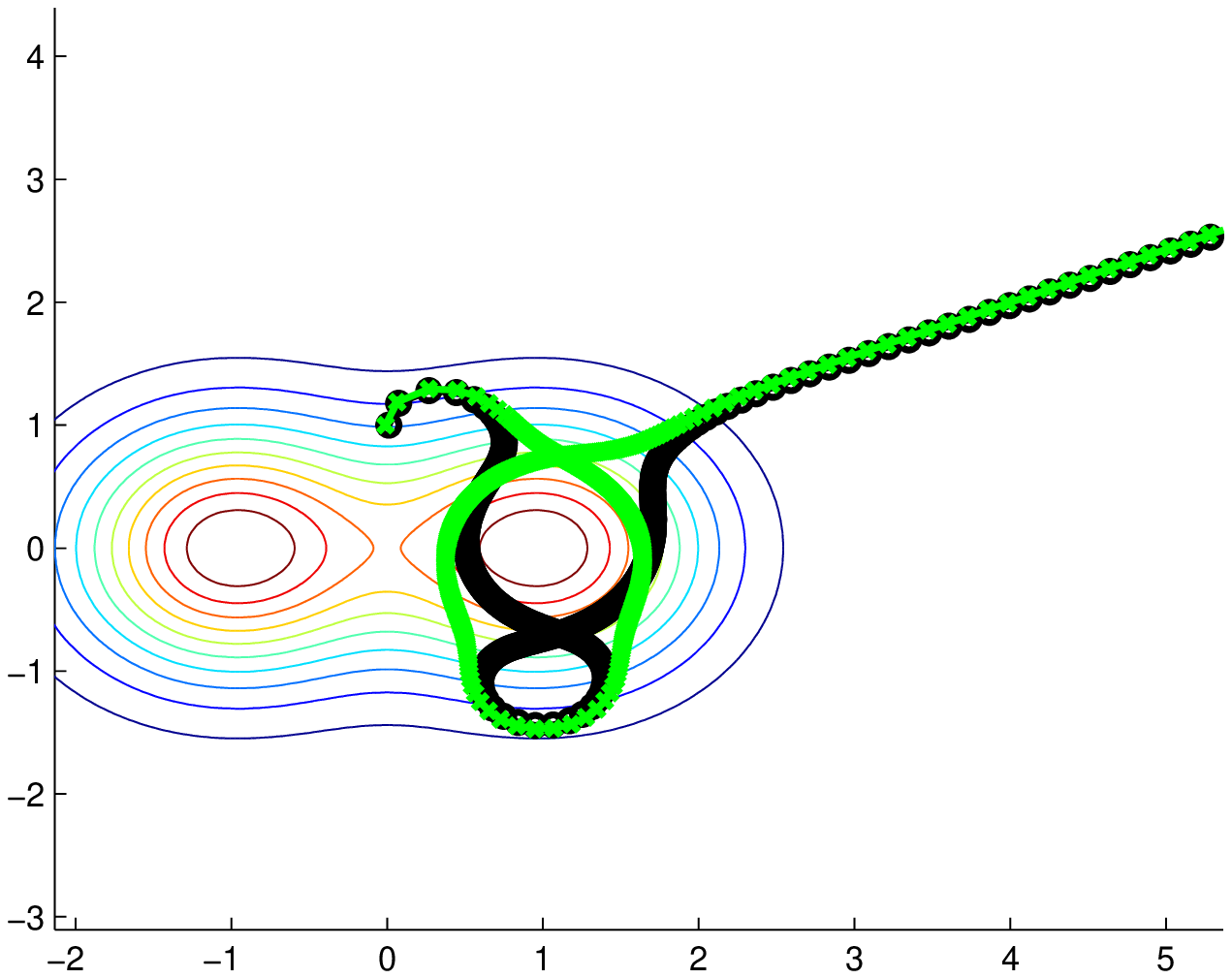} &
\includegraphics[width=4.5cm]{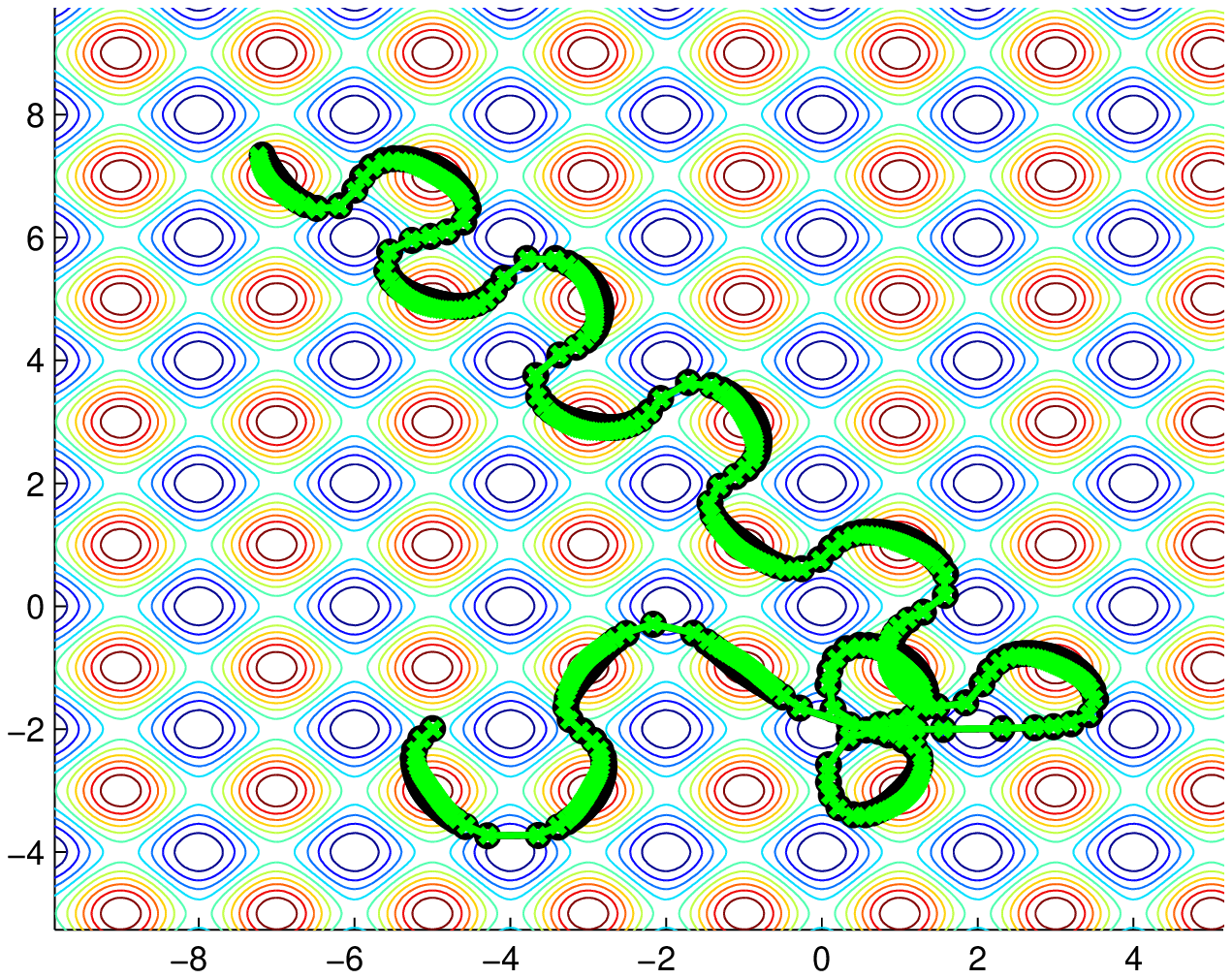} \\
\end{tabular}
\caption{Dipole dynamics for $d_1 = 1$, $d_2 = -1$ plotted over contours of $Q_0$ for the cases of 1. The single Gaussian \eqref{numpots1}
 with $\vec{\alpha}_1 (0) = (-5.0,2.0)$, $\vec{\alpha}_2 (0) = (-4.99,1.99)$ (Top Left), 2. A smooth transition function  \eqref{numpots2} with $\vec{\alpha}_1 (0) = (-5.0,-2.0)$, $\vec{a}_2 (0) = (-4.99,-1.99)$ (Top Right), 3. A symmetric double-well Gaussian potential  \eqref{numpots3} with $\vec{\alpha}_1 (0) = (-0.01,1.0)$, $\vec{\alpha}_2 (0) = (0.01,1.0)$ (Bottom Left), 4. A square lattice array of Gaussians \eqref{numpots4} with $\vec{\alpha}_1 (0) = (-5.0,-1.99)$, $\vec{\alpha}_2 (0) = (-4.99,-2.0)$.}
\label{dynfig1}
\end{figure}

\appendix


\section{Estimates on $\etae$}
\label{s:etacontrol}

We provide here the details required to prove Proposition \ref{p:ellconv}.  We seek to understand bounds on the function $\eta_\e$ defined by
\begin{equation}
\label{eqn:etae}
\Delta \eta_\e = - \frac{1}{\e^2} ( p_\e^2 - \eta_\e^2) \eta_\e,
\end{equation}
with Neumann boundary conditions, where $p_\e$ represents the IGL background on a bounded domain $\Omega \subset \R^2$.

\begin{proof}[Proof of Proposition~\ref{p:ellconv}]

1.  We first claim there exists an $H^{k+2}$ solution for $k \in \mathbb{N}$.   \\

We use a slightly different ansatz for $p_\e$ and $\eta_\e$: 
$$p_\e = 1+\frac{\rt_\e}{|\log\e|}, \ \ \eta_\e=1+\frac{\tQ_\e}{|\log\e|}.$$ 
Recall that we will assume that $p_\e \in H^k$ for $k \geq 2$ with $\e^2  \rt_\e \to 0$ in $H^{k+1}$ as $\e \to 0$ and $\nabla p_0$ is compactly supported strictly on the interior of $\Omega$.  The last assumption in particular will be used for simplicity to allow convergence up the boundary, though this likely can be relaxed.
We first claim that $\eta_\e$ is the unique positive solution of the following minimization problem,
\begin{equation} \label{e:etaenergy}
\eta_\e = \arg \min_{\eta \in H^1(\Omega; \R^1)} \int_\Omega {1\over 2} \LV \nabla \eta \RV^2 + {1\over 4\e^2} \LC p_\e^2 - \eta^2 \RC^2 dx .
\end{equation}
 Since we are working on a bounded set $\Omega$, such a nontrivial $\eta_\e$ exists in $H^1$  for $\e$ sufficiently small and $p_\e$ sufficiently bounded in $H^1$
 using a direct method and Rellich-Kondrachov.
 We can improve the regularity away from the boundary.  In particular if $\rt_\e \in H^k$ then $\eta_\e \in H^{k+2}$, even though the $H^{k+2}$ norm blows up as $\e \to 0$.  This follows from standard elliptic theory results involving nonlinear Sobolev embeddings to bootstrap regularity, see for instance \cite{CMMT}, Section $2.4$. 
  The existence of Euler-Lagrange equations, strong solutions, and classical solutions follow.  

As noted in \eqref{Qe-def} this is a slight shift of notation, as the $Q_0$ defined in the main text is not quite the limit of this family of functions, rather it takes the form
\[ Q_0 =  \lim_{\e \to 0} \left[ 2 \tQ_\e + \frac{\tQ_\e^2}{\logep} \right].\]
 The Euler-Lagrange perturbation equation is written as 
 \begin{align}
 \label{tQeq}
 0 = \e^2 \Delta \tQ_\e + \logep ( 1 + { \tQ_\e \over \logep}) ( { 2 \rt_\e \over\logep} + {\rt_\e^2 \over \logep^2} - {2 \tQ_\e \over \logep} - {\tQ_\e^2 \over \logep^2} ),
 \end{align}
 and so 
\begin{align*}
- {\e^2 \over 2} \Delta \tQ_\e = (\rt_\e - \tQ_\e) \LB \LC 1 + {\tQ_\e \over \logep}  \RC \LC  1 + { \rt_\e \over 2 \logep} + {\tQ_\e \over 2 \logep}      \RC  \RB  .
\end{align*}
 Then we can write this as 
 \begin{align}
 \label{Qeq}
 \LC 1 - \frac{1}{\logep}  \right(  \rt_\e + \frac{\rt_\e^2}{2 \logep} \left)  - {\e^2 \over 2} \Delta \RC  \tQ_\e  = \rt_\e + \frac{\rt_\e^2}{2 \logep} - \frac{\tQ_\e^2}{2 \logep} + \frac{ \tQ_\e^3}{\logep^2}.
 \end{align}

2.  We next establish $\e$-dependent  estimates on a model problem. \\
 
The required estimates follow by, for instance, carefully modifying \cite{Folland}[Theorem $7.32$], which draws upon ideas originally put forth by Nirenberg \cite{Nir1}.  The proof relies upon an analysis of regularity up to the boundary by flattening locally to the half-plane and using the Green's function there, application of difference operators to gain regularity and induction on regularity estimates for elliptic problems with Neumann boundary problems.  Essentially, the argument boils down to the fact that the operator is coercive for each $\e$ however.  The equation also has a unique set of solutions by the same property.\footnote{Perhaps the closest argument of this type for Neumann boundary conditions  can be found in  \cite{Taylor}, Chapter $5.7$, Propositions $7.4$ and $7.5$ where it states that 
for a smooth enough domain $\Omega \subset \RR^2$, there exists a unique solution $v$ to the elliptic equation
\[  (-\Delta + 1) v = f \ \text{in} \ \Omega, \ \ \frac{\partial v}{\partial \nu} = 0 \ \text{on} \ \partial \Omega ,\]
and for all $k = 0,1,2,\dots$, given $f \in H^k$, we have
\[  \| v \|_{H^{k+2} (\Omega)}^2 \leq C \| \Delta v \|_{H^k (\Omega)}^2 + C \| v \|_{H^{k+1} (\Omega)}^2. \]
Since we need precise control  on the constants of our estimate with respect to $\e$ for a perturbation of this equation, which are not available by rescaling, we have included a proof for completeness. }  
 
To establish the bounds we need more precisely, we follow \cite{Folland}[Theorems $7.32$ and $7.29$], where the author studies Sobolev estimates on coercive elliptic equations.  In our setting, these equations take the form
 \[ 
L_\e w := \left(1 + \frac{1}{\logep} A - \frac{\e^2}{2} \Delta \right) w = f, \ \ \partial_\nu w = 0
 \]
 on $\Omega$ smooth enough, for $A$ a bounded function of the same regularity as $p$.

 We will first consider this equation
 on 
 half-balls of radius $s$ with flat boundary, say $N(s)$ where $y$ parametrizes the boundary and $x$ parametrizes the interior.  We claim we have an estimate of the form
  \begin{equation}
 \label{claim}
 \e^2 \| w \|_{H^{k+2}} + \| w \|_{H^k} \leq C \| f \|_{H^k}.
  \end{equation}
 To see this, we first claim that
  \begin{equation}
 \label{claim1}
 \e^2 \| \partial_x^\gamma w \|_{H^1 (N( \tilde r ))} \leq \| f \|_{H^k (N(r))}\
\end{equation}
 with $\tilde r < r$, with $\gamma \leq k + 1$.    For $\gamma = 0$, this is exactly the coercivity estimate.  Otherwise, we use an inductive argument constructed via difference operators inside the Dirichlet form $D(\Delta_h^\gamma \zeta w, \Delta_h^\gamma \zeta w)$ for $\zeta$ a cut-off to $N$ that vanishes on the curved part of the boundary of $N$.  Commuting with 
the cut-off function and integrating by parts when necessary, we observe
\begin{align*}
 & \e^2 \| \Delta_h^\gamma  \zeta w \|_{H^1 (N(\tilde r))} + \left( 1 - \frac{\max |A|}{\logep} \right) \| \Delta_h^\gamma \zeta w \|_{L^2 (N(\tilde r))}  \leq D(\Delta_h^\gamma \zeta w, \Delta_h^\gamma \zeta w)  \\
&  \hspace{2cm} \leq C \e^2 \| \Delta_h^\gamma  \zeta w \|_{H^1 (N(\tilde r))} \left(  \| f \|_{H^k (N(r))} + \| \partial^{\gamma-1} w \|_{L^2} \right),
\end{align*}
where then by induction we have
\begin{align*}
& \e^2 \| \Delta_h^\gamma  \zeta w \|_{H^1 (N(\tilde r))} +  \left( 1 - \frac{\max |A|}{\logep} \right) \| \Delta_h^\gamma \zeta w \|_{L^2 (N(\tilde r))}  \leq D(\Delta_h^\gamma \zeta w, \Delta_h^\gamma \zeta w)  \\
& \hspace{2cm} \leq C  \| f \|_{H^k (N(r))} .
\end{align*}
Similarly, we claim that 
   \begin{equation}
 \label{claim2}
 \| \partial_x^\gamma w \|_{L^2 (N( \tilde r ))} \leq \| f \|_{H^k (N(r))}
 \end{equation}
 for $0 \leq \gamma < k$. This follows by instead putting all the derivatives onto $f$ and using $L^2$ instead of $H^1$ norms for the $f$ term on the right hand side.  
 
 At the boundary, we can then establish \eqref{claim},
 again proven using induction.  To see this, we recognize that if $\gamma$ is a multi-index and $\gamma_2 = 0$ or $1$, then the estimate follows by \eqref{claim1} and \eqref{claim2}.  For
  use the fact that 
 \[ 
 \partial_y^2 w = - \frac{2}{\e^2} \left( f +  \frac{\e^2}{2} \partial_x^2 w - w \right).
 \]
 If the number of derivatives on $y$ is $0$ or $1$, we use the above estimate.  For more than that, we pull off the first two derivatives in $y$, make the substitution, then use the inductive hypothesis.
 
Then, once such estimates are established on half-balls, we can create a sequence of cut-off functions $U_0 = B(0,R), \dots, U_k = V_0$, $\overline{U}_{j+1} \subset U_j$, where $V_0 \cup V_1 \cup \dots \cup V_M = B(0,R)$ and $V_j$ can be mapped to a half-ball for $j > 0$ and $W_0$ is an interior region.  
 
Letting $\zeta_j$ be a cut-off to $U_j$, on the interior we have
\begin{align*}
& \e^2 \| w \|_{H^{j+2}( U_{j+1})} + \left( 1 - \frac{\max |A|}{\logep} \right)  \| w \|_{ H^j (U_{j+1})}  \leq \e^2 \| \zeta_j w \|_{H^{j+2}( U_{j+1})} + \| \zeta_j w \|_{ H^j (U_{j+1})} \\
& \hspace{1cm}  \leq  C \| L \zeta_j w \|_{H^j }  \\
&  \hspace{1.2cm} \leq C \| \zeta_j f \|_{H^j} + \e^2 \| w \|_{H^{j+1} (U_j)} ,
\end{align*}
from which the result follows via induction.  On regions identified with a half-ball, we apply \eqref{claim}.  

3.  We can use the linear estimates \eqref{claim} to generate \emph{a posteriori} estimates on the sequence.  
From the energetic formulation, we have using that $\eta = p_\e$ provides a natural set of bounds the observation
\[
\| p_\e^2 -  \eta_\e^2 \|_{L^2}^2 \leq 2 \e^2 \| \nabla p_\e \|_{L^2}^2.
\]
This gives
\[
\| \rt_\e - \tQ_\e \|_{L^2} \leq \sqrt{2} \e  \| \nabla \rt_\e \|_{L^2}
\]
or
\[
\| \tQ_\e \|_{L^2} \leq C \| \rt_\e \|_{H^1}.
\]
By a similar line of reasoning, we know that 
\[ 
\| \nabla \tQ_\e \|_{L^2} \leq  \| \rt_\e \|_{H^1},
\]
and hence 
\[
\| \tQ_\e \|_{H^1} \leq C  \| \rt_\e \|_{H^1}. 
\]
We observe directly from \eqref{Qeq} that 
\[
\| \tQ_\e \|_{H^k} \leq C  \left( \| \rt_\e + \frac{\rt_\e^2}{2 \logep} \|_{H^k} + \frac{1}{\logep} \| \tQ_\e \|_{H^k}^2 + \frac{1}{\logep^2} \| \tQ_\e \|_{H^k}^3  \right),
 \]
 which gives a uniform control on $\tQ_\e$ in $H^k$ via a boot-strapping argument.   Similarly, we can re-arrange \eqref{Qeq} to observe
 \begin{equation}
 \LC 1 - {\e^2 \over 2} \Delta \RC \LC \tQ_\e - \rt_\e \RC = {\e^2 \over 2} \Delta \rt_\e + {(\rt_\e - \tQ_\e)\over \logep} \mathcal{P}(\tQ_\e, \rt_\e)
 \end{equation}
 where 
 \begin{equation}
 \mathcal{P}(\tQ_\e, \rt_\e) :=  {\rt_\e\over 2} + {3 \tQ_\e \over 2} 
 +  {\LC \tQ_\e ( \rt_\e + \tQ_\e ) \RC \over 2 \logep} .
 \end{equation}
 Hence, applying the linear estimates, we easily observe
\[
\| \tQ_\e - \rt_\e \|_{H^{k-1}} \leq C\e^2 \| \rt_\e \|_{H^{k+1}},
\]
which converges to $0$ as $\e \to 0$ for $\rt_\e$ sufficiently regular.  Note, we have used here that $\nabla \rho_\e$ has compact support in order to integrate by parts.  To remove this condition otherwise would require further work controlling the error terms relating to boundary condition of $\rho_\e$ and potentially restrict us to local convergence estimates.

4.  To finish the \emph{a posteriori} convergence, we need to get to $H^k$ convergence. Recall  the equation
\[
(1 - \e^2 \Delta ) q = g \qquad \p_\nu q = 0 ,
\]
which we rewrite as 
\begin{equation} \label{e:bettereqn}
(q - g ) - \e^2 \Delta q = 0 \qquad \p_\nu q = 0.
\end{equation} 
We will replace $q$ with $\tQ_\e$ and $g$ by the right hand side of our elliptic equation.

For higher regularity, we have the formal calculation, 
\begin{align*}
\int \LV \partial^k (q- g) \RV^2 dx & = \e^2 \int \partial^k (q - g)  \partial^{k} \Delta q dx  \\
& =  - \e^2 \int \LV \nabla \partial^k q \RV^2 + {\e^2 \over 2} \LN \partial^k q \RN_{L^2}^2 + {\e^2 \over 2} \LN \partial^k  g \RN_{L^2}^2  \\
& \qquad + \e^2
\int_{\p \Omega} \p_\nu \partial^k q \LC \partial^k q  - \partial^k g \RC d s ,
\end{align*}
where the boundary terms can be controlled by careful use of the Neumann boundary condition and the equations.

To establish the result rigorously up to the boundary, we must repeat the argument as in Step $2$, but acting $\Delta^\gamma_h$ on both sides of \eqref{e:bettereqn} and then multiplying 
both sides by $ \Delta^\gamma_h(q - g)$ and integrating.   
Specifically, we can consider the coercive Dirichlet form
\[
D( \Delta^k_h \zeta (q- g), \Delta^k_h \zeta (q - g) ) + \e^2 D( \nabla_h \Delta^{k}_h \zeta q, \nabla_h \Delta^{k}_h \zeta q )
\]
both in neighborhoods of the boundary, as well as in the interior to get elliptic estimates as in Step $2$.

To see the uniqueness, let $\eta_j = 1 + {\tQ_j \over \logep}$, $j = \{1,2\}$ be two solutions to \eqref{eqn:etae} with Neumann boundary conditions for the same $p_\e = 1 + {\rt_\e\over \logep}$ with $\rt_\e \in H^k$ for $k > 2$.  Then set $w = \eta_1 - \eta_2$; $w$  solves 
\begin{align*}
\left\{ \begin{array}{rl} 
- \Delta w = {1\over \e^2} \LC w p^2_\e - \LC \eta_1^3 - \eta_2^3 \RC \RC  & \hbox{ in } \Omega \\
\p_\nu w = 0 & \hbox{ on } \p \Omega \end{array} \right. .
\end{align*}
Note that $\eta_1^3 - \eta_2^3 = w \LC \eta_1^2 + \eta_1 \eta_2  +\eta_2^2 \RC = w \LC 3 + {3 \over \logep} ( \tQ_1 + \tQ_2) + {1\over \logep^2} (Q_1^2 + Q_1 Q_2 + Q_2^2 ) \RC$.  Multiply by $w$ and integrate over $\Omega$, using the boundary condition:
\begin{align*}
\int \LV \nabla w \RV^2 + {2 \over \e^2} w^2 & = {1\over \logep \e^2} \int w^2 \LB 2 \rho_\e - 3 (Q_1 + Q_2) +{1\over \logep^2} \LC \rho_\e^2 - \tQ_1^2 - \tQ_1\tQ_2 - \tQ_2^2 \RC \RB \\
& \leq {C\over \logep} \LC 1 + \LN \rho_\e \RN_{H^k}^2 \RC \LC 1 + \LN \tQ_1  \RN_{H^k}^2 \RC\LC 1 + \LN \tQ_2 \RN_{H^k}^2 \RC {1\over \e^2} \int w^2 \\
& \leq  {1.5\over \e^2} \int w^2
\end{align*}
for $\e \leq \e_0$ depending on the norms of $\rt_\e, \tQ_j$, and so $\LN w \RN_{H^1} = 0$.

\end{proof}

\bibliographystyle{abbrv}

\bibliography{kmsbib}

\end{document}